\pgfplotsset{compat=1.7}
\newcommand{\ddt}{\frac{\textup{d}}{\textup{d}t}}
\newcommand{\dt}{\, \textup{d} t}
\newcommand{\ds}{\, \textup{d} s }
\newcommand{\dx}{\, \textup{d} x}
\newcommand{\sca}[2]{\langle #1, \; #2 \rangle}
\newcommand{\dxs}{\, \textup{d}x \textup{d}s}
\newcommand{\dxt}{\, \textup{d}x \textup{d}t}
\newcommand{\inttO}{\int_0^t \int_{\Omega}}
\newcommand{\intT}{\int_0^t}
\newcommand{\intO}{\int_{\Omega}}
\newcommand{\ulal}{\underline{\alpha}}
\newcommand{\olal}{\overline{\alpha}}
\newcommand{\R}{\mathbb{R}} 
\newcommand{\N}{\mathbb{N}} 
\newcounter{rownumber}
\DeclareMathOperator*{\esssup}{ess\,sup}
\newcommand{\nLtwo}[1]{\| #1 \|_{L^2}}
\newcommand{\nHone}[1]{\| #1 \|_{H^1}}
\newcommand{\nLinf}[1]{\| #1 \|_{L^\infty}}
\newcommand{\nLtwoLtwo}[1]{\| #1 \|_{L^2(L^2)}}
\newcommand{\nLtwotLtwo}[1]{\| #1 \|_{L^2(L^2)}}
\newcommand{\nLtwotLfour}[1]{\| #1 \|_{L^2(L^4)}}
\newcommand{\nLinfLtwo}[1]{\| #1 \|_{L^\infty(L^2)}}
\newtheorem{theorem}{Theorem}
\newtheorem{proposition}{Proposition}
\newtheorem*{assumption*}{Assumptions on the memory kernel}
\newtheorem{remark}{Remark}
\numberwithin{lemma}{section}
\numberwithin{proposition}{section}
\numberwithin{theorem}{section}
\numberwithin{equation}{section}
\newcommand{\leqnomode}{\tagsleft@true}
\newcommand{\reqnomode}{\tagsleft@false}
\numberwithin{equation}{section}
\numberwithin{thm}{section}
\newaliascnt{corollary}{thm}
\newtheorem{corollary}[corollary]{Corollary}
\title[Shape and topology optimization of acoustic waves]{A phase-field approach to shape and topology\\[1mm] optimization of acoustic waves\\[1mm] in dissipative media}
\subjclass[2010]{35L72, 49J20}
\keywords{shape and topology optimization, nonlinear acoustics, phase-field method, optimality conditions, $\Gamma$-convergence}
\author[H. Garcke]{Harald Garcke$^\dagger$}
\thanks{$^\dagger$Fakult\"at f\"ur Mathematik,
	Universit\"at Regensburg, 93040 Regensburg, Germany (\href{harald.garcke@mathematik.uni-regensburg.de}{harald.garcke@mathematik.uni-regensburg.de})}
\author[S. Mitra]{Sourav Mitra$^\ddag$}
\thanks{$^\ddag$Institute of Mathematics, University of W\"urzburg, 97074 W\"urzburg, Germany, (\href{sourav.mitra@mathematik.uni-wuerzburg.de}{sourav.mitra@mathematik.uni-wuerzburg.de})}
\author[V. Nikoli\'c]{Vanja Nikoli\'c$^\S$}
\thanks{$^\S$Department of Mathematics, 	Radboud University, 6525 AJ Nijmegen, The Netherlands
(\href{vanja.nikolic@ru.nl}{vanja.nikolic@ru.nl})}
\begin{document}
\maketitle

\begin{abstract}
We investigate the problem of finding the optimal shape and topology of a system of acoustic lenses in a dissipative medium. The sound propagation is governed by a general semilinear strongly damped wave equation. We introduce a phase-field formulation of this problem through diffuse interfaces between the lenses and the surrounding fluid. The resulting formulation is shown to be well-posed and we prove that the corresponding optimization problem   has a minimizer. By analyzing properties of the reduced objective functional and well-posedness of the adjoint problem, we rigorously derive first-order optimality conditions for this problem. Additionally, we consider the $\Gamma$-limit of the reduced objective functional and in this way establish a relation between the diffuse interface problem and a perimeter-regularized sharp interface shape optimization problem.
\end{abstract} 
\section{Introduction}
Optimization of acoustic wave propagation is of immediate interest in numerous medical and industrial applications. In medical uses of ultrasound, acoustic lenses focus sound waves in a way analogous to optical lenses at the part of the body being treated or examined~\cite{spadoni2010generation, yoshizawa2009high}. Their design directly influences the achieved acoustic pressure levels in the focal region and, in turn, the quality and safety of these procedures. In underwater imaging, a system of acoustic cameras creates images that can, unlike with optical lenses, still be obtained in low-visibility waters~\cite{tran2017shape, belcher2001object, belcher1999beamforming, mueller2006video}. They are used in, e.g., tracking the work of divers or remotely operated vehicles~\cite{belcher1999beamforming}.  Shape and topology optimization of their design is expected to lead to a better quality of images. \\
\indent In this work, we study the problem of optimizing the shape of a system of acoustic lenses in dissipative media, which will also allow for topological changes. Under sufficiently high frequencies (as in the above-mentioned ultrasonic applications) and/or intensities, the sound propagation is governed by nonlinear wave equations. We thus use a general semilinear strongly damped wave equation as a model of nonlinear acoustic propagation to match the desired acoustic pressure distribution in the focal region. The model in question can be understood as a semi-linearization of the classical Westervelt equation of nonlinear acoustics~\cite{westervelt1963parametric}. This semi-linearization allows us to obtain existence of solutions with a relatively low regularity in space; that is, with acoustic pressure in $L^\infty(0,T; H^1(\Omega))$.
The low space regularity is due to inhomogeneities in space and hence a higher space regularity cannot be expected. The low space regularity will give rise to several analytical difficulties. \\
\indent We propose and analyze a phase-field method with regularization by a Ginzburg-Landau energy, where the phase-field function models the transition between the fluid and lenses. The problem is then formulated as an optimal control problem with control in the medium parameters, that is, the speed of sound, sound diffusivity, and the nonlinearity coefficient. We prove that this diffuse interface formulation is well-posed and that the optimization problem  admits a minimizer. We also analyze the differentiability of the reduced objective functional and well-posedness of the adjoint problem, thanks to which we can derive first-order optimality conditions for this problem. Additionally, we are able to relate the diffuse interface problem to the perimeter-regularized sharp interface shape optimization problem  through the $\Gamma$-limit of the reduced objective functional.\\
\indent To the best of our knowledge, a phase-field approach has not been considered before in a nonlinear acoustic setting. For linear time-harmonic propagation, a numerical algorithm based on a phase-field method has been investigated in~\cite{tran2017shape} relying on formal calculations. The sharp interface problem has been analyzed in~\cite{nikolic2017sensitivity} subject to the Westervelt equation with strong nonlinear damping, with restrictions in terms of the space of admissible shapes. A numerical algorithm with sound propagation modeled by the Westervelt equation and in part formal computations has been developed in~\cite{muhr2017isogeometric}.    \\
\indent We structure our exposition as follows. In Section~\ref{Sec:ProblemSetting}, we discuss the modeling aspects of sound propagation through fluid with acoustic lenses and introduce a phase-field formulation of the related shape optimization problem. Section~\ref{Sec:AnalysisState} is dedicated to the well-posedness analysis of the state problem. In Section~\ref{Sec:OptimalControlFormulation}, we analyze the control-to-state operator and prove its Fr\'echet differentiability. Section~ \ref{Sec:ExistenceMinimizer} deals with the existence of a minimizer. In Section~\ref{Sec:Adjoint} we analyze the adjoint problem, which, together with the previous results, enables us to derive the first-order optimality conditions in Section~\ref{Sec:FirstOrderOpt}. Finally, in Section~\ref{Sec:SharpInterface} we study the $\Gamma$-convergence of the reduced objective functional, which allows us to relate the phase-field problem to the sharp interface shape optimization problem with a perimeter regularization. 
\section{Problem setting and a phase-field approach} \label{Sec:ProblemSetting}
We consider an acoustic lens system in a thermoviscous fluid. A number of acoustic lenses $\Omega_{l,1}, \ldots, \Omega_{l, n}$ of the same material are immersed in an acoustic fluid $\Omega_f$, $n \in \N$; see Figure~\ref{Fig:SharpInterface}. The material parameters corresponding to the lens are given by $(c_l, b_l, k_l)$ and to the fluid by $(c_f, b_f, k_f)$. Here $c_i>0$ denotes the speed of sound, $b_i>0$ the so-called sound diffusivity, and $k_i \in \R$ is the nonlinearity coefficient, where $i \in \{l, f\}$. Typical values of the speed of sound and sound diffusivity in different media can be found in, e.g.,~\cite{dunn2015springer, hamilton1998nonlinear}. \\
\indent The goal is to determine the number and shape of acoustic lenses such that we reach the desired pressure distribution in some region of interest $D \subset \Omega$, where $\Omega \subset \R^d$, $d \in \{2,3\}$, is a hold-all domain, assumed to be Lipschitz regular. Let $T>0$ denote the final time of propagation.  Assuming that we have a high-intensity or high-frequency sound source, the propagation of sound waves is nonlinear. We can obtain the pressure field $u$ by solving 
\begin{equation}  \label{Westervelt_eq_simplified}
\begin{aligned}
\alpha(x, t)u_{tt}-\textup{div}(c^2\nabla u)- \textup{div}(b \nabla u_t)= f(u_t) \quad \text{on } \ \Omega \times (0,T)
\end{aligned}
\end{equation}
with the right-hand side nonlinearity given by
\[
f(u_t)=2k u_t^2.
\]  
The medium parameters are piecewise constant functions, defined as
\begin{equation} \label{assumption_coefficients}
\begin{aligned}
    c=&\, c_l \chi_{\Omega_l}+c_f(1-\chi_{\Omega_l}),\\
    b=& \, b_l \chi_{\Omega_l}+b_f(1-\chi_{\Omega_l}),\\ 
    k=& \, k_l \chi_{\Omega_l}+k_f(1-\chi_{\Omega_l}), 
\end{aligned}    
\end{equation}
with $\Omega_l=\displaystyle \bigcup_{j=1}^n  \, \Omega_{l, j}$. We assume that the coefficient $\alpha$ does not degenerate, that is, we assume that there exist $\ulal$, $\olal >0$, such that
\begin{equation} \label{non-degeneracy}
\ulal \leqslant \alpha(x,t) \leqslant \olal \qquad \text{a.e.\ in } \Omega \times (0,T).
\end{equation}
The sound waves are excited via boundary in form of Neumann boundary conditions
\begin{equation}
\begin{aligned}
c^2\frac{\partial u}{\partial n}+b \frac{\partial u_t}{\partial n}=g \ \ \text{on} \ \ \Gamma =\partial \Omega
\end{aligned}
\end{equation}
where $n$ denotes the unit outward normal to $\Gamma$ and the problem is additionally supplemented with the initial conditions
\begin{equation}
(u,u_t)\vert_{t=0}=(u_0,u_1).
\end{equation}
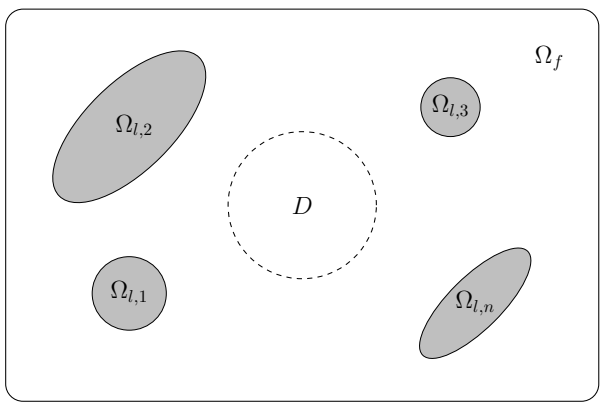
\begin{figure}[h!]
	\centering
	\scalebox{0.65}{\begin{tikzpicture}
\draw[black, fill = white, fill opacity = 0.5, semithick, even odd rule,rounded corners=10]
            (0,0) rectangle (12,8);
\draw[black, fill = white, dashed,  fill opacity = 0.5, semithick] 
(6, 4) circle (1.5);
\node[circle, draw, fill=gray, fill opacity = 0.5,  minimum size = 1.2cm] at (9, 6) {};
\node[circle, draw, fill=gray, fill opacity = 0.5,  minimum size = 1.5cm] at (2.5, 2.2) {};
\node at (11, 7) {\Large $\Omega_f$};
\node at (6, 4) {\Large $D$};
\node[ellipse, draw,fill=gray, fill opacity = 0.5, text height = 1cm, minimum width = 4cm, rotate=45] at (2.5, 5.6) {};
\node[ellipse, draw,fill=gray, fill opacity = 0.5, text height = 0.5cm, minimum width = 3cm, rotate=45] at (9.5, 2) {};
\node at (2.6, 5.6) {\Large $\Omega_{l,2}$};
\node at (2.5, 2.2) {\Large $\Omega_{l,1}$};
\node at (9, 6) {\Large $\Omega_{l, 3 }$};
\node at (9.5, 2) {\Large $\Omega_{l, n}$};
\end{tikzpicture}}
	\caption{The acoustic lens system.} \label{Fig:SharpInterface}
\end{figure}  
~\\
Equation \eqref{Westervelt_eq_simplified} can be understood as a semi-linearization of the Westervelt equation of nonlinear acoustics obtained by freezing the term $\alpha(u)=1-2ku$. We refer to~\cite{westervelt1963parametric} for its derivation in lossless form and, for example, \cite{kaltenbacher2009global, kaltenbacher2011well} for its analysis in homogeneous media. This semi-linearization admits solutions $u \in L^\infty(0,T; H^1(\Omega))$ for sufficiently small data, as we will show. However, due to the heterogeneous medium setting in the present work, a higher space regularity cannot be expected, which will lead to several analytical difficulties in the analysis. 
\subsection{A phase-field approach}
Similarly to~\cite{tran2017shape}, where a numerical phase-field approach for shape optimization of acoustics lenses has been investigated under the assumption of time-harmonic wave propagation, we introduce a continuous material representation between lenses and fluid by employing diffuse interfaces $\xi_i$, $i \in [1,n]$, with thickness proportional to $\varepsilon>0$. We next define a partition 
$$\Omega=\Omega_{f}\cup\overline{\xi}\cup\Omega_{l}$$
of $\Omega,$ where $\xi=\bigcup_{i=1}^n \xi_i$
and then introduce a phase-field function $\varphi$, such that
\begin{equation}
    \begin{aligned}
    &\varphi(x)=1 \ \ \text{for } x \in \Omega_f, \\
    0\leqslant &\, \varphi(x)\leqslant 1  \ \ \text{for } x \in \overline{\xi},\\
    &\varphi(x)= 0 \ \ \text{for } x \in \Omega_l;
    \end{aligned}
\end{equation}
see Figure~\ref{Fig:PhaseFieldInterface}.
In the phase-field setting the fluid region $\Omega_f$  and  the lens region $\Omega_l$  are hence  separated by a diffuse interface. On the diffuse interface, the material properties are interpolated with respect to the phase-field function as follows:
\begin{equation} \label{interpolated_coefficients}
    \begin{aligned}
        c^2=&\, c^2_l +\varphi(x)(c^2_f-c^2_l),\\
        b=&\, b_l +\varphi(x)(b_f-b_l),\\
        k=&\, k_l +\varphi(x)(k_f-k_l),\\        
    \end{aligned}
\end{equation}
where we assume $c_l < c_f$, $b_l < b_f$, and $k_l < k_f$. 
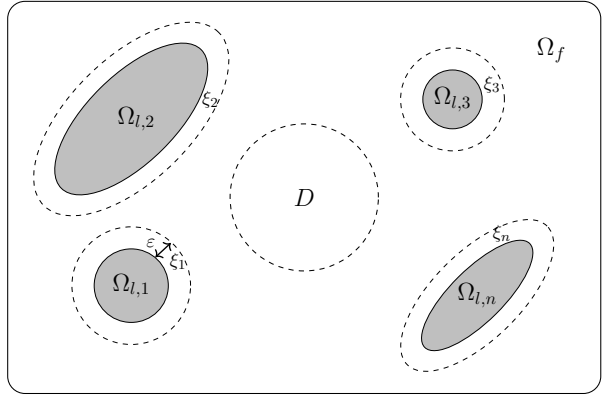
\begin{figure}[h!]
	\centering
	\scalebox{0.65}{\begin{tikzpicture}
\draw[black, fill = white, fill opacity = 0.5, semithick, even odd rule,rounded corners=10]
(0,0) rectangle (12,8);
\draw[black, fill = white, dashed,  fill opacity = 0.5, semithick, even odd rule] 
(6, 4) circle (1.5);
\node at (11, 7) {\Large $\Omega_f$};
\node at (6, 4) {\Large $D$};
\node[ellipse, draw,fill=white,  dashed, text height = 1.5cm, minimum width = 5cm, rotate=45] at (2.5, 5.6) {};
\node[ellipse, draw,fill=gray, fill opacity = 0.5, text height = 1cm, minimum width = 4cm, rotate=45] at (2.5, 5.6) {};

\node[ellipse, draw,fill=white, dashed, text height = 1cm, minimum width = 4cm, rotate=45] at (9.5, 2) {};
\node[ellipse, draw,fill=gray, fill opacity = 0.5, text height = 0.5cm, minimum width = 3cm, rotate=45] at (9.5, 2) {};

\node[circle, draw, fill=white, dashed,  minimum size = 2.1cm] at (9, 6) {};
\node[circle, draw, fill=gray, fill opacity = 0.5,  minimum size = 1.2cm] at (9, 6) {};

\node[circle, draw, fill=white, dashed,  minimum size = 2.4cm] at (2.5, 2.2) {};
\node[circle, draw, fill=gray, fill opacity = 0.5,  minimum size = 1.5cm] at (2.5, 2.2) {};
\draw[<->, thick] (2.99,2.78)--(3.3, 3.07);
\node at (2.9, 3.065) {$\varepsilon$};
\node at (2.6, 5.6) {\Large $\Omega_{l,2}$};
\node at (2.5, 2.2) {\Large $\Omega_{l,1}$};
\node at (9, 6) {\Large $\Omega_{l, 3}$};
\node at (9.5, 2) {\Large $\Omega_{l, n}$};
\node at (3.45, 2.7) {$\xi_{1}$};
\node at (10, 3.27) {$\xi_{n}$};
\node at (9.8, 6.3) {$\xi_{3}$};
\node at (4.1,6) {$\xi_{2}$};
\end{tikzpicture}}
	\caption{The acoustic lens system with a phase-field interface.}  \label{Fig:PhaseFieldInterface}
\end{figure} 
~\\
\noindent To formulate the optimization problem, we employ a tracking-type objective functional for a given desired pressure $u_{\textup{d}} \in L^2(0,T; L^2(\Omega))$. Following, for example,~\cite{ambrosio1993optimal, petersson1999some}, we use a perimeter penalization to overcome ill-posedness of the sharp interface problem. We then approximate it in the diffuse interface setting by a multiple of the Ginzburg--Landau energy $E_\varepsilon$:
\begin{equation} \label{GL_energy}
\begin{aligned}
E_\varepsilon(\varphi)=\begin{cases}  \displaystyle \int_{\Omega} \dfrac{\varepsilon}{2}|\nabla \varphi|^2+\dfrac{1}{\varepsilon}\Psi(\varphi) \textup{d}x,\quad  \quad  &\text{if } \varphi \in H^1(\Omega), \\[2mm]
+ \infty, \ &\text{otherwise}; \end{cases} 
\end{aligned}
\end{equation}
cf.~\cite{garcke2016shape, blank2014relating, blank2016sharp}. Here $\Psi$ is a double obstacle potential given by
\begin{equation}
\begin{aligned}
\Psi(\varphi) = \begin{cases} \Psi_0(\varphi) \qquad &\text{if } \ 0 \leqslant \varphi \leqslant 1, \\ + \infty, \ &\text{otherwise}, \end{cases}
\end{aligned}	
\end{equation}
with 
\[
\Psi_0(\varphi)= \frac{1}{2}\varphi(1-\varphi).
\]
The shape optimization problem then has the following phase-field formulation: 
\begin{equation} \label{objective}
\begin{aligned}
\min_{(u, \varphi)} J^\varepsilon(u, \varphi)= \dfrac12 \int_0^T \int_D (u-u_\textup{d})^2 \, \textup{d}x \textup{d}s+\gamma E_\varepsilon(\varphi)
\end{aligned}
\end{equation}
where $\gamma>0$ is a weighting parameter, with 
\begin{equation} \label{admissible_sets}
\varphi \in \Phi_{\textup{ad}}=\{\varphi \in H^1(\Omega) \cap L^\infty(\Omega): \, 0 \leqslant \varphi \leqslant 1 \ \text{a.e. in} \ \Omega\},\quad u \in U,
\end{equation}
where the admissible space $U$ is defined in \eqref{defU}, such that
\begin{equation} \label{state_constraint_strong}
\begin{aligned}
\begin{cases}
\alpha(x,t) u_{tt}-\textup{div}(c^2(\varphi) \nabla u)- \textup{div}(b(\varphi) \nabla u_t)= 2k(\varphi)u_t^2, \\[2mm]
c^2(\varphi)\dfrac{\partial u}{\partial n}+b(\varphi) \dfrac{\partial u_t}{\partial n}=g\ \ \text{on} \ \ \Gamma,\\[2mm]
(u, u_t)\vert_{t=0}=(0, 0),
\end{cases}
\end{aligned}
\end{equation}
is satisfied (in a weak sense) and the medium parameters satisfy \eqref{interpolated_coefficients}.\\
\indent The function $\varphi \in \Phi_{\textup{ad}}$ is thus the design variable with $\{x \in \Omega: \varphi(x)=1\}$ modeling the fluid region and $\{x \in \Omega: \varphi(x)=0\}$ the lenses. In what follows, we proceed to analyze this problem.
\begin{remark}
Additional constraints could be incorporated in $\Phi_{\textup{ad}}$ without affecting the upcoming analysis. For instance, it might be sensible to impose that the focal region $D$ could only be filled with fluid. In that case, we can modify the definition of  $\Phi_{\textup{ad}}$ to involve the condition \[\varphi=1 \quad \text{in} \  D.\] We might also wish to impose a volume constraint on the fluid region (that is, on $\displaystyle \int_{\Omega} \varphi \dx$) to avoid the limiting cases of the whole domain $\Omega$ being filled with fluid or occupied by a lens.
\end{remark}
\begin{remark}
In applications it is often desirable to limit the number of lenses. The parameter	 $\gamma$ which weights the interfacial energy  can be used to penalize the number of holes in the fluid. Of course a large $\gamma$ makes it energetically more costly to have many interfaces and this leads to the fact that the number of lenses will decrease for large $\gamma$. This has been used in shape and topology optimization problems for elastic structures, see e.g.\ Rupprecht \cite{rupprecht}, Figures 37 and 38.

It is more subtle to restrict the number of lenses to a fixed quantity, lets say one or two. In this case the objective functional needs to include topological information. In this context possibly  the work of Du, Liu and Wang  \cite{du2005retrieving}
 can be used.
\end{remark}
\begin{remark}
	In view of relevant ultrasound applications, it could also be interesting to involve optimal control of the boundary excitation $g$ into the above optimization problem. This would, however, require a non-trivial extension of the current analytical framework with possibly incorporating ideas from~\cite{clason2009boundary}, where boundary control problems for nonlinear acoustic models in homogeneous media have been considered.
\end{remark}
\section{Analysis of the state problem} \label{Sec:AnalysisState}
In this section, we consider the state problem \eqref{state_constraint_strong}. To formulate the results, we introduce the solution space
\begin{equation}\label{defU}
\begin{aligned}
U = \{u \in L^\infty(0,T; H^1(\Omega)): \ u_t \in  L^\infty(0,T; H^1(\Omega)), \  u_{tt} \in L^2(0,T; L^2(\Omega))\}
\end{aligned}
\end{equation}
with the corresponding norm
\begin{equation} \label{norm_U}
\begin{aligned}
\|u\|_{U}=&\, \begin{multlined}[t] \left\{\sup_{t \in (0,T)}\nHone{u(t)}^2 + \esssup_{t \in (0,T)}\nHone{u_t(t)}^2+\int_0^T\nLtwo{u_{tt}(s)}^2 \ds\right\}^{1/2}.\end{multlined}
\end{aligned}
\end{equation}
\indent In what follows, we will frequently abbreviate the norms on $L^p(\Omega)$ for $p \in [1, \infty]$, $H^1(\Omega)$, and in spaces on the boundary by omitting the spatial domain. From the context, it will be clear whether the spatial domain is $\Omega$ or $\Gamma$. \\ 
\indent We will first prove an existence result for a linearization of \eqref{state_constraint_strong}, obtained by replacing the right-hand side of the equation with
\[
2k(\varphi) \beta(x,t) u_t+ f(x,t).
\]
We claim that under suitable regularity and smallness assumptions on the variable coefficients $\alpha$ and $\beta$, such a problem has a unique weak solution. Note that in the analysis of the state and adjoint problems below we allow the values of $\varphi$ to exceed the interval $[0,1]$. To make this possible, we extend the definition of the coefficients \eqref{interpolated_coefficients} here as follows:
\begin{equation} \label{interpolated_coefficients_extended}
\begin{aligned}
c^2=\,\begin{cases} c^2_l +\varphi(x)(c^2_f-c^2_l), \ & \varphi \in [0,1], \\
c^2_l, \ & \varphi<0, \\
c^2_f, \ & \varphi>1,
\end{cases}        
\end{aligned}
\end{equation}
and redefine analogously the sound diffusivity $b$ and coefficient $k$. \\
\indent On account of the compact embedding $H^1(\Omega) \hookrightarrow \hookrightarrow L^4(\Omega)$ and Ehrling's lemma \cite[Theorem 7.30]{renardy2006introduction}, we have
\begin{equation}\label{interpolationL4}
\begin{array}{l}
\|u\|_{L^{4}(\Omega)}\leqslant C(\epsilon)\|u\|_{L^{2}(\Omega)}+\epsilon\|\nabla u\|_{L^{2}(\Omega)}, \quad u \in H^1(\Omega),
\end{array}
\end{equation}
for any $\epsilon>0$, which we will frequently exploit in our energy arguments.
\begin{proposition} \label{Prop:LinWave}
	Given $T>0$, let $f \in L^2(0,T; L^2(\Omega))$, $g \in H^1(0,T; H^{-1/2}(\Gamma))$,  and 
	\[\alpha \in W^{1, \infty}(0,T; L^2(\Omega)) \cap L^\infty(0,T; L^\infty(\Omega)), \qquad \beta \in L^\infty(0,T; L^2(\Omega)). \]
	Further,  assume that the coefficient $\alpha$ satisfies the non-degeneracy condition \eqref{non-degeneracy} and that
	\begin{equation}
	(u_0, u_1) \in H^1(\Omega) \times H^1(\Omega).
	\end{equation}
	Then for every $\varphi \in L^1(\Omega)$,  there is a unique $u \in U$, which solves
	\begin{equation}\label{weakformorprob}
	\begin{aligned}
	\begin{multlined}[t]
	\intO	\alpha(t) u_{tt}(t) v \dx+\intO (c^2(\varphi) \nabla u(t)+b(\varphi)  \nabla u_t(t)) \cdot \nabla v \dx \\
	= \intO 2k(\varphi) \beta(t) u_t(t) v\dx+\intO f(t) v \dx+ \sca{g(t)}{v}_{\Gamma} \quad \text{a.e.\ in } (0,T),
	\end{multlined}
	\end{aligned}
	\end{equation}
	for all $v \in H^1(\Omega)$, with $(u, u_t)\vert_{t=0}=(u_0, u_1)$.	Furthermore, this solution satisfies the following energy estimate:
	\begin{equation} \label{energy_est_linear}
	\begin{aligned}
	\|u\|^2_{U} 
	\leqslant \,C_1 \exp(C_2T)(\|u_0\|_{H^1}^2+\|u_1\|^2_{H^1}+\nLtwoLtwo{f}^2+\|g\|_{H^1(H^{-1/2})}^2),
	\end{aligned}
	\end{equation}
	where $C_2=C_3(1+\|\alpha_t\|_{L^\infty(L^2)}+\|\beta\|_{L^\infty(L^2)}+\|\beta\|_{L^\infty(L^2)}^2)$.
\end{proposition}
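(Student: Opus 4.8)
The plan is to build the solution by a Galerkin approximation and to secure it in $U$ through two successive energy estimates, the coercivity being supplied by the non-degeneracy \eqref{non-degeneracy} of $\alpha$ together with the lower bounds $c^2(\varphi)\geqslant c_l^2>0$ and $b(\varphi)\geqslant b_l>0$ coming from \eqref{interpolated_coefficients_extended}. Concretely, I would fix a Galerkin basis $\{w_j\}$ of $H^1(\Omega)$ (for instance the Neumann eigenfunctions of $-\Delta+\mathrm{id}$), project \eqref{weakformorprob} onto the span of the first $N$ of them, and solve the resulting second-order linear ODE system. Its mass matrix $M_{ij}(t)=\intO\alpha(t)w_iw_j\dx$ is uniformly positive definite by \eqref{non-degeneracy} and lies in $W^{1,\infty}(0,T)$ since $\alpha\in W^{1,\infty}(0,T;L^2(\Omega))$; hence $M(t)^{-1}$ exists and is Lipschitz, and Carath\'eodory's theorem yields an absolutely continuous Galerkin solution $u^N$ on all of $[0,T]$.

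\emph{First energy estimate.} I would test the Galerkin system with $u^N_t$. The acceleration term gives $\tfrac12\ddt\intO\alpha (u_t^N)^2\dx-\tfrac12\intO\alpha_t(u_t^N)^2\dx$, the stiffness term gives $\tfrac12\ddt\intO c^2(\varphi)|\nabla u^N|^2\dx$ because $c^2(\varphi)$ is time-independent, and the damping term yields the coercive contribution $\intO b(\varphi)|\nabla u_t^N|^2\dx\geqslant b_l\nLtwo{\nabla u_t^N}^2$. The two dangerous right-hand side contributions $\tfrac12\intO\alpha_t(u_t^N)^2\dx$ and $\intO 2k(\varphi)\beta (u_t^N)^2\dx$ are bounded by $C\|\alpha_t\|_{L^2}\|u_t^N\|_{L^4}^2$ and $C\|\beta\|_{L^2}\|u_t^N\|_{L^4}^2$; applying \eqref{interpolationL4} with $\epsilon$ proportional to $\|\beta\|_{L^2}^{-1/2}$ absorbs the $\nLtwo{\nabla u_t^N}^2$ part into $b_l\nLtwo{\nabla u_t^N}^2$ at the cost of a factor $\sim\|\beta\|_{L^2}^2$ in front of $\nLtwo{u_t^N}^2$, which is exactly the origin of the $\|\alpha_t\|_{L^\infty(L^2)}$, $\|\beta\|_{L^\infty(L^2)}$ and $\|\beta\|_{L^\infty(L^2)}^2$ terms in $C_2$. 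The boundary term is integrated by parts in time, $\int_0^t\sca{g}{u_t^N}_\Gamma\ds=\sca{g(t)}{u^N(t)}_\Gamma-\sca{g(0)}{u_0}_\Gamma-\int_0^t\sca{g_t}{u^N}_\Gamma\ds$, which is a first place where $g\in H^1(0,T;H^{-1/2}(\Gamma))$ is needed; the trace theorem and Young's inequality then move a small multiple of $\nHone{u^N(t)}^2$ onto the left. Gronwall's inequality controls $\sup_t\nHone{u^N(t)}^2+\sup_t\nLtwo{u_t^N(t)}^2+\int_0^T\nLtwo{\nabla u_t^N}^2\dt$ by the data, uniformly in $N$.

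\emph{Second energy estimate and the main obstacle.} To reach $U$ I would next test with $u^N_{tt}$. Here $\intO\alpha (u_{tt}^N)^2\dx\geqslant\ulal\nLtwo{u_{tt}^N}^2$ gives the $L^2$-control of the acceleration, the damping term becomes $\tfrac12\ddt\intO b(\varphi)|\nabla u_t^N|^2\dx$ and hence delivers $u_t^N\in L^\infty(0,T;H^1)$, and the stiffness term is integrated by parts in time, $\int_0^t\intO c^2(\varphi)\nabla u^N\cdot\nabla u_{tt}^N=\intO c^2(\varphi)\nabla u^N(t)\cdot\nabla u_t^N(t)-\intO c^2(\varphi)\nabla u_0\cdot\nabla u_1-\int_0^t\intO c^2(\varphi)|\nabla u_t^N|^2$, all of whose terms are controlled by the first estimate after a Young absorption of $\nLtwo{\nabla u_t^N(t)}^2$ into $\tfrac12\intO b(\varphi)|\nabla u_t^N(t)|^2$. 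The boundary term again calls for integration by parts in time, producing $\sca{g(t)}{u_t^N(t)}_\Gamma-\int_0^t\sca{g_t}{u_t^N}_\Gamma\ds$, which is precisely why the full $H^1(0,T;H^{-1/2})$-regularity of $g$ is assumed. The genuine obstacle is the coupling term $\int_0^t\intO 2k(\varphi)\beta\, u_t^N u_{tt}^N$: because $\beta$ only lies in $L^\infty(0,T;L^2(\Omega))$, the factor $\beta u_t^N$ cannot be bounded in $L^2(L^2)$ by $H^1$-regularity of $u_t^N$ alone, so a naive Young split against $\ulal\nLtwo{u_{tt}^N}^2$ does not close. I would handle it by exploiting that the first estimate already furnishes $u_t^N\in L^2(0,T;H^1)$ and combining this with \eqref{interpolationL4}, keeping the surviving $\nLtwo{\nabla u_t^N}^2$ contributions small under the stated smallness assumption on the coefficients so that they are absorbed simultaneously into $\ulal\nLtwo{u_{tt}^N}^2$ and the damping term; a Gronwall argument in the combined first- and second-order energy then yields \eqref{energy_est_linear}.

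\emph{Passage to the limit and uniqueness.} The uniform bounds give, along a subsequence, $u^N\overset{*}{\rightharpoonup}u$ in $L^\infty(0,T;H^1)$, $u_t^N\overset{*}{\rightharpoonup}u_t$ in $L^\infty(0,T;H^1)$ and $u_{tt}^N\rightharpoonup u_{tt}$ in $L^2(0,T;L^2)$. Every term in \eqref{weakformorprob} is linear and passes to the limit under weak and weak-$*$ convergence; the only one needing care is $\intO 2k(\varphi)\beta u_t v$, for which I would invoke the Aubin--Lions lemma: since $u_t^N$ is bounded in $L^2(0,T;H^1)$ with $u_{tt}^N$ bounded in $L^2(0,T;L^2)$, it converges strongly in $L^2(0,T;L^4(\Omega))$, and this suffices because $\beta v\in L^2(0,T;L^{4/3}(\Omega))$ for fixed $v\in H^1(\Omega)$. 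The initial data are recovered from the embeddings of $U$ into $C([0,T];H^1)$ and $C^1([0,T];L^2)$ together with $u^N(0)\to u_0$ and $u_t^N(0)\to u_1$. Uniqueness is immediate from linearity: the difference of two solutions solves \eqref{weakformorprob} with $f=0$, $g=0$ and zero initial data, whence the first energy estimate forces $\|u\|_U=0$.
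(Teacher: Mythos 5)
Your proposal follows the same architecture as the paper's proof: a Faedo--Galerkin approximation, a first energy estimate from testing with $u^N_t$, a second from testing with $u^N_{tt}$ in which the stiffness and boundary terms are integrated by parts in time (this is indeed where $g\in H^1(0,T;H^{-1/2}(\Gamma))$ is needed), then Gr\"onwall, weak limits, and uniqueness by linearity. Your first estimate, limit passage, and uniqueness argument are sound and essentially identical to the paper's (the paper only presents the a priori bounds; your Aubin--Lions step is fine, though for this \emph{linear} term weak convergence of $u^N_t$ in $L^2(0,T;L^4(\Omega))$ against the fixed function $k(\varphi)\beta v\in L^2(0,T;L^{4/3}(\Omega))$ would already suffice).

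The gap is in the second estimate, in your treatment of $\int_0^t\intO 2k(\varphi)\beta\,u^N_t u^N_{tt}\dxs$. You close it ``under the stated smallness assumption on the coefficients,'' but Proposition~\ref{Prop:LinWave} makes no smallness assumption on $\alpha$ or $\beta$: the conclusion is claimed for \emph{every} $\beta\in L^\infty(0,T;L^2(\Omega))$, with the size of $\beta$ entering only through the constant $C_2$. This matters downstream: in the proof of Theorem~\ref{Thm:Wellposedness} the proposition is applied with $\beta=w_t$ and $\nLinfLtwo{\beta}\leqslant CM$, where $M$ is the radius of the fixed-point ball and is \emph{not} small --- smallness there is imposed on the data $\delta$, not on the coefficients. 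A version of the proposition valid only for small $\beta$ would therefore not support the paper's fixed-point argument. The paper closes this step without smallness (cf.\ \eqref{comput1}): it bounds the term by $2k_f\nLinfLtwo{\beta}\|u_t\|_{L^2(L^4)}\nLtwotLtwo{u_{tt}}$, applies Young's inequality with $\epsilon$ small relative to $\ulal$ alone, and lets the surviving factor $C\epsilon^{-1}k_f^2\nLinfLtwo{\beta}^2$ multiply $\|u_t\|^2_{L^2(L^2)}+\|\nabla u_t\|^2_{L^2(L^2)}$, which then enter Gr\"onwall together with the first estimate; the $\|\beta\|^2$-dependence lands exactly in $C_2$, as the statement indicates. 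Note also that the integrability tension you correctly flag --- $\beta u^N_t$ lies only in $L^{4/3}(\Omega)$ in space, so it cannot be paired with $u^N_{tt}\in L^2(\Omega)$ by Cauchy--Schwarz --- is not cured by your remedy: Ehrling's inequality \eqref{interpolationL4} only refines $\|u^N_t\|_{L^4}$, whereas the pairing $\nLtwo{\beta}\|u^N_t\|_{L^4}\nLtwo{u^N_{tt}}$ (which your interpolation step tacitly presupposes, and which the paper uses) has H\"older exponents summing to $5/4$ and is not a valid bound as such; it becomes legitimate under extra spatial integrability of $\beta$, e.g.\ $\beta\in L^\infty(0,T;L^4(\Omega))$, which is what actually holds in the later application since there $\beta=w_t\in L^\infty(0,T;H^1(\Omega))$. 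So the honest repair at this point is a strengthened integrability hypothesis on $\beta$, not a smallness hypothesis; as written, your argument both relies on an assumption the statement does not contain and leaves the identified obstacle unresolved.
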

\begin{proof}
	The proof can be rigorously conducted by a standard Faedo--Galerkin procedure; see, for example~\cite[Ch.\ XVIII, \S 5]{dautray2012mathematical} and~\cite[Ch.\ 7]{evans1998partial}. We present here only the derivation of the a priori bound. For simplicity, we denote the semi-discrete solution again by $u$. We comment further that we are going to use a generic positive constant $C$ for the rest of the proof and this constant might vary from line to line and can depend on $|\Omega|$, $c_{i},$ $b_{i}$, and $k_{i}$ for $i\in\{k,l\}$, as well as $\underline{\alpha}$ and $\overline{\alpha}.$   \\
	\indent Testing the discretized-in-space problem with $v={u_{t}(t)}$ and integrating in space yields the identity
	\begin{equation} \label{identity_linwellp}
	\begin{aligned}
	&\begin{multlined}[t]\frac12\frac{\textup{d}}{\textup{d}t}\|\sqrt{\alpha(t)}u_t(t)\|^2_{L^2(\Omega)}+\frac12\frac{\textup{d}}{\textup{d}t}\|c(\varphi)\nabla u(t)\|^2_{L^2(\Omega)}+\|\sqrt{b(\varphi)}\nabla u_t(t)\|^2_{L^2} \end{multlined}\\[1mm]
	=&\, (\frac12\alpha_t(t) u_t(t)+2k(\varphi) \beta(t) u_t(t)+f(t), u_{t}(t))_{L^2(\Omega)}+\sca{g(t)}{u_t(t)}_{\Gamma}
	\end{aligned}
	\end{equation}
	a.e.\ in time. We next integrate over $(0,t)$ and rely on the estimate
	\begin{equation}
	\begin{aligned}
	&\int_0^t \left| (\frac12\alpha_t(t) u_t(t)+2k(\varphi) \beta(t) u_t(t)+f(t), u_{t}(t))_{L^2(\Omega)}+\sca{g(t)}{u_t(t)}_{\Gamma}\right|\ds \\
	\leqslant&\, \begin{multlined}[t]\bigg(\frac12\nLinfLtwo{\alpha_t}+2\nLinf{k}\nLinfLtwo{\beta}\bigg)\nLtwotLfour{u_t}^2+ \nLtwoLtwo{f}\nLtwotLtwo{u_t}\\
	+C\|g\|_{L^2(H^{-1/2})}\|u_t\|_{L^2(H^1)}.\end{multlined}
	\end{aligned}
	\end{equation}
	By employing the compact embedding $H^1(\Omega) \hookrightarrow L^4(\Omega),$ Ehrling's lemma, and Young's inequality, from \eqref{identity_linwellp} we further obtain
	\begin{equation} \label{intermediate_est_linearization}
	\begin{aligned}
	&\begin{multlined}[t]\frac{\ulal}{2}\|u_t(t)\|^2_{L^2(\Omega)} \Big \vert_0^t+\frac{c_l^2}{2}\|\nabla u(t)\|^2_{L^2(\Omega)}\Big \vert_0^t+b_l\|\nabla u_t\|^2_{L^2(L^2)} \end{multlined}\\[1mm]
	\leqslant&\, \begin{multlined}[t]\epsilon\bigg(\frac12\nLinfLtwo{\alpha_t}+2k_f \nLinfLtwo{\beta}+1\bigg)\|\nabla u_t\|^2_{L^2(L^{2})}+ C\nLtwoLtwo{f}^2\\+C\left(\frac12\nLinfLtwo{\alpha_t}+2\nLinf{k}\nLinfLtwo{\beta}+1\right)\nLtwotLtwo{u_t}^2
\\	+C\|g\|_{L^2(H^{-1/2})}^2.\end{multlined}
	\end{aligned}
	\end{equation}
	Fixing a suitably small value of the parameter $\epsilon$ and using Gr\"onwall's lemma one furnishes the following from \eqref{intermediate_est_linearization}:
	\begin{equation}\label{afterGronweak}
	\begin{array}{ll}
	&\displaystyle\sup_{t \in (0,T)}\|u_{t}(t)\|^{2}_{L^{2}(\Omega)}+\sup_{t \in (0,T)}\|\nabla u(t)\|^{2}_{L^{2}(\Omega)}\\
	&\displaystyle\leqslant C\exp(C_{4}T)\left(\|(u_{0},u_{1})\|^{2}_{H^{1}\times L^{2}}+\nLtwoLtwo{f}^2+\|g\|_{L^2(H^{-1/2})}^2\right),
	\end{array}
	\end{equation}
	where $$C_{4}=C_{5}\left(1+\|\alpha_{t}\|_{L^{\infty}(L^{2})}+\|\beta\|_{L^{\infty}(L^{2})}\right),$$
	for some positive constant $C_{5}.$ Note that for $t\in(0,T),$ we can obtain a bound on $\nLtwo{u(t)},$ simply by employing 
	\begin{equation}\label{Linfest}
	\begin{array}{ll}
	\displaystyle\nLtwo{u(t)} \leqslant T \sup_{t \in (0,T)} \nLtwo{u_t(s)}\ds+\nLtwo{u_0}.
	\end{array}
	\end{equation}
	\noindent Additionally testing the discretized-in-space problem with $v=u_{tt}(t)$ gives
	\begin{equation} \label{identity2_linwellp}
	\begin{aligned}
	&\|\sqrt{\alpha(t)}u_{tt}(t)\|^2_{L^2(\Omega)}+\frac12\frac{\textup{d}}{\textup{d}t}\|\sqrt{b(\varphi)}\nabla u_t(t)\|^2_{L^2(\Omega)} \\[1mm]
	=&\, \begin{multlined}[t]-(c^2(\varphi)\nabla u(t), \nabla u_{tt}(t))_{L^2}+(2k(\varphi) \beta(t) u_{t}(t)+f(t), u_{tt}(t))_{L^2(\Omega)}\\+\sca{g(t)}{u_{tt}(t)}_{\Gamma}.\end{multlined}
	\end{aligned}
	\end{equation}
	After integration over $(0,t)$, we can rely on the following bound to estimate the $k$ and $f$ terms:
	\begin{equation}\label{comput1}
	\begin{aligned}
	&\intT (2k(\varphi) \beta(t) u_{t}(t)+f(t), u_{tt}(t))_{L^2(\Omega)} \ds \\
	\leqslant&\, \begin{multlined}[t] 2k_f \|\beta\|_{L^\infty(L^2)}\|u_t\|_{L^2(L^4)}\nLtwotLtwo{u_{tt}}+ \nLtwoLtwo{f}\nLtwotLtwo{u_{tt}}
	\end{multlined} \\
	\leqslant&\, \begin{multlined}[t] \frac{C}{\epsilon} k_f^2 \|\beta\|^2_{L^\infty(L^2)}\bigg(\|u_t\|^2_{L^2(L^2)}+\|\nabla u_{t}\|^{2}_{L^{2}(L^{2})}\bigg)+2\epsilon \nLtwotLtwo{u_{tt}}^2\\+\frac{1}{4 \epsilon}\nLtwoLtwo{f}^2, \end{multlined}
	\end{aligned}
	\end{equation}
	where we have again relied on the embedding $H^1(\Omega) \hookrightarrow L^4(\Omega)$. Next, we employ integration by parts with respect to time to obtain
	\begin{equation}\label{comput2}
	\begin{aligned}
	&\left|\intT (c^2(\varphi)\nabla u, \nabla u_{tt})_{L^2} \ds \right|\\
	=&\, \begin{multlined}[t] 
	\left|(c^2(\varphi)\nabla u(s), \nabla u_{t}(s))_{L^2} \Big \vert_0^t -\int_0^t (c^2(\varphi)\nabla u_t, \nabla u_{t})_{L^2} \ds \right|
	\end{multlined} \\
	\leqslant&\,\begin{multlined}[t]  \frac{1}{4 \epsilon} c_f^4 \nLtwo{\nabla u(t)}^2+\epsilon \nLtwo{\nabla u_t(t)}^2+\frac{1}{4} c_f^4 \nLtwo{\nabla u_0}^2+\nLtwo{\nabla u_1}^2  
	+c_f^2 \nLtwoLtwo{\nabla u_t}^2  \end{multlined}\\
	\leqslant&\,\begin{multlined}[t]  \frac{1}{4 \epsilon} c_f^4 (\sqrt{T}\nLtwoLtwo{\nabla u_t}+\nLtwo{\nabla u_0})^2+\epsilon \nLtwo{\nabla u_t(t)}^2+\frac{1}{4} c_f^4 \nLtwo{\nabla u_0}^2\\+\nLtwo{\nabla u_1}^2  +c_f^2 \nLtwoLtwo{\nabla u_t}^2. \end{multlined}
	\end{aligned}
	\end{equation}
	Note that $g \in H^1(0,T; H^{-1/2}(\Gamma)) \hookrightarrow C([0,T]; H^{-1/2}(\Gamma))$. Thus we have further
	\begin{equation}\label{comput3}
	\begin{aligned}
	&\left|\int_0^t \sca{g(t)}{u_{tt}(t)}_{\Gamma} \ds \right|\\
	=&\, \left|\sca{g(s)}{u_{t}(s)}_{\Gamma}\Big \vert_0^t-\int_0^t \sca{g_t}{u_{t}}_{\Gamma} \ds \right| \\
	\leqslant&\, \begin{multlined}[t]\frac{1}{4 \epsilon}\|g(t)\|^2_{H^{-1/2}}+\epsilon C\|u_t(t)\|_{H^1}^2+\frac{1}{4}\|g(0)\|^2_{H^{-1/2}}+\epsilon C\|u_1\|_{H^1}^2 \\
	+\frac{1}{4}\|g_t\|_{L^2(H^{-1/2})}^2+ C \|u_t\|_{L^2(H^1)}^2. \hphantom{\ldots}
	\end{multlined}
	\end{aligned}
	\end{equation}
	Combining \eqref{identity2_linwellp} with \eqref{comput1}, \eqref{comput2}, \eqref{comput3}, fixing a suitably small value of the positive parameter $\epsilon$ and further using the positive lower bounds of $\alpha$ and $b(\cdot)$, we furnish the following for a.e.\ $t\in(0,T):$
	\begin{equation}
	\begin{aligned}
	&\|u_{tt}\|^{2}_{L^{2}(L^{2})}+\|\nabla u_{t}(t)\|^{2}_{L^{2}}\\
	 \leqslant&\,\begin{multlined}[t] C(k^{2}_{f}\|\beta\|^{2}_{L^{\infty}(L^{2})}+1)\|u_{t}\|^{2}_{L^{2}(L^{2})}+C(k^{2}_{f}\|\beta\|^{2}_{L^{\infty}(L^{2})}+CT+1)\|\nabla u_{t}\|^{2}_{L^{2}(L^{2})}\\
	+C(\|u_{t}(t)\|^{2}_{L^{2}}+\|\nabla u_{t}(t)\|^{2}_{L^{2}})+C(\|f\|^{2}_{L^{2}(L^{2})}+\|g\|^{2}_{H^{1}(H^{-1/2})}+\|(u_{0},u_{1})\|^{2}_{H^{1}}). \end{multlined}
	\end{aligned}
	\end{equation}
	Now incorporating \eqref{Linfest} in the above inequality, adding the resulting expression to \eqref{afterGronweak}, and once again using Gr\"onwall's inequality, we render the desired estimate \eqref{energy_est_linear}. This finishes the proof of Proposition \ref{Prop:LinWave}.
	
\end{proof}
\noindent We can now prove a well-posedness result for the semilinear problem by relying on Banach's fixed-point theorem.
\begin{theorem} \label{Thm:Wellposedness}
	Given $T>0$, let $g \in H^1(0,T; H^{-1/2}(\Gamma))$,  and 
	\[\alpha \in W^{1, \infty}(0,T; L^2(\Omega)) \cap L^\infty(0,T; L^\infty(\Omega)). \]
	Further,  assume that $\alpha$ satisfies the non-degeneracy condition \eqref{non-degeneracy} and that
	$(u_0, u_1) \in H^1(\Omega) \times H^1(\Omega)$. Then there exists $\delta>0$, such that if
	\begin{equation} \label{condition_data}
	\|u_0\|^2_{H^1}+\|u_1\|^2_{H^1}+\|g\|^2_{H^1(H^{-1/2})} \leqslant \delta,
	\end{equation}
	then for every $\varphi \in L^1(\Omega)$,  there is a unique $u \in U$, such that
	\begin{equation}\label{weakformmainprob} 
	\begin{aligned}
	\begin{multlined}[t]
	\intO	\alpha(t) u_{tt}(t) v \dx+\intO (c^2(\varphi) \nabla u(t)+b(\varphi) \nabla u_t(t)) \cdot \nabla v \dx \\
	= \intO 2k(\varphi) u_t^2(t) v\dx+ \int_{\Gamma} g(t) v \dx \quad \text{a.e.\ in } (0,T),
	\end{multlined}
	\end{aligned}
	\end{equation}
	for all $v \in H^1(\Omega)$, with $(u, u_t)\vert_{t=0}=(u_0, u_1)$.	Furthermore, the solution satisfies the following energy bound:
	\begin{equation} \label{Energy_bound_nl}
	\begin{aligned}
	\|u\|^2_{U} \leqslant \,C_1 \exp(C_2T)(\|u_0\|_{H^1}^2+\|u_1\|^2_{H^1}+\|g\|_{H^1(H^{-1/2})}^2).
	\end{aligned}
	\end{equation}
\end{theorem}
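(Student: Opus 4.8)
The plan is to reformulate the weak form \eqref{weakformmainprob} as a fixed-point problem for a map built from the linear theory in Proposition~\ref{Prop:LinWave}, and then to apply Banach's fixed-point theorem. The only nonlinearity, $2k(\varphi)u_t^2$, is quadratic in $u_t$, so I would linearize it by freezing one of the two factors: given a function $w$ coming from a previous iterate, I replace the right-hand side by $2k(\varphi)\,w_t\,u_t$. This is exactly the structure $2k(\varphi)\beta u_t + f$ admitted by Proposition~\ref{Prop:LinWave}, with $\beta = w_t$ and $f = 0$. Concretely, for a radius $R > 0$ to be fixed later I would work on
\[
B_R = \bigl\{ w \in U : \|w\|_U \leqslant R,\ (w, w_t)\vert_{t=0} = (u_0, u_1) \bigr\},
\]
a closed, hence complete, subset of $U$, and define $\mathcal{T}\colon B_R \to U$ by setting $\mathcal{T}(w)$ equal to the unique solution given by Proposition~\ref{Prop:LinWave} for the data $\beta = w_t$, $f = 0$, the prescribed $g$, and initial conditions $(u_0, u_1)$. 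This is well defined because $w \in U$ yields $w_t \in L^\infty(0,T;H^1(\Omega))$, so that $\|w_t\|_{L^\infty(L^2)} \leqslant \|w\|_U \leqslant R$ and the hypotheses of the proposition are met; moreover any fixed point $u = \mathcal{T}(u)$ solves \eqref{weakformmainprob}.

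For the self-mapping property I would invoke the energy estimate \eqref{energy_est_linear}, which with $f = 0$ gives
\[
\|\mathcal{T}(w)\|_U^2 \leqslant C_1 \exp(C_2 T)\bigl(\|u_0\|_{H^1}^2 + \|u_1\|_{H^1}^2 + \|g\|_{H^1(H^{-1/2})}^2\bigr) \leqslant C_1 \exp(C_2 T)\,\delta .
\]
The crucial point is that $C_2 = C_3\bigl(1 + \|\alpha_t\|_{L^\infty(L^2)} + \|w_t\|_{L^\infty(L^2)} + \|w_t\|_{L^\infty(L^2)}^2\bigr)$ depends on $w$ only through $\|w_t\|_{L^\infty(L^2)} \leqslant R$; hence, once I restrict to $R \leqslant 1$, the factor $\exp(C_2 T)$ is bounded by a constant $K = K(T, \|\alpha_t\|_{L^\infty(L^2)})$ that is independent of both $R$ and $\delta$. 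Self-mapping $\mathcal{T}(B_R) \subseteq B_R$ then holds as soon as $C_1 K \delta \leqslant R^2$.

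For the contraction, I would take $w_1, w_2 \in B_R$ and set $u_i = \mathcal{T}(w_i)$. Subtracting the two linear equations and regrouping
\[
w_{1,t}u_{1,t} - w_{2,t}u_{2,t} = w_{1,t}(u_{1,t} - u_{2,t}) + (w_{1,t} - w_{2,t})u_{2,t},
\]
one sees that $u := u_1 - u_2$ has vanishing initial and boundary data and solves the linear problem \eqref{weakformorprob} with coefficient $\beta = w_{1,t}$ (absorbing the first term above into the $\beta u_t$ contribution) and source $f = 2k(\varphi)(w_{1,t} - w_{2,t})u_{2,t}$. Applying \eqref{energy_est_linear} gives $\|u_1 - u_2\|_U^2 \leqslant C_1 K \|f\|_{L^2(L^2)}^2$, and the source is controlled by Hölder's inequality $\|pq\|_{L^2} \leqslant \|p\|_{L^4}\|q\|_{L^4}$ together with the embedding $H^1(\Omega) \hookrightarrow L^4(\Omega)$ behind \eqref{interpolationL4}:
\[
\|f\|_{L^2(L^2)} \leqslant 2\nLinf{k}\,\|u_{2,t}\|_{L^\infty(L^4)}\,\|w_{1,t} - w_{2,t}\|_{L^2(L^4)} \leqslant 2\nLinf{k}\,\Cembed^2\sqrt{T}\,R\,\|w_1 - w_2\|_U,
\]
using $\|u_{2,t}\|_{L^\infty(L^4)} \leqslant \Cembed\|u_2\|_U \leqslant \Cembed R$ and $\|w_{1,t} - w_{2,t}\|_{L^2(L^4)} \leqslant \Cembed\sqrt{T}\|w_1 - w_2\|_U$. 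Thus $\mathcal{T}$ is Lipschitz on $B_R$ with constant $\theta$ obeying $\theta^2 \leqslant 4 C_1 K \nLinf{k}^2 \Cembed^4 T R^2$.

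The two requirements are then reconciled by choosing constants in the right order, which is where I expect the only real subtlety to lie. Because $C_2$ — and hence $K$ — is bounded uniformly for all $R \leqslant 1$, I would first fix $R \in (0,1]$ small enough that $4 C_1 K \nLinf{k}^2 \Cembed^4 T R^2 < 1$, making $\mathcal{T}$ a contraction, and only afterwards choose $\delta \leqslant R^2/(C_1 K)$ to secure self-mapping; this last choice is precisely the smallness condition \eqref{condition_data}. Banach's fixed-point theorem then yields a fixed point $u \in B_R$ solving \eqref{weakformmainprob}, and the bound \eqref{Energy_bound_nl} follows by applying \eqref{energy_est_linear} once more to $u$ itself (with $\beta = u_t$, $f = 0$), the constant $C_2$ being admissible since $\|u_t\|_{L^\infty(L^2)} \leqslant R$. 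Uniqueness in the whole of $U$ needs no smallness and is cleaner: if $u, \tilde u \in U$ are two solutions, their difference $v = u - \tilde u$ satisfies \eqref{weakformorprob} with $\beta = u_t + \tilde u_t \in L^\infty(0,T;L^2(\Omega))$, zero source, zero boundary datum and zero initial data, since $u_t^2 - \tilde u_t^2 = (u_t + \tilde u_t)v_t$; the uniqueness part of Proposition~\ref{Prop:LinWave} then forces $v = 0$. The main obstacle is therefore the coupled choice of $R$ and $\delta$: one must verify that the linear constant $C_2$, which grows with $\|\beta\|_{L^\infty(L^2)} = \|w_t\|_{L^\infty(L^2)}$, can be bounded over the entire ball \emph{before} the radius is fixed, so that the contraction and self-mapping conditions can be met simultaneously by a single smallness threshold.
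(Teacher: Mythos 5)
Your proposal is correct and follows essentially the same route as the paper: Banach's fixed-point theorem applied to $\mathcal{T}\colon w \mapsto u$ on a ball in $U$ with matching initial data, where the quadratic nonlinearity is linearized by freezing one factor ($\beta = w_t$, $f=0$ in Proposition~\ref{Prop:LinWave}), self-mapping follows from \eqref{energy_est_linear}, and contractivity from the same splitting $w_{1,t}u_{1,t}-w_{2,t}u_{2,t} = w_{1,t}(u_{1,t}-u_{2,t}) + (w_{1,t}-w_{2,t})u_{2,t}$ estimated via $H^1 \hookrightarrow L^4$. The only substantive differences are bookkeeping and one improvement: the paper makes the contraction constant small through $\delta$ (since $\|u_t^{(2)}\|_{L^2(L^4)}^2 \lesssim T\delta$ by the energy bound for the image point), whereas you make it small through the radius $R$ — both orderings of "fix radius, then shrink $\delta$" are consistent; and your final uniqueness argument, applied to the difference of two arbitrary solutions in $U$ with $\beta = u_t + \tilde u_t$, yields uniqueness in all of $U$ without smallness, which is actually more complete than the paper's proof, since Banach's theorem as invoked there only gives uniqueness of the fixed point within the ball $B$.
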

\begin{proof}
	The statement follows by Banach's fixed-point theorem applied to the mapping\[
	\mathcal{T}: w \mapsto u,
	\]
where $w$ is taken from the ball  
	\begin{equation}
	\begin{aligned}
	B=\left\{w \in U\, :\ \|w\|_{U} \leqslant M, \ (w(0), w_t(0))=(u_0, u_1)\right\},
	\end{aligned}
	\end{equation}	
	with $M>0$ , and  $u$ solves the linear problem
	\begin{equation} 
	\begin{aligned}
	\begin{multlined}[t]
	\intO	\alpha(t) u_{tt}(t) v \dx+\intO (c^2 \nabla u(t)+b \nabla u_t(t)) \cdot \nabla v \dx \\
	= \intO 2k w_t(t) u_t(t) v\dx+ \int_{\Gamma} g(t) v \dx \quad \text{a.e.\ in } (0,T),
	\end{multlined}
	\end{aligned}
	\end{equation}
	for all $v \in H^1(\Omega)$, with $(u, u_t)\vert_{t=0}=(u_0, u_1)$.  Note that $\beta=w_t$ fulfills
	\[
	\|\beta\|_{L^\infty(L^2)}= \|w_t\|_{L^\infty(L^2)} \leqslant CM.
	\]
	Then all the assumptions of Proposition~\ref{Prop:LinWave} hold, so the mapping $\mathcal{T}$ is well-defined. The set $B$ is non-empty and closed with respect to the metric induced by the norm $\|\cdot \|_{U}$. Furthermore, on account of the proven energy estimate \eqref{energy_est_linear} with $f=0$, we have
	\begin{equation}
	\begin{aligned}
	\|u\|^2_{U}  \leqslant C_1\exp(C_2(M) T) \delta.
	\end{aligned}
	\end{equation}
	Thus, choosing $\delta>0$ so that
	\begin{equation} \label{condition1_delta}
	\sqrt{C_1\exp(C_2(M) T) \delta} \leqslant  M \end{equation}
	implies 
	$\mathcal{T}(B) \subset B$. \\
	\indent We next prove strict contractivity of the mapping. To this end, let $w^{(1)}$, $w^{(2)} \in B$. We denote $u^{(1)}=\mathcal{T} w^{(1)}$ and $u^{(2)}=\mathcal{T} w^{(2)}$. Further, let $\overline{w}=w^{(1)}-w^{(2)}$. The difference $\overline{u}=u^{(1)}-u^{(2)}$ then satisfies
	\begin{equation} 
	\begin{aligned}
	& \intO	\alpha(t) \overline{u}_{tt}(t) v \dx+\intO (c^2(\varphi) \nabla \overline{u}(t)+b(\varphi) \nabla \overline{u}_t(t)) \cdot \nabla v \dx \\
	=&\, \intO 2k(\varphi)( \overline{u}_tw_t^{(1)}+u_t^{(2)}\overline{w}_t) v\dx
	\end{aligned}
	\end{equation}
	a.e. in time, and has zero initial conditions. The energy bound  \eqref{energy_est_linear} with $u_0=u_1=0$, $g=0$, and
	\[
	f= 2k(\varphi)\overline{w}_t, \quad \beta = w_t^{(1)}, 
	\]
	immediately yields
	\begin{equation}
	\begin{aligned}
	\begin{multlined}[t]\|\overline{u}\|^2_{U} \end{multlined}
	\leqslant& \,C_1 \exp(C_2T)\nLtwoLtwo{2k(\varphi)u_t^{(2)}\, \overline{w}_t }^2 \\
	\leqslant& \,C_1 \exp(C_2T)2 k_f \|u_t^{(2)}\|^2_{L^2(L^4)} \|\overline{w}_t \|^2_{L^\infty(L^4)}\\
	\leqslant&\, \,C_3 \exp(C_2T)T\delta \|\overline{w}\|^2_{U}.
	\end{aligned}
	\end{equation}
	Thus by additionally reducing $\delta$, we obtain strict contractivity of the mapping $\mathcal{T}$ with respect to $\|\cdot\|_{U}$. Banach's fixed-point theorem thus provides us with a unique solution $p= \mathcal{T}(p)$ of the problem in $B$.
\end{proof}
\section{Formulation as an optimal control problem} \label{Sec:OptimalControlFormulation}
We can restate the phase-field problem as an optimal control problem by introducing the mapping
\[ S: L^1(\Omega) \ni \varphi \mapsto u \in U, \]
which is well-defined under the assumptions of Theorem~\ref{Thm:Wellposedness}. Our next goal is to show differentiability of this control-to-state operator. To this end, we first prove a stability result with respect to a lower-order norm (in time). To formulate it, let us introduce the space
\begin{equation}
\begin{aligned}
X = \{u \in L^\infty(0,T; H^1(\Omega)): \ u_t \in  L^2(0,T; H^1(\Omega))\}
\end{aligned}
\end{equation}
with the associated norm
\begin{equation} \label{norm_X}
\begin{aligned}
\|u\|_{X}=&\, \begin{multlined}[t] \left\{\sup_{t \in (0,T)}\nHone{u(t)}^2 +\int_0^T\| u_{t}(s)\|_{H^1}^2 \ds\right\}^{1/2}.\end{multlined}
\end{aligned}
\end{equation}
\begin{proposition}\label{Propstability}
Under the assumptions of Theorem~\ref{Thm:Wellposedness}, let $u^{(1)}$ and $u^{(2)}$ be the weak solutions of the state problem corresponding to the phase-field functions $\varphi_1$ and $\varphi_2$, respectively. Then 
	\begin{equation}
	\begin{aligned}
\|\overline{u}\|_{X}  \leqslant C(\delta, T)\|\overline{\varphi}\|_{L^\infty}
	\end{aligned}
	\end{equation}
a.e.\ in time, where $\overline{\varphi}=\varphi_1-\varphi_2$ and $\overline{u}=u^{(1)}-u^{(2)}$.	
\end{proposition}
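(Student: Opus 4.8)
The plan is to write down the equation satisfied by the difference $\overline{u}=u^{(1)}-u^{(2)}$ and run the $\overline{u}_t$-energy argument from Proposition~\ref{Prop:LinWave}, but stopping at precisely the lower-order level that the norm $\|\cdot\|_X$ measures: no second time derivative, and $\overline{u}_t$ only in $L^2(H^1)$.

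First I would subtract the two weak formulations \eqref{weakformmainprob}, written for $(\varphi_1,u^{(1)})$ and $(\varphi_2,u^{(2)})$. Denoting $\overline{c^2}=c^2(\varphi_1)-c^2(\varphi_2)$ and, analogously, $\overline{b}$ and $\overline{k}$, and splitting each product in the telescoping manner $c^2(\varphi_1)\nabla u^{(1)}-c^2(\varphi_2)\nabla u^{(2)}=c^2(\varphi_1)\nabla\overline{u}+\overline{c^2}\nabla u^{(2)}$ (and likewise for the $b$- and $k$-terms), I obtain for a.e.\ $t$ and all $v \in H^1(\Omega)$
\begin{equation*}
\begin{aligned}
&\intO \alpha \overline{u}_{tt} v \dx + \intO \big(c^2(\varphi_1)\nabla\overline{u}+b(\varphi_1)\nabla\overline{u}_t\big)\cdot\nabla v \dx - \intO 2k(\varphi_1)(u^{(1)}_t+u^{(2)}_t)\overline{u}_t\, v \dx \\
&\quad = -\intO \overline{c^2}\,\nabla u^{(2)}\cdot\nabla v \dx - \intO \overline{b}\,\nabla u^{(2)}_t\cdot\nabla v \dx + \intO 2\overline{k}\,(u^{(2)}_t)^2 v \dx,
\end{aligned}
\end{equation*}
with $\overline{u}(0)=0$ and $\overline{u}_t(0)=0$. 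The structural point that renders the right-hand side proportional to $\|\overline{\varphi}\|_{L^\infty}$ is that, by the extended definition \eqref{interpolated_coefficients_extended}, the maps $\varphi\mapsto c^2(\varphi),b(\varphi),k(\varphi)$ are globally Lipschitz, whence $|\overline{c^2}|,|\overline{b}|,|\overline{k}|\leqslant C|\overline{\varphi}|\leqslant C\|\overline{\varphi}\|_{L^\infty}$ pointwise a.e.

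Next I would test this identity with $v=\overline{u}_t$ and integrate over $(0,t)$, exactly as in Proposition~\ref{Prop:LinWave}. Since $\varphi_1$ is time-independent, differentiating the elliptic terms produces no extra contributions, and after using the lower bounds $\ulal$ and $b_l$ the left-hand side controls $\tfrac{\ulal}{2}\|\overline{u}_t(t)\|^2_{L^2} + \tfrac{c_l^2}{2}\|\nabla\overline{u}(t)\|^2_{L^2} + b_l\,\nLtwoLtwo{\nabla\overline{u}_t}^2$. The $\alpha_t$-contribution and the indefinite reaction term $\intO 2k(\varphi_1)(u^{(1)}_t+u^{(2)}_t)\overline{u}_t^2 \dx$ are treated exactly as in Proposition~\ref{Prop:LinWave}, via $H^1(\Omega)\hookrightarrow L^4(\Omega)$, Ehrling's lemma \eqref{interpolationL4} and Young's inequality, using that $\|u^{(i)}_t\|_{L^\infty(L^4)}\leqslant C\|u^{(i)}\|_U$ is finite by the energy bound \eqref{Energy_bound_nl}. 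The three source terms are bounded by $C\|\overline{\varphi}\|_{L^\infty}$ times, respectively, $\|\nabla u^{(2)}\|_{L^2}\|\nabla\overline{u}_t\|_{L^2}$, $\|\nabla u^{(2)}_t\|_{L^2}\|\nabla\overline{u}_t\|_{L^2}$ and $\|u^{(2)}_t\|^2_{L^4}\|\overline{u}_t\|_{L^2}$, where $\|\nabla u^{(2)}\|_{L^\infty(L^2)}$, $\|\nabla u^{(2)}_t\|_{L^\infty(L^2)}$ and $\|u^{(2)}_t\|_{L^\infty(L^4)}$ are all finite because $u^{(2)}\in U$. Absorbing the factors of $\|\nabla\overline{u}_t\|_{L^2}$ into the coercive term $b_l\,\nLtwoLtwo{\nabla\overline{u}_t}^2$ for $\epsilon$ small and invoking Grönwall's inequality yields
\begin{equation*}
\sup_{t\in(0,T)}\big(\|\overline{u}_t(t)\|^2_{L^2}+\|\nabla\overline{u}(t)\|^2_{L^2}\big) + \nLtwoLtwo{\nabla\overline{u}_t}^2 \leqslant C(\delta,T)\|\overline{\varphi}\|_{L^\infty}^2 .
\end{equation*}
Because $\overline{u}(0)=0$ and $\overline{u}_t(0)=0$, the remaining pieces of the $X$-norm follow from $\|\overline{u}(t)\|_{L^2}\leqslant\sqrt{T}\,\nLtwoLtwo{\overline{u}_t}$ and $\nLtwoLtwo{\overline{u}_t}^2\leqslant T\sup_{t}\|\overline{u}_t(t)\|^2_{L^2}$, which upgrades the above to the claimed bound on $\|\overline{u}\|_X$.

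I expect the main obstacle to be the source term $\intO\overline{b}\,\nabla u^{(2)}_t\cdot\nabla v\,\dx$, and it is exactly this term that forces the weaker norm $X$ rather than $U$. Indeed, to reach the full $U$-estimate one would additionally test with $\overline{u}_{tt}$ and integrate by parts in time, as in the second half of the proof of Proposition~\ref{Prop:LinWave}; this would generate a factor $\nabla u^{(2)}_{tt}$, which is unavailable since $u^{(2)}_{tt}$ lies only in $L^2(0,T;L^2(\Omega))$ with no spatial regularity. Testing solely with $\overline{u}_t$ circumvents this difficulty at the price of the lower-order topology, which is precisely what the statement asserts.
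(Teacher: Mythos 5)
Your proposal is correct and follows essentially the same route as the paper: subtract the two weak formulations with the telescoping decomposition, test with $\overline{u}_t$, bound the coefficient-difference source terms by $\|\overline{\varphi}\|_{L^\infty}$ times $U$-norms of $u^{(2)}$, handle the quadratic reaction term via $H^1(\Omega)\hookrightarrow L^4(\Omega)$ and Ehrling's lemma, absorb the $\nabla\overline{u}_t$ factors, and conclude with Gr\"onwall. The only differences are cosmetic (slightly different H\"older splittings, and your closing remark explaining why the $b$-term obstructs a $U$-estimate, which the paper leaves implicit); both arguments are sound.
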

\begin{proof}
We begin the proof by noting that the difference $\overline{u}=u^{(1)}-u^{(2)}$ satisfies
\begin{equation} 
\begin{aligned}
& \intO	\alpha(t) \overline{u}_{tt}(t) v \dx+\intO (c^2(\varphi_1) \nabla \overline{u}(t)+b(\varphi_1) \nabla \overline{u}_t(t)) \cdot \nabla v \dx \\
=&\,\begin{multlined}[t] \intO -\overline{\varphi}((c^2_f-c^2_l)\nabla u^{(2)}(t)+ (b_f-b_l)\nabla u_t^{(2)}(t)) \cdot \nabla v\dx \\
+ \intO (2k(\varphi_1) \overline{u}_t(t)(u_t^{(1)}(t)+u_t^{(2)}(t))+2 \overline{\varphi}(k_f-k_l)(u_t^{(2)}(t))^2)v \dx
\end{multlined}
\end{aligned}
\end{equation}
for all $v \in H^1(\Omega)$ a.e.\ in time, and has zero initial conditions. Testing with $v=\overline{u}_t(t) \in H^1(\Omega)$ at first results in the identity
\begin{equation} \label{identity_continuity}
\begin{aligned}
&\begin{multlined}[t]\frac12\frac{\textup{d}}{\textup{d}t}\|\sqrt{\alpha(t)}\overline{u}_t(t)\|^2_{L^2(\Omega)}+\frac12\frac{\textup{d}}{\textup{d}t}\|c(\varphi_1)\nabla \overline{u}(t)\|^2_{L^2(\Omega)}\\
+\|\sqrt {b(\varphi_1)}\nabla \overline{u}_t\|^2_{L^2} \end{multlined}\\[1mm]
=&\,\begin{multlined}[t] (\frac12\alpha_t \overline{u}_t, \overline{u}_{t})_{L^2(\Omega)}-(\overline{\varphi}((c^2_f-c^2_l)\nabla u^{(2)}+ (b_f-b_l)\nabla u_t^{(2)}), \nabla \overline{u}_t)_{L^2} \\
+ ( 2k(\varphi_1) \overline{u}_t(u_t^{(1)}+u_t^{(2)})+2 \overline{\varphi}(k_f-k_l)(u_t^{(2)})^2,\overline{u}_t )_{L^2}.\end{multlined}
\end{aligned}
\end{equation}
We focus on estimating the last two terms since the rest can be handled as in the proof of Proposition~\ref{Prop:LinWave}. After integration over $(0,t)$, we have
\begin{equation} \label{est1_continuity}
\begin{aligned}
&-\inttO (\overline{\varphi}((c^2_f-c^2_l)\nabla u^{(2)}+ (b_f-b_l)\nabla u_t^{(2)}), \nabla \overline{u}_t)_{L^2} \dxs \\
\leqslant& \,C\nLinf{\overline{\varphi}}(\nLtwotLtwo{\nabla u^{(2)}}+\nLtwotLtwo{\nabla u_t^{(2)}})\nLtwotLtwo{ \nabla \overline{u}_t} \\
\leqslant&\, C(T, \delta)  \nLinf{\overline{\varphi}}^2+\epsilon \|\nabla \overline{u}_t\|^2_{L^2(L^2)},
\end{aligned}
\end{equation}
on account of $u^{(2)}$ satisfying the energy bound \eqref{Energy_bound_nl}. Similarly, 
\begin{equation} \label{est2_continuity}
\begin{aligned}
& \inttO ( (2k(\varphi_1) \overline{u}_t(u_t^{(1)}+u_t^{(2)})+2 \overline{\varphi}(k_f-k_l)(u_t^{(2)})^2),\overline{u}_t )_{L^2} \dxs \\
\leqslant& \begin{multlined}[t]\,2k_f \|\overline{u}_t\|_{L^2(L^4)}^2(\|u^{(1)}_t\|_{L^\infty(L^2)}+\|u^{(2)}_t\|_{L^\infty(L^2)})\\ +2\nLinf{\overline{\varphi}}\|u_t^{(2)}\|_{L^2(L^4)} \|u_t^{(2)}\|_{L^\infty(L^2)}\|\overline{u}_t\|_{L^2(L^4)} \end{multlined} \\
\leqslant&\, C \sqrt{\delta} \|\overline{u}_t\|_{L^2(L^4)}^2+C(T, \delta)  \nLinf{\overline{\varphi}}^2+\epsilon \|\overline{u}_t\|^2_{L^2(L^4)}\\
\leqslant&\, C \sqrt{\delta} \left(C(\epsilon)\|\overline{u}_t\|_{L^2(L^2)}+ \epsilon\|\nabla \overline{u}_t\|_{L^2(L^2)}\right)^2+C(T, \delta)  \nLinf{\overline{\varphi}}^2+\epsilon \|\overline{u}_t\|^2_{L^2(L^4)},
\end{aligned}
\end{equation}
where we have relied on Ehrling's lemma in the last line. Thus the desired estimate follows from \eqref{identity_continuity}--\eqref{est2_continuity} by standard arguments.
\end{proof}
Before stating the differentiability of the control-to-state operator, we prove the following preparatory result for $S'(\varphi) h=u^*$. 
\begin{proposition}\label{Prop:u_star}
Under the assumptions of Theorem~\ref{Thm:Wellposedness} and for a fixed $h\in L^{\infty}(\Omega)\cap H^{1}(\Omega),$ there exists a unique $u^* \in X$ which solves	
\begin{equation}\label{diff_state}
\begin{aligned}
&\begin{multlined}[t]
\langle \alpha(t) u^*_{tt}(t), v \rangle_{\Omega}+\intO (c^2(\varphi) \nabla u^*(t)+b(\varphi) \nabla u^*_t(t)) \cdot \nabla v \dx \end{multlined}\\
=&\, \begin{multlined}[t]-\intO (2c(\varphi)c'(\varphi)h \nabla u(t)+b'(\varphi)h \nabla u_t(t)) \cdot \nabla v \dx \\+\intO 4k(\varphi) u_t u_t^*(t) v\dx+\intO 2k'(\varphi)h u_t^2(t) v\dx  \end{multlined}
\end{aligned}
\end{equation}
a.e. in time, with $(u^*, u^*_t)\vert_{t=0}=(0, 0)$.
\end{proposition}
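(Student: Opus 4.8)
The plan is to treat \eqref{diff_state} as a \emph{linear} wave equation for the unknown $u^*$, in which $u\in U$ (the fixed state associated with $\varphi$) and $h\in L^\infty(\Omega)\cap H^1(\Omega)$ enter only as given data. Existence would follow from a Faedo--Galerkin procedure essentially identical to the one underlying Proposition~\ref{Prop:LinWave}, and uniqueness from linearity together with the energy estimate applied to the difference of two solutions, which solves the homogeneous problem with zero data and hence vanishes. The single structural novelty relative to Proposition~\ref{Prop:LinWave} is that the first two source terms on the right-hand side of \eqref{diff_state} appear in divergence form, $-\intO(2c(\varphi)c'(\varphi)h\nabla u+b'(\varphi)h\nabla u_t)\cdot\nabla v\dx$, rather than as an $L^2(L^2)$ forcing tested against $v$. (Note that by the interpolation \eqref{interpolated_coefficients_extended} the coefficients $2cc'$, $b'$, $k'$ are in fact constants.) For this reason I would target the weaker space $X$ instead of $U$: only the first energy identity, obtained by testing with $v=u^*_t$, is available, whereas the second one (testing with $v=u^*_{tt}$, which produced the $U$-bound in Proposition~\ref{Prop:LinWave}) would force an integration by parts in time against $\nabla u^*_{tt}$, thereby requiring control of $\nabla u_{tt}$, which $u\in U$ does not provide.

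Concretely, I would test the semi-discrete problem with $v=u^*_t$ and integrate over $(0,t)$, using the zero initial data. The $\alpha_t$-term and the dissipation $\|\sqrt{b(\varphi)}\nabla u^*_t\|^2_{L^2}$ are handled exactly as in Proposition~\ref{Prop:LinWave}. The lower-order term $\intO 4k(\varphi)u_t(u^*_t)^2\dx$ is the analogue of the $2k(\varphi)\beta u^*_t$ term with $\beta=2u_t\in L^\infty(L^2)$; it is bounded by $Ck_f\nLinfLtwo{u_t}\nLtwotLfour{u^*_t}^2$ and then treated by Ehrling's lemma \eqref{interpolationL4} and Young's inequality. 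The genuinely new contributions are the divergence-form source and the forcing $\intO 2k'(\varphi)h u_t^2 u^*_t\dx$. After integration over $(0,t)$, using $h\in L^\infty(\Omega)$ and $u\in U$, these are estimated by
\[
C\nLinf{h}\big(\nLtwotLtwo{\nabla u}+\nLtwotLtwo{\nabla u_t}\big)\nLtwotLtwo{\nabla u^*_t}
+C\nLinf{h}\,\|u_t\|_{L^4(L^4)}^2\,\nLtwotLtwo{u^*_t},
\]
where every factor depending on $u$ is finite by the energy bound \eqref{Energy_bound_nl}. Young's inequality then absorbs a term $\epsilon\nLtwotLtwo{\nabla u^*_t}^2$ into the dissipation on the left at the cost of a constant $C(\delta,T)\nLinf{h}^2$.

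With all borderline terms absorbed, Grönwall's inequality yields $\|u^*\|_X^2\leqslant C(\delta,T)\nLinf{h}^2$, uniformly in the Galerkin index. Standard weak-$*$ compactness in $L^\infty(0,T;H^1)$ for $u^*$ and weak compactness in $L^2(0,T;H^1)$ for $u^*_t$, together with passage to the limit in the linear Galerkin equations, then produce a solution $u^*\in X$; since the problem is linear in $u^*$, the limit identification requires only weak convergence and is routine.

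The main subtlety I expect is making rigorous sense of the duality pairing $\langle\alpha(t)u^*_{tt}(t),v\rangle_\Omega$, since $X$ does not control $u^*_{tt}$ in $L^2(L^2)$. I would resolve this by reading \eqref{diff_state} as an identity in $L^2(0,T;(H^1(\Omega))^*)$: for a.e.\ $t$ every term except $\alpha u^*_{tt}$ defines a bounded functional on $H^1(\Omega)$ — the divergence and dissipation terms because $\nabla u^*,\nabla u^*_t\in L^2(L^2)$, and the nonlinear terms because $u_t\in L^\infty(L^4)$ while $u^*_t\in L^2(L^4)$ — so that $\alpha u^*_{tt}$, and hence $u^*_{tt}$ by the non-degeneracy \eqref{non-degeneracy} of $\alpha$, belongs to $L^2(0,T;(H^1)^*)$. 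This simultaneously endows $u^*_t$ with the regularity needed to recover the initial condition $u^*_t(0)=0$, via the continuous embedding $\{u^*_t\in L^2(H^1):u^*_{tt}\in L^2((H^1)^*)\}\hookrightarrow C([0,T];L^2(\Omega))$.
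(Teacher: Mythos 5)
Your proposal is correct and takes essentially the same route as the paper's proof: test the (semi-discrete Galerkin) problem with $v=u^*_t$, estimate the divergence-form sources, the term $4k(\varphi)u_t|u^*_t|^2$, the forcing $2k'(\varphi)hu_t^2u^*_t$, and the $\alpha_t$ contribution via H\"older, Ehrling's lemma and Young's inequality, and close with Gr\"onwall to get the uniform bound in $X$, with rigor supplied by a standard Faedo--Galerkin argument. The extra details you provide --- the explanation of why only the $X$-norm (and not the $U$-norm) is attainable, the limit passage, uniqueness by linearity, and the interpretation of $\langle \alpha u^*_{tt}, v\rangle$ in $L^2(0,T;(H^1(\Omega))^*)$ --- go beyond the paper's exposition, which records only the a priori estimate.
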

\begin{proof}
	We only present the uniform a priori estimates of $u^{*}$ towards the proof of Proposition~\ref{Prop:u_star}. A rigorous proof can be performed by employing standard Faedo--Galerkin type approximation argument.\\
	\indent	In the direction of obtaining a priori estimates we use the test function $v=u^{*}_{t}(t)$ in the weak formulation \eqref{diff_state} and furnish the following:
	\begin{equation}\label{afttestut*}
	\begin{aligned}
	&\int_{\Omega}\alpha u^{*}_{tt}u^{*}_{t}\dx+\int_{\Omega}c^{2}(\varphi)\nabla u^{*}\cdot\nabla u^{*}_{t}\dx+\int_{\Omega}b(\varphi)\nabla u^{*}_{t}\cdot \nabla u^{*}_{t}\dx\\
	 =& \, \begin{multlined}[t]-2\int_{\Omega}c(\varphi) c'(\varphi)h\nabla u\cdot\nabla u^{*}_{t}\dx-\int_{\Omega}b'(\varphi)h\nabla u_{t}\cdot\nabla u^{*}_{t}\dx\\+4\int_{\Omega}k(\varphi)u_{t}|u^{*}_{t}|^{2}\dx
	 +2\int_{\Omega}k'(\varphi)h u^{2}_{t}u^{*}_{t}\dx. \end{multlined}
	\end{aligned}
	\end{equation}
	The equality \eqref{afttestut*} can be written as
	\begin{equation}\label{reformafttestut}
	\begin{aligned}
	&\frac{1}{2}\ddt\int_{\Omega}\alpha|u^{*}_{t}|^{2}\dx+\frac{1}{2}\ddt\int_{\Omega}c^{2}(\varphi)|\nabla u^{*}|^{2}\dx+\int_{\Omega}b(\varphi)|\nabla u^{*}_{t}|^{2}\dx\\
	=&\,\begin{multlined}[t]-2\int_{\Omega}c(\varphi) c'(\varphi)h\nabla u\cdot\nabla u^{*}_{t}-\int_{\Omega}b'(\varphi)h\nabla u_{t}\cdot\nabla u^{*}_{t}\dx\\+4\int_{\Omega}k(\varphi)u_{t}|u^{*}_{t}|^{2}\dx
	+2\int_{\Omega}k'(\varphi)h u^{2}_{t}u^{*}_{t}\dx+\frac{1}{2}\int_{\Omega}\alpha_{t}|u^{*}_{t}|^{2}\dx=\sum_{i=1}^{5} I_{i}.\end{multlined}
	\end{aligned}
	\end{equation}
	In the following we will estimate the terms $I_{i}.$ The generic constants $C$ in the following inequalities might differ from one line to another and they can depend on $\|h\|_{L^{\infty}(\Omega)},$ $\|(u_{0},u_{1})\|_{H^{1}(\Omega)\times H^{1}(\Omega)},$ $\|g\|_{H^1(H^{-1/2})},$ $\underline{\alpha},$ $\overline{\alpha},$ $c_{i},$ $k_{i}$, and $b_{i}$ for $i\in\{f,l\}.$
	\begin{equation}\label{I1}
	\begin{array}{ll}
	|I_{1}|\leqslant C\|\nabla u\|^{2}_{L^{2}}+\epsilon\|\sqrt{b(\varphi)}\nabla u^{*}_{t}\|^{2}_{L^{2}},
	\end{array}
	\end{equation}
	for some positive parameter $\epsilon,$ since $\|2c(\varphi)c'(\varphi)\|_{L^{\infty}}\leqslant C,$ which follows by performing the derivative of \eqref{interpolated_coefficients}$_{1}.$\\
	\indent Now let us estimate $I_{2}:$
	\begin{equation}\label{I2}
	\begin{array}{ll}
	|I_{2}|\leqslant C\|\nabla u_{t}\|^{2}_{L^{2}}+\epsilon\|\sqrt{b(\varphi)}\nabla u^{*}_{t}\|^{2}_{L^{2}},
	\end{array}
	\end{equation}
	for some positive parameter $\epsilon.$ The term $I_{3}$ can be estimated as follows:
	\begin{equation}\label{I3}
	\begin{aligned}
	|I_{3}|&\displaystyle\leqslant C\|u_{t}\|_{L^{4}}\|u^{*}_{t}\|_{L^{4}}\|u^{*}_{t}\|_{L^{2}}\\
	&\displaystyle \leqslant C\|u_{t}\|^{2}_{L^{4}}\|u^{*}_{t}\|^{2}_{L^{2}}+\epsilon\|u^{*}_{t}\|^{2}_{L^{4}}\\
	&\displaystyle \leqslant C\|u_{t}\|^{2}_{L^{4}}\|\sqrt{\alpha}u^{*}_{t}\|^{2}_{L^{2}}+\epsilon\bigg(\|u^{*}_{t}\|^{2}_{L^{2}}+\|\nabla u^{*}_{t}\|^{2}_{L^{2}}\bigg)\\
	&\displaystyle \leqslant C\left(\left(\|u_{t}\|^{2}_{L^{4}}+1\right)\|\sqrt{\alpha}u^{*}_{t}\|^{2}_{L^{2}}+\epsilon \|\sqrt{b(\varphi)}\nabla u^{*}_{t}\|^{2}_{L^{2}}\right).
	\end{aligned}
	\end{equation} 	
	Finally, $I_{4}$ and $I_{5}$ admit the following estimates:
	\begin{equation}\label{I4}
	\begin{array}{l}
	\displaystyle|I_{4}|\leqslant C\|u^{*}_{t}\|^{2}_{L^{2}}+\|u_{t}\|^{4}_{L^{4}}
	\end{array}
	\end{equation}	
	and 
	\begin{equation}\label{I5}
	\begin{aligned}
	|I_{5}|&\displaystyle\leqslant C\|\alpha_{t}\|_{L^{2}}\|u^{*}_{t}\|^{2}_{L^{4}}\\
	&\displaystyle \leqslant C\|\alpha_{t}\|_{L^{2}}\|u^{*}_{t}\|^{2}_{L^{2}}+\epsilon\|\alpha_{t}\|_{L^{2}}\|\nabla u^{*}_{t}\|^{2}_{L^{2}}\\
	&\displaystyle \leqslant C\left(\|\alpha_{t}\|_{L^{2}}\|\sqrt{\alpha}u^{*}_{t}\|^{2}_{L^{2}}+\epsilon\|\alpha_{t}\|_{L^{2}}\|\sqrt{b(\varphi)}\nabla u^{*}_{t}\|^{2}_{L^{2}}\right).
	\end{aligned}
	\end{equation}	
	In view of the above estimates of $I_{i},$ $i\in\{1,...,5\},$ the fact that $u\in U,$ $\alpha_{t}\in  L^{\infty}(0,T;L^{2})$ and the functions $\alpha,$ $c^{2}(\varphi),$ and $b(\varphi)$ are bounded from below by positive constants, one can use \eqref{reformafttestut} and Gr\"{o}nwall's inequality to furnish the following for small enough values of $\epsilon>0$:
	\begin{equation}\label{boundsu*}
	\begin{array}{ll}
	\displaystyle\|u^{*}_{t}\|_{L^{\infty}(0,T;L^{2})}+\|\nabla u^{*}\|_{L^{\infty}(0,T;L^{2})}+\|\nabla u^{*}_{t}\|_{L^{2}(0,T;L^{2})}\leqslant C,
	\end{array}
	\end{equation}
	which completes the proof.
\end{proof}
\noindent We are now ready to prove differentiability of the control-to-state operator. 
\begin{theorem}\label{Thm:Continuity} \label{Thm:ParameterToState}
Under the assumptions of Theorem~\ref{Thm:Wellposedness}, the control-to-state operator $S: \Phi_{\textup{ad}} \rightarrow U$ is well-defined. Furthermore, it is Fr\'echet differentiable as an operator $S: \Phi_{\textup{ad}} \rightarrow X$ and its directional derivative at $\varphi \in L^\infty(\Omega)$ in the direction of $h \in L^\infty(\Omega)$ is given by
	\begin{equation}
	S'(\varphi) h=u^*, 
	\end{equation}
where $u^*$ is the unique solution of \eqref{diff_state}.
\end{theorem}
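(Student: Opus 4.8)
The plan is to take $u^*=S'(\varphi)h$, the solution of \eqref{diff_state} furnished by Proposition~\ref{Prop:u_star}, as the candidate Fr\'echet derivative, and to show that the remainder
\[
r := S(\varphi+h)-S(\varphi)-u^* = u^h-u-u^*, \qquad u:=S(\varphi), \quad u^h:=S(\varphi+h),
\]
satisfies $\|r\|_X=o(\|h\|_{L^\infty})$ as $\|h\|_{L^\infty}\to 0$. Well-definedness of $S:\Phi_{\textup{ad}}\to U$ is immediate from Theorem~\ref{Thm:Wellposedness}. The fact that differentiability is claimed only into the weaker space $X$, rather than $U$, reflects the quadratic gradient-type nonlinearity $u_t^2$: linearising costs one time-derivative, which is precisely why Propositions~\ref{Propstability} and~\ref{Prop:u_star} were set up in the $X$-norm.

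Before turning to the remainder, I would record that $h\mapsto u^*$ defines a bounded linear operator from $L^\infty(\Omega)$ into $X$. Linearity is immediate, since the right-hand side of \eqref{diff_state} is linear in $h$ and the left-hand side is linear in $u^*$; the bound $\|u^*\|_X\le C\|h\|_{L^\infty}$ follows by re-running the a priori estimate \eqref{boundsu*} while tracking the explicit factor $h$ in the source terms, the homogeneous contribution $4k(\varphi)u_tu^*_t$ being absorbed by Gr\"onwall without affecting the $h$-scaling. This is exactly what is needed for $u^*$ to qualify as a bounded linear Fr\'echet derivative.

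The core of the argument is the equation for $r$. Subtracting the weak formulations for $u$ and for $u^*$ from that for $u^h$, and using that the interpolated coefficients \eqref{interpolated_coefficients_extended} are affine in $\varphi$ — so that $c^2(\varphi+h)-c^2(\varphi)=2c(\varphi)c'(\varphi)h$ and analogously for $b$ and $k$ — the second-order parts reorganise into $c^2(\varphi)\nabla r+b(\varphi)\nabla r_t$, and $r$ solves, with zero initial data,
\[
\langle \alpha(t) r_{tt}(t), v\rangle_\Omega + \intO \bigl(c^2(\varphi)\nabla r + b(\varphi)\nabla r_t\bigr)\cdot\nabla v\dx = \intO 4k(\varphi)u_t r_t\, v\dx + \mathcal{R}(v),
\]
where $\overline u:=u^h-u$ and
\[
\begin{aligned}
\mathcal{R}(v) = {}& -\intO 2c(\varphi)c'(\varphi)h\,\nabla\overline u\cdot\nabla v\dx - \intO b'(\varphi)h\,\nabla\overline u_t\cdot\nabla v\dx\\
&+ \intO\bigl(2k(\varphi)\overline u_t^{\,2}+4k'(\varphi)h\,u_t\overline u_t+2k'(\varphi)h\,\overline u_t^{\,2}\bigr)v\dx.
\end{aligned}
\]
The crucial algebraic observation, obtained by writing $u^h_t=u_t+\overline u_t$ and expanding $(u^h_t)^2$, is that the $k$-nonlinearity produces precisely the linear-in-$r_t$ term $4k(\varphi)u_tr_t$ — because $\overline u_t-u^*_t=r_t$ — together with the genuinely quadratic leftovers gathered in $\mathcal R$. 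Every term of $\mathcal R$ carries either a factor $h$ alongside $\overline u$ or a factor $\overline u_t^{\,2}$, so by the stability estimate $\|\overline u\|_X\le C(\delta,T)\|h\|_{L^\infty}$ of Proposition~\ref{Propstability} all of them are $O(\|h\|_{L^\infty}^2)$ in the $L^2(L^2)$ and $L^2(L^4)$ norms.

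Finally I would test the $r$-equation with $v=r_t$ and run the energy argument from Propositions~\ref{Prop:LinWave} and~\ref{Prop:u_star}: the term $\intO 4k(\varphi)u_t\,|r_t|^2\dx$ is treated exactly as $I_3$ there, via $H^1(\Omega)\hookrightarrow L^4(\Omega)$, Ehrling's lemma \eqref{interpolationL4}, and Gr\"onwall's inequality, while $\mathcal R(r_t)$ is bounded, after Young's inequality, by $C(\delta,T)\|h\|_{L^\infty}^2$ plus gradient contributions absorbed into the left-hand side. Gr\"onwall's lemma then yields $\|r\|_X\le C(\delta,T)\|h\|_{L^\infty}^2=o(\|h\|_{L^\infty})$, which establishes Fr\'echet differentiability with $S'(\varphi)h=u^*$. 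I expect the main obstacle to be the bookkeeping of the nonlinear $k$-term: one must split $(u^h_t)^2$ so as to isolate the linear contribution $4k(\varphi)u_tr_t$ — which is \emph{not} small and must be absorbed by Gr\"onwall rather than estimated away — from the truly quadratic remainders, and this must be carried out using only $H^1$ spatial regularity, so that every product is closed through the $L^4$-embedding and Ehrling's inequality instead of pointwise bounds. A secondary technical point is the non-smoothness of the piecewise-affine coefficients at $\varphi\in\{0,1\}$, where the increment identity for $c^2(\varphi+h)-c^2(\varphi)$ holds only up to a set whose contribution is controlled by $\|h\|_{L^\infty}$.
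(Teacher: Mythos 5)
Your proposal is correct and follows essentially the same route as the paper's proof: the candidate derivative is the solution $u^*$ of \eqref{diff_state} from Proposition~\ref{Prop:u_star}, the remainder $r=S(\varphi+h)-S(\varphi)-u^*$ is shown to solve a wave equation whose right-hand side consists of the non-small linear term $4k(\varphi)u_t r_t$ plus quadratically small leftovers, and the conclusion $\|r\|_X\leqslant C\|h\|_{L^\infty}^2$ is obtained exactly as in the paper by testing with $r_t$, invoking the stability estimate of Proposition~\ref{Propstability}, the $L^4$-embedding with Ehrling's lemma, and Gr\"onwall's inequality. The only (cosmetic) difference is that you exploit affineness of the interpolated coefficients to make the Taylor remainders such as $c^2(\varphi+h)-c^2(\varphi)-2c(\varphi)c'(\varphi)h$ vanish identically, whereas the paper keeps these terms in its decomposition \eqref{rv} and estimates them as $O(\|h\|_{L^\infty}^2)$; the kink issue for the extended coefficients at $\varphi\in\{0,1\}$ that you flag affects both versions equally.
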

\begin{proof}
Given $h \in L^\infty(\Omega)$, let \[r=S(\varphi+h)-S(\varphi)-u^*.\]
If we denote $u^{h}=S(\varphi+h)$ and $u=S(\varphi)$, then $r$ solves the following problem:	
\begin{equation}\label{rv}
\begin{aligned}
& \intO	\alpha r_{tt} v \dx+\intO (c^2(\varphi) \nabla r+b(\varphi) \nabla r_t) \cdot \nabla v \dx \\
=&\,\begin{multlined}[t] \intO -\left\{(c^2(\varphi+h)-c^2(\varphi)-2c(\varphi)c'(\varphi)h)\nabla u \right.\\ \left.+ (b(\varphi+h)-b(\varphi)-b'(\varphi)h)\nabla u_t \right\}  \cdot \nabla v\dx \\
-\intO (c^2(\varphi+h)-c^2(h)) \nabla (u^h-u)\\+(b(\varphi+h)-b(\varphi)) \nabla (u^h_t-u_t) )\cdot \nabla v \dx\\
+\intO 2(k(\varphi+h)-k(\varphi)) ((u_t^h)^2-u_t^2) v\dx -\intO 2k(\varphi)(u_t^h-u_t)^2)v \dx \\
+ \intO 2(k(\varphi+h)-k(\varphi)-k'(\varphi)h)u_t^2)v \dx +\intO4k(\varphi) u_t r_t v\dx=\sum_{i=1}^{4}J_{i}\\
\end{multlined}
\end{aligned}
\end{equation}
for all test functions $v \in H^{1}(\Omega)$, a.e.\ in time, supplemented by zero initial conditions. We wish to show that
	 \[\|r\|_X =o(\nLinf{h}) \quad \text{ as }\ \nLinf{h} \rightarrow 0.\]
The proof follows by choosing $v=r_t(t)$ in \eqref{rv}, which belongs to $H^1(\Omega)$ on account of Theorem~\ref{Thm:Wellposedness} and Proposition~\ref{Prop:u_star}. Consequently from \eqref{rv} (after integrating in time) one has for a.e.\ $t\in(0,t):$
\begin{equation}\label{fromrv}
\begin{array}{ll}
&\displaystyle\frac{1}{2}\int_{\Omega}\alpha|r_{t}|^{2}(t)\dx+\frac{1}{2}\int_{\Omega}c^{2}(\varphi)|\nabla r|^{2}(t)\dx+\int_{0}^{t}\int_{\Omega}b(\varphi)|\nabla r_{t}|^{2}\dxs\\
&\displaystyle=\sum_{i=1}^{4}\int_{0}^{t} J_{i}\ds+\frac{1}{2}\int_{0}^{t}\int_{\Omega}\alpha_{t}|r_{t}|^{2}\dxs.
\end{array}
\end{equation}
We can then rely on the following estimates for the terms appearing in the right hand side of \eqref{fromrv}
\begin{equation}
\begin{aligned}
& \bigg|\int_{0}^{t}J_{1}\ds\bigg|=\bigg|\begin{multlined}[t] \inttO -\left\{(c^2(\varphi+h)-c^2(\varphi)-2c(\varphi)c'(\varphi)h)\nabla u \right.\\ \left.+ (b(\varphi+h)-b(\varphi)-b'(\varphi)h)\nabla u_t \right\}  \cdot \nabla r_t \dxs \end{multlined}\bigg| \\
\leqslant&\, C \|h\|_{L^\infty}^2 (\nLtwotLtwo{\nabla u}+\nLtwotLtwo{\nabla u_t})\nLtwotLtwo{\nabla r_t} \\
\leqslant&\, C(\delta, T)\|h\|_{L^\infty}^2\nLtwotLtwo{\sqrt{b(\varphi)}\nabla r_t},
\end{aligned}
\end{equation}	 
where in the last line we have used energy bound \eqref{Energy_bound_nl}. Further, we have
\begin{equation}
\begin{aligned}
&\bigg|\int_{0}^{t}J_{2}\ds\bigg|\\
&=\begin{multlined}[t]
\bigg|\inttO ((c^2(\varphi+h)-c^2(h)) \nabla (u^h-u)\\+(b(\varphi+h)-b(\varphi)) \nabla (u^h_t-u_t) )\cdot \nabla r_t \dxs\bigg|
\end{multlined} \\
&\leqslant\, C \|h\|_{L^\infty} (\nLtwotLtwo{\nabla (u^h-u)}+\nLtwotLtwo{\nabla (u_t^h-u)})\nLtwotLtwo{\nabla r_t}\\
&\leqslant\, C(\delta, T) \|h\|^2_{L^\infty}\nLtwotLtwo{\sqrt{b(\varphi)}\nabla r_t},
\end{aligned}
\end{equation}
on account of continuity of the acoustic pressure with respect to $\varphi$, obtained in Proposition \ref{Propstability}. The $k$ terms can be handled in a similar manner: 
\begin{equation}
\begin{aligned}
&\bigg|\int_{0}^{t}J_{3}\ds\bigg|\\
&=\inttO 2(k(\varphi+h)-k(\varphi)) ((u_t^h)^2-u_t^2) r_{t}\dxs -\inttO 2k(\varphi)(u_t^h-u_t)^2)r_{t} \dxs \\
& \leqslant\, \begin{multlined}[t]
C\|h\|_{L^\infty}\|u^h_t-u_t\|_{L^\infty(L^2)}(\|u^h_t\|_{L^\infty(L^2)}+\|u_t\|_{L^\infty(L^2)})\|r_t\|_{L^2(L^4)} \\
+C \|u^h_t-u_t\|_{L^\infty(L^2)}\|u^h_t-u_t\|_{L^2(L^4)}\|r_t\|_{L^2(L^4)}.
\end{multlined}\\
&\leqslant \, C(\delta,T) \|h\|^{2}_{L^{\infty}}+C(\epsilon)\|\sqrt{\alpha}r_{t}\|^{2}_{L^{2}(L^{2})}+\epsilon\|\sqrt{b(\varphi)}\nabla r_{t}\|^{2}_{L^{2}(L^{2})},
\end{aligned}
\end{equation}
where we have used the energy bound \eqref{Energy_bound_nl}, the stability estimate from Proposition \ref{Propstability} to bound the term $\|u^{h}_{t}-u_{t}\|_{L^{2}(L^{4})}$ by $C(\delta,T)\|h\|_{L^{\infty}},$ Young's inequality and Ehrling's lemma.
Further,
\begin{equation}
\begin{aligned}
&\bigg|\int_{0}^{t}J_{4}\ds\bigg|\\
&=\begin{multlined}[t] \intO 2(k(\varphi+h)-k(\varphi)-k'(\varphi)h)u_t^2)r_t \dx +\intO4k(\varphi) u_t r_t^2\dx \end{multlined} \\
&\leqslant \, C \|h\|^2_{L^\infty}\|u_t\|_{L^\infty(L^2)}\|u_t\|_{L^2(L^4)}\|r_t\|_{L^2(L^4)}+ 4k_f \|u_t\|_{L^\infty(L^2)}\|r_t\|^2_{L^2(L^4)}\\
&\leqslant \, C(\delta,T)\bigg(\|h\|^{4}_{L^{\infty}}+C(\epsilon)\|\sqrt{\alpha}r_{t}\|^{2}_{L^{2}(L^{2})}+\epsilon\|\sqrt{b(\varphi)}\nabla r_{t}\|^{2}_{L^{2}(L^{2})}\bigg).
\end{aligned}
\end{equation}
The final term in the right hand side of \eqref{fromrv} can be estimated as
\begin{equation}\label{finalrv}
\begin{array}{ll}
&\displaystyle \bigg|\frac{1}{2}\int_{0}^{t}\int_{\Omega}\alpha_{t}|r_{t}|^{2}\dxs\bigg|\\
&\displaystyle \leqslant C\left(\|\alpha_{t}\|_{L^{2}}\|\sqrt{\alpha} r_{t}\|^{2}_{L^{2}}+\epsilon\|\alpha_{t}\|_{L^{2}}\|\sqrt{b(\varphi)}\nabla r_{t}\|^{2}_{L^{2}}\right)
\end{array}
\end{equation}
Using the above estimates in \eqref{fromrv} and implementing Gronwall inequality lead to the bound
\[
\|r\|_X \leqslant C \|h\|^2_{L^\infty},
\]
which completes the proof.
\end{proof}
\section{Existence of a minimizer} \label{Sec:ExistenceMinimizer}
We next wish to prove that the minimization problem has a solution and to derive the necessary optimality conditions of first order.
\subsection{Existence of a minimizer}
We begin by proving the existence of a minimizer.
\begin{proposition}
Under the assumptions of Theorem~\ref{Thm:Wellposedness}, the problem \eqref{objective}--\eqref{state_constraint_strong} has a minimizer.
\end{proposition}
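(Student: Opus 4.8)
The plan is to use the direct method of the calculus of variations. Let $\{(u_n, \varphi_n)\}_{n \in \N}$ be a minimizing sequence for $J^\varepsilon$ over the admissible set, so that each $\varphi_n \in \Phi_{\textup{ad}}$, each $u_n = S(\varphi_n) \in U$ solves the state problem \eqref{weakformmainprob}, and $J^\varepsilon(u_n, \varphi_n) \to \inf J^\varepsilon$. Since $J^\varepsilon \geqslant 0$, the infimum is finite and we may assume $J^\varepsilon(u_n,\varphi_n)$ is bounded. First I would extract compactness for the phase fields: the bound on the Ginzburg--Landau energy $\gamma E_\varepsilon(\varphi_n)$ controls $\|\nabla \varphi_n\|_{L^2}$, and the constraint $0 \leqslant \varphi_n \leqslant 1$ controls $\|\varphi_n\|_{L^2}$, so $\{\varphi_n\}$ is bounded in $H^1(\Omega)$. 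Passing to a subsequence (not relabeled), I obtain $\varphi_n \rightharpoonup \varphi$ in $H^1(\Omega)$ and, by the compact embedding $H^1(\Omega) \hookrightarrow\hookrightarrow L^2(\Omega)$ together with Rellich's theorem, $\varphi_n \to \varphi$ strongly in $L^2(\Omega)$ and a.e.\ in $\Omega$. The pointwise limit preserves the box constraint, and since $H^1(\Omega)$ is weakly closed, the limit $\varphi \in \Phi_{\textup{ad}}$.

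Next I would obtain compactness for the states. Because the data $(u_0, u_1, g) = (0,0,g)$ satisfy the smallness condition \eqref{condition_data} uniformly in $n$, Theorem~\ref{Thm:Wellposedness} gives a uniform bound $\|u_n\|_U \leqslant C$ with $C$ independent of $n$. The definition \eqref{defU}--\eqref{norm_U} of $U$ then yields boundedness of $u_n$ in $L^\infty(0,T;H^1(\Omega))$, of $\partial_t u_n$ in $L^\infty(0,T;H^1(\Omega))$, and of $\partial_{tt} u_n$ in $L^2(0,T;L^2(\Omega))$. By Banach--Alaoglu and weak-$*$ compactness I extract a further subsequence with $u_n \overset{*}{\rightharpoonup} u$ in $L^\infty(0,T;H^1(\Omega))$, $\partial_t u_n \overset{*}{\rightharpoonup} u_t$ and $\partial_{tt}u_n \rightharpoonup u_{tt}$ in the respective spaces. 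An Aubin--Lions--Simon argument, using the compact embedding $H^1(\Omega)\hookrightarrow\hookrightarrow L^2(\Omega)$ and the uniform bound on the time derivatives, upgrades this to strong convergence $u_n \to u$ and $\partial_t u_n \to u_t$ in, say, $C([0,T]; L^2(\Omega))$ and in $L^2(0,T;L^4(\Omega))$; the latter is what is needed to pass to the limit in the quadratic damping term.

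The crucial step is then passing to the limit in the weak formulation \eqref{weakformmainprob} to identify $u$ as the state associated with $\varphi$, i.e.\ $u = S(\varphi)$. For the linear principal-part terms I would use that $c^2(\varphi_n), b(\varphi_n) \to c^2(\varphi), b(\varphi)$ strongly (indeed boundedly a.e., by continuity of the interpolation \eqref{interpolated_coefficients_extended} and the a.e.\ convergence of $\varphi_n$, via dominated convergence) and pair this strong coefficient convergence against the weak convergence of $\nabla u_n$ and $\nabla \partial_t u_n$. For the nonlinear right-hand side $2k(\varphi_n)(\partial_t u_n)^2$ I would combine the strong convergence $k(\varphi_n) \to k(\varphi)$ with the strong convergence $(\partial_t u_n)^2 \to u_t^2$ in $L^1$ (or $L^2(0,T;L^2(\Omega))$ using the $L^2(L^4)$ convergence). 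The $\alpha$-term and boundary term pass to the limit directly by weak convergence. By uniqueness in Theorem~\ref{Thm:Wellposedness}, the limit satisfies the state equation for $\varphi$, so $(u,\varphi)$ is admissible.

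Finally I would establish lower semicontinuity of $J^\varepsilon$. The tracking term $\tfrac12 \int_0^T\!\int_D (u_n - u_{\textup{d}})^2$ converges (in fact to its limiting value) using the strong $L^2(0,T;L^2(\Omega))$ convergence of $u_n$ restricted to $D$. The Ginzburg--Landau term $\gamma E_\varepsilon(\varphi_n)$ is weakly lower semicontinuous on $H^1(\Omega)$: the gradient part $\tfrac{\varepsilon}{2}\|\nabla\varphi_n\|_{L^2}^2$ is a continuous convex functional, hence weakly lower semicontinuous, and the potential part $\tfrac1\varepsilon\int_\Omega \Psi(\varphi_n)$ passes to the limit by the a.e.\ convergence together with the constraint (so $\Psi = \Psi_0$ is a bounded continuous function on $[0,1]$, and dominated convergence applies). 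Combining, $J^\varepsilon(u,\varphi) \leqslant \liminf_n J^\varepsilon(u_n,\varphi_n) = \inf J^\varepsilon$, so $(u,\varphi)$ is a minimizer. I expect the main obstacle to be securing \emph{strong} enough convergence of $\partial_t u_n$ to pass to the limit in the quadratic term $2k(\varphi_n)(\partial_t u_n)^2$, since the low spatial regularity means one cannot rely on more than the $U$-bound; the Aubin--Lions compactness in the $L^2(L^4)$ topology, leveraging the compact embedding $H^1 \hookrightarrow\hookrightarrow L^4$ invoked in \eqref{interpolationL4}, is the key that makes this work.
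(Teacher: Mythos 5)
Your proposal is correct and follows essentially the same route as the paper: the direct method with a minimizing sequence, uniform $H^1$ bounds on $\varphi_n$ from the Ginzburg--Landau term plus the box constraint, uniform $U$-bounds on $u_n$ from Theorem~\ref{Thm:Wellposedness}, strong convergence of $\partial_t u_n$ via Aubin--Lions-type compactness to pass to the limit in the quadratic term $2k(\varphi_n)(\partial_t u_n)^2$, and weak lower semicontinuity of $J^\varepsilon$. The only (immaterial) difference is the packaging of the compactness step: you obtain $\partial_t u_n \to u_t$ strongly in $L^2(0,T;L^4(\Omega))$ by Aubin--Lions, while the paper uses the compact embedding of $L^2(0,T;H^1(\Omega)) \cap H^1(0,T;L^2(\Omega))$ into $L^p(\Omega\times(0,T))$, $p>2$, together with a generalized dominated convergence argument in $L^1(0,T;L^3(\Omega))$.
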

\begin{proof}
We first introduce the feasible set as
\begin{equation}
\begin{aligned}
\mathcal{F}_{\textup{ad}} = \left\{(u, \varphi) \in U \times\Phi_{\textup{ad}}: (u, \varphi) \ \ \textup{solves the state problem}\,  \right\};
\end{aligned}
\end{equation}	
cf.~\eqref{admissible_sets}. Since $J^\varepsilon$ is bounded from below, there exists a minimizing sequence $\{(u_j, \varphi_j)\}_{j \in \N} \subset U \times \Phi_{\textup{ad}}$, such that
	\[
	\lim_{j \rightarrow \infty} J^\varepsilon(u_j, \varphi_j)=\inf_{(u, \varphi) \in \mathcal{F}_{\textup{ad}}} J^\varepsilon(u, \varphi).
	\]
	The uniform boundedness of $\left(J^\varepsilon(u_j, \varphi_j)\right)_{j \in \N}$ implies a uniform bound for $\|\nabla \varphi_j\|_{L^2}$. Since $\varphi_{j} \in \Phi_{\textup{ad}}$, we also have that $\|\varphi_j\|_{L^\infty} \leqslant 1$ for all $j \in \N$. Furthermore, Theorem~\ref{Thm:Wellposedness} implies that the sequence $\{u_j\}_{j \in \N}  \subset U$ is bounded as well. Hence there exists $(\overline{u}, \overline{\varphi})$ and a subsequence that we do not relabel, such that
	\begin{equation}
	\begin{aligned}
	u_j &\rightarrow \overline{u} \ &&\text{ weakly in} \ H^1(0,T; H^1(\Omega)), \\
	u_{j,tt} &\rightarrow \overline{u}_{tt} \ &&\text{ weakly in} \ L^2(0,T; L^2(\Omega)), \\
	\varphi_j &\rightarrow \overline{\varphi} \ &&\text{ weakly in} \ H^1(\Omega), \\
		\varphi_j &\rightarrow \overline{\varphi} \ &&\text{ strongly in} \ L^2(\Omega).
	\end{aligned}
	\end{equation}
From the weak lower semi-continuity property of convex functionals and strong convergence of $\{\varphi_j\}_{j \in \N}$ in $L^2(\Omega)$, it follows that
\[
J^\varepsilon(\overline{u}, \overline{\varphi}) \leqslant \lim_{j \rightarrow \infty} J^\varepsilon(u_j, \varphi_j).
\]	
Now in order to show that the couple $(\overline{u},\overline{\varphi})$ solves \eqref{state_constraint_strong} in a weak sense we will pass to the limit as $j \rightarrow \infty$ in 
\begin{equation} 
\begin{aligned}
\begin{multlined}[t]
\intO \alpha(t) u_{j,tt}(t) v \dx +\intO (c^2(\varphi_j) \nabla u_j(t)+b(\varphi_j) \nabla u_{j, t}(t)) \cdot \nabla v \dx \\
= \intO 2k(\varphi_j) u_{j,t}^2(t) v\dx+\sca{g(t)}{v}_{\Gamma},
\end{multlined}
\end{aligned}
\end{equation}
for a.e.\ $t\in(0,T)$ and with $v\in H^{1}(\Omega).$ We will justify this passage here in the relatively complicated term $\displaystyle \intO 2k(\varphi_j) u_{j,t}^2(t) v\dx.$ Passage of the limit in the other summands can be similarly done by using Lebesgue dominated convergence theorem.\\
\indent Since $\{u_{j}\}_{j}$ lies in a bounded set of \[W=L^{2}(0,T;H^{1}(\Omega))\cap H^{1}(0,T;L^{2}(\Omega))\] and $W$ is compactly embedded into $L^{p}(\Omega\times(0,T))$ for some $p>2,$ hence up to a non-relabeled subsequence 
$$k(\varphi_{j})u^{2}_{j,t} \rightarrow  k(\varphi)u_{t}^2\ \ \mbox{a.e.}$$
Now the  generalized Lebesgue theorem furnishes that
 $$k(\varphi_{j})u^{2}_{j,t} \rightarrow  k(\varphi)u_{t}^2\,\,\mbox{in}\,\, L^{1}(0,T;L^{3}(\Omega)),$$
 since $k(\varphi_{j})$ is uniformly bounded in $L^{\infty}(\Omega),$ we know that $u_{j,t}$ is uniformly bounded in $L^{2}(0,T;L^{6}(\Omega)).$ 
 Hence we can pass to the limit in the term \[\displaystyle\int_{0}^{T}\intO 2k(\varphi_j) u_{j,t}^2 v(x)h(t)\dx\dt,\] for all $v\in H^{1}(\Omega)$ and all smooth functions $h=h(t).$ A density argument furnishes the required convergence
 $$\displaystyle \intO 2k(\varphi_j) u_{j,t}^2(t) v\dx \rightarrow  \intO 2k(\varphi) u_{t}^2(t) v\dx,$$
 for a.e.\ $t\in(0,T)$ and with $v\in H^{1}(\Omega).$
 As mentioned, the other terms can be handled in a similar manner, so we omit those details here.\\
 \indent This shows that $(\overline{u}, \overline{\varphi})$ fulfills the state problem. Thus, 
\begin{equation}
\begin{aligned}
- \infty < \inf_{(u, \varphi) \in \mathcal{F}_{\textup{ad}}} J^{\varepsilon}(u, \varphi) \leqslant J^{\varepsilon}(\overline{u}, \overline{\varphi}) \leqslant \lim_{j \rightarrow \infty}  J^{\varepsilon}(u_j, \varphi_j) =\inf_{(u, \varphi) \in \mathcal{F}_{\textup{ad}}} J^{\varepsilon}(u, \varphi),
\end{aligned}
\end{equation}	
which proves that $(\overline{u}, \overline{\varphi})$ is a minimizer of the phase-field problem.
\end{proof}
\subsection{Reduced cost functional}
Let $\varphi \in \Phi_{\textup{ad}}$ and let $u=S(\varphi) \in U$ be the corresponding state. We introduce the reduced cost functional
\begin{equation}\label{reduced_cost}
\begin{aligned}
j_{\varepsilon}(\varphi):=J^\varepsilon(S(\varphi), \varphi)=J_0(S(\varphi))+\gamma E^\varepsilon(\varphi),
\end{aligned}
\end{equation}
where 
\begin{equation}
J_0(u)=\frac{1}{2}\int_0^T \int_D (u-u_d)^2 \, \textup{d}x \textup{d}s.
\end{equation}
We claim that the reduced objective is Fr\'echet differentiable.
\begin{proposition}
Under the assumptions of Theorem~\ref{Thm:Wellposedness}, the reduced cost functional $j_\varepsilon: \Phi_{\textup{ad}} \rightarrow \R$ is Fr\'echet differentiable.
\end{proposition}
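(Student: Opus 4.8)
The plan is to use the decomposition $j_\varepsilon(\varphi)=J_0(S(\varphi))+\gamma E^\varepsilon(\varphi)$ from \eqref{reduced_cost} and to treat the two summands separately: the tracking term by a chain-rule argument resting on the differentiability of the control-to-state map established in Theorem~\ref{Thm:ParameterToState}, and the Ginzburg--Landau term by a direct Taylor expansion. Throughout I would work with directions $h\in L^\infty(\Omega)\cap H^1(\Omega)$ and measure the remainder in the norm of $L^\infty(\Omega)\cap H^1(\Omega)$, which is the natural space attached to $\Phi_{\textup{ad}}$.

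For the tracking term, I would first record that $J_0(u)=\tfrac12\int_0^T\int_D (u-u_\textup{d})^2\,\dxs$ is a continuous quadratic functional on $L^2(0,T;L^2(D))$, hence Fr\'echet differentiable there with
\[
J_0'(u)[v]=\int_0^T\int_D (u-u_\textup{d})\,v\,\dxs.
\]
Since Theorem~\ref{Thm:ParameterToState} gives Fr\'echet differentiability of $S:\Phi_{\textup{ad}}\to X$ with $S'(\varphi)h=u^*$ the solution of \eqref{diff_state} from Proposition~\ref{Prop:u_star}, and since $X\hookrightarrow C([0,T];H^1(\Omega))\hookrightarrow L^2(0,T;L^2(D))$ continuously, the chain rule yields Fr\'echet differentiability of $\varphi\mapsto J_0(S(\varphi))$ with
\[
(J_0\circ S)'(\varphi)[h]=\int_0^T\int_D (S(\varphi)-u_\textup{d})\,u^*\,\dxs.
\]
The only delicate point here is the mismatch of spaces: $S$ is differentiable as a map into $X$ rather than $U$, but $J_0$ is insensitive to the missing regularity because it sees only the $L^2$-in-space-time norm on $D$, which $X$ already controls; concretely, the remainder $\|S(\varphi+h)-S(\varphi)-u^*\|_X=O(\|h\|_{L^\infty}^2)$ from the proof of Theorem~\ref{Thm:ParameterToState} transfers directly to an $O(\|h\|_{L^\infty}^2)$ bound for the tracking term.

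For the regularization term, the apparent difficulty is the double-obstacle potential $\Psi$, which equals $+\infty$ off $[0,1]$ and is thus not differentiable as a functional on $H^1(\Omega)$. This obstacle is, however, harmless once one restricts to $\Phi_{\textup{ad}}$: there $0\le\varphi\le1$ a.e., so $\Psi(\varphi)=\Psi_0(\varphi)=\tfrac12\varphi(1-\varphi)$ is a smooth polynomial, and
\[
E^\varepsilon(\varphi)=\int_\Omega \frac{\varepsilon}{2}|\nabla\varphi|^2+\frac{1}{\varepsilon}\Psi_0(\varphi)\,\dx
\]
extends to a Fr\'echet differentiable functional on $H^1(\Omega)$. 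Indeed, the gradient term is a bounded quadratic form, and for the potential term the Taylor remainder is exactly $\tfrac1\varepsilon\int_\Omega\bigl(\Psi_0(\varphi+h)-\Psi_0(\varphi)-\Psi_0'(\varphi)h\bigr)\dx=-\tfrac{1}{2\varepsilon}\int_\Omega h^2\,\dx=O(\|h\|_{L^2}^2)=o(\|h\|_{H^1})$, so
\[
\langle (E^\varepsilon)'(\varphi),h\rangle=\int_\Omega \varepsilon\,\nabla\varphi\cdot\nabla h+\frac{1}{\varepsilon}\Psi_0'(\varphi)h\,\dx.
\]

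Adding the two contributions establishes the Fr\'echet differentiability of $j_\varepsilon$, with the combined remainder bounded by $O(\|h\|_{L^\infty}^2)+O(\|h\|_{L^2}^2)=o(\|h\|_{L^\infty\cap H^1})$. I expect the main obstacle to be purely the functional-analytic bookkeeping of the chain rule, namely verifying that the composition is carried out between compatible spaces ($S$ into $X$, $J_0$ on its $L^2(L^2(D))$ image) and that the two remainder scales, measured respectively in $\|\cdot\|_{L^\infty}$ and $\|\cdot\|_{H^1}$, can be merged into a single $o$-estimate in the $L^\infty\cap H^1$ norm; the non-smoothness of the obstacle potential, which at first glance threatens the argument, disappears upon restriction to $\Phi_{\textup{ad}}$.
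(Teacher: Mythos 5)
Your proposal is correct and follows essentially the same route as the paper: both split $j_\varepsilon$ into the tracking term and the Ginzburg--Landau term, differentiate these quadratic functionals, and combine this with the Fr\'echet differentiability of $S$ from Theorem~\ref{Thm:ParameterToState} via the chain rule, arriving at the same formula $j_\varepsilon'(\varphi)h=J_{0,u}(u)\,u^*+\gamma E^\varepsilon_\varphi(\varphi)h$ with $u^*=S'(\varphi)h$. The only difference is one of technique rather than substance: the paper verifies Fr\'echet differentiability of $J^\varepsilon$ by checking continuity of the formally computed directional derivatives, whereas you use exact Taylor expansions of the quadratic pieces and are more explicit about two points the paper glosses over, namely that the obstacle potential $\Psi$ reduces to the smooth $\Psi_0$ on $\Phi_{\textup{ad}}$, and that the $O(\|h\|_{L^\infty}^2)$ remainder from the state map and the $O(\|h\|_{H^1}^2)$ remainder from the Ginzburg--Landau energy merge into a single $o$-estimate in the norm of $L^\infty(\Omega)\cap H^1(\Omega)$.
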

\begin{proof}
The proof follows analogously to the proof of \cite[Lemma 4.2]{blank2014relating}. We first show that $J^\varepsilon: U \times \Phi_{\textup{ad}} \rightarrow \R$ is Fr\'echet differentiable. Formally, the partial derivatives of $J^\varepsilon$ at $(u, \varphi)$ in the direction $(v, h)$ are given by
\begin{equation}
\begin{aligned}
J^\varepsilon_{u}=\, J_{0, u}(u)v,\quad
J^\varepsilon_{\varphi}=\, \gamma E^\varepsilon_{\varphi}(\varphi)h,
\end{aligned}
\end{equation}
with	
\begin{equation}
\begin{aligned}
J_{0, u}(u)v =&\, \int_0^T \int_D (u-u_d)v\, \textup{d}x \textup{d}s, \\
E^\varepsilon_{\varphi}(\varphi)h=&\, \varepsilon \intO \nabla \varphi \cdot \nabla h \dx+ \frac{1}{\varepsilon} \Psi'(\varphi) h \dx.
\end{aligned}
\end{equation}
In order to show the desired Fr\'echet differentiability, it is enough to prove the continuities of the formally computed directional derivatives. To this end, let $\{(u_n, \varphi_n)\} \subset U \times \Phi_{\textup{ad}}$ be a given sequence, such that 
\begin{equation}
(u_n, \varphi_n) \rightarrow (u, \varphi) \text{ in} \ U \times \Phi_{\textup{ad}} \ \text{ as} \ n \rightarrow \infty.
\end{equation}
Then there is a subsequence, which we do not relabel, such that 
\begin{equation}
(u_n, \varphi_n) \rightarrow (u, \varphi) \text{ a.e.\ in } \ \Omega \times (0,T).
\end{equation}
Then
\begin{equation}
\begin{aligned}
\left|J_{0, u}v-J_{0, u_n}v\right|=&\, \left| \int_0^T \int_D (u-u_n)v\, \textup{d}x \textup{d}s \right|\\
 \leqslant&\, \nLtwoLtwo{u-u_n}\nLtwoLtwo{v} \rightarrow 0 \text{ as } n \rightarrow \infty,
\end{aligned}
\end{equation}
and continuity of $E^\varepsilon_{\varphi}(\varphi)h$ follows similarly. Therefore by chain rule
\begin{equation}
\begin{aligned}
j_\varepsilon'(\varphi)h=J_{0, u}(u, \varphi) u^*+J^\varepsilon_{\varphi}(u, \varphi)h,
\end{aligned}
\end{equation}
where $u^*=S'(\varphi) h$; cf. Theorem~\ref{Thm:ParameterToState}.
\end{proof}
\section{Analysis of the adjoint problem} \label{Sec:Adjoint}
We next proceed to studying the adjoint problem, which is is formally given by
\begin{equation} \label{adjoint}
\begin{aligned}
\begin{multlined}[t]
\sca{\alpha(t) p_{tt}}{v}_{\Omega} +\intO (c^2(\varphi) \nabla p(t)-b(\varphi) \nabla p_t) \cdot \nabla v \dx \\
= -\intO ((4k(\varphi)u_{tt}+\alpha_{tt})p+2(\alpha_{t}+2k(\varphi)u_t)p_t) v\dx + \int_D (u-u_d)v \dxs ,
\end{multlined}
\end{aligned}
\end{equation}
for all $v \in H^1(\Omega)$ a.e.\ in time, with $(p, p_t)\vert_{t=T}=(0,0)$. Under an additional regularity assumption on the coefficient $\alpha$, this problem is well-posed.
\begin{proposition} \label{Prop:AdjointWellP}
	Let assumptions of Theorem~\ref{Thm:Wellposedness} hold. Further, let \[\alpha_{tt} \in L^2(0,T; L^2(\Omega)).\] Then there exists a unique $p \in H^1(0,T; H^1(\Omega))$, which solves \eqref{adjoint}.
\end{proposition}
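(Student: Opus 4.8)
The plan is to read \eqref{adjoint} as a \emph{backward} linear strongly damped wave equation and to turn it into a forward problem by the time reversal $\tau = T-t$, $q(\tau)=p(T-\tau)$, writing $\tilde\alpha(\tau)=\alpha(T-\tau)$, $\tilde u(\tau)=u(T-\tau)$, and so on. Since $p_t=-q_\tau$ and $p_{tt}=q_{\tau\tau}$, the damping term $-b(\varphi)\nabla p_t$ becomes $+b(\varphi)\nabla q_\tau$ and the terminal data $(p,p_t)\vert_{t=T}=(0,0)$ become initial data $(q,q_\tau)\vert_{\tau=0}=(0,0)$. The reversed problem is therefore a forward strongly damped wave equation with \emph{positive} damping, zero initial data, and a right-hand side that is linear in $(q,q_\tau)$ with coefficients built from $\tilde u$, $\tilde\alpha$, and $\varphi$. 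Because the problem is linear, it suffices to establish a single a priori energy estimate: existence then follows from a standard Faedo--Galerkin procedure exactly as in Proposition~\ref{Prop:LinWave} (the pairing with $p_{tt}$ being understood, after one integration by parts in time, in the weak sense appropriate to $H^1(0,T;H^1(\Omega))$), and uniqueness follows by applying the same estimate to the difference of two solutions.

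The central a priori estimate would be obtained by testing the semi-discrete reversed equation with $v=q_\tau$. The principal part yields $\tfrac12\ddt\|\sqrt{\tilde\alpha}q_\tau\|^2+\tfrac12\ddt\|c(\varphi)\nabla q\|^2+\|\sqrt{b(\varphi)}\nabla q_\tau\|^2$ up to the commutator $\tfrac12\int\tilde\alpha_\tau|q_\tau|^2$, just as in \eqref{identity_linwellp} and \eqref{reformafttestut}. The zeroth-order source $\int_D(\tilde u-\tilde u_\textup{d})q_\tau$ is controlled by $\nLtwotLtwo{\tilde u-\tilde u_\textup{d}}$ and Young's inequality, since $u\in U$ and $u_\textup{d}\in L^2(0,T;L^2(\Omega))$. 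The damping-type contributions $\int\tilde\alpha_\tau|q_\tau|^2$ and $\int k(\varphi)\tilde u_\tau|q_\tau|^2$ are estimated precisely as $I_5$ in \eqref{I5} and $I_3$ in \eqref{I3}: using $\tilde\alpha_\tau\in L^\infty(0,T;L^2(\Omega))$, $\tilde u_\tau\in L^\infty(0,T;H^1(\Omega))\hookrightarrow L^\infty(0,T;L^4(\Omega))$, the embedding \eqref{interpolationL4}, and Young's inequality, each is bounded by a term of the form $C(\epsilon)(\|\tilde\alpha_\tau\|_{L^2}+\|\tilde u_\tau\|_{L^4}^2+1)\|\sqrt{\tilde\alpha}q_\tau\|_{L^2}^2+\epsilon\|\sqrt{b(\varphi)}\nabla q_\tau\|_{L^2}^2$, so the gradient part is absorbed into the dissipation and the remaining coefficient is bounded in time.

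The step I expect to be the main obstacle is the treatment of the two coefficients carrying only $L^2$-in-time regularity, namely $4k(\varphi)\tilde u_{\tau\tau}$ and $\tilde\alpha_{\tau\tau}$, which multiply $q$ in the source and thus produce $\int(4k(\varphi)\tilde u_{\tau\tau}+\tilde\alpha_{\tau\tau})\,q\,q_\tau$ after testing. Here I would place \emph{both} factors into $L^4(\Omega)$ via the compact embedding $H^1(\Omega)\hookrightarrow L^4(\Omega)$, obtaining
\begin{equation*}
\Big|\intO (4k(\varphi)\tilde u_{\tau\tau}+\tilde\alpha_{\tau\tau})\,q\,q_\tau\dx\Big|
\leqslant C\big(\|\tilde u_{\tau\tau}\|_{L^2}+\|\tilde\alpha_{\tau\tau}\|_{L^2}\big)\,\nHone{q}\,\nHone{q_\tau},
\end{equation*}
and then split by Young's inequality into $\epsilon\,\nHone{q_\tau}^2$, whose gradient part is absorbed into $\|\sqrt{b(\varphi)}\nabla q_\tau\|^2$ and whose $L^2$ part enters Gronwall's lemma, plus $C(\epsilon)\big(\|\tilde u_{\tau\tau}\|_{L^2}^2+\|\tilde\alpha_{\tau\tau}\|_{L^2}^2\big)\nHone{q}^2$. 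The crucial point is that the extra hypothesis $\alpha_{tt}\in L^2(0,T;L^2(\Omega))$ together with $u\in U$ makes $\|\tilde u_{\tau\tau}\|_{L^2}^2+\|\tilde\alpha_{\tau\tau}\|_{L^2}^2$ integrable in time, so it is an admissible $L^1$-weight in Gronwall's inequality; this is exactly why the additional regularity on $\alpha$ is imposed.

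Collecting the estimates, integrating over $(0,\tau)$, and applying Gronwall's lemma yields a uniform bound on $\sup_\tau\|q_\tau\|_{L^2}^2+\sup_\tau\|\nabla q\|_{L^2}^2+\|\nabla q_\tau\|_{L^2(L^2)}^2$; combined with $\|q(\tau)\|_{L^2}\leqslant\int_0^\tau\|q_\sigma\|_{L^2}\ds$ this gives $q\in L^\infty(0,T;H^1(\Omega))$ with $q_\tau\in L^2(0,T;H^1(\Omega))$, that is, $p\in H^1(0,T;H^1(\Omega))$. Notably, no second test function $q_{\tau\tau}$ is required here, since the target regularity is lower than that of the space $U$. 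Passing to the limit in the Galerkin scheme (weak convergence suffices for all the linear terms, and the fixed coefficients involving $\tilde u$ and $\tilde\alpha$ pass by weak--strong convergence) and undoing the time reversal completes the existence proof; uniqueness follows by applying the same energy estimate to the difference of two solutions, which solves the homogeneous problem with zero data.
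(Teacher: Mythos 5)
Your proposal is correct and follows essentially the same route as the paper's proof: time reversal to a forward problem with positive damping, testing with the time derivative of the adjoint state, placing the $L^2$-in-time coefficients $u_{tt}$ and $\alpha_{tt}$ in $L^2(\Omega)$ against $p,p_t\in L^4(\Omega)$ via $H^1(\Omega)\hookrightarrow L^4(\Omega)$, absorbing gradient terms into the dissipation, and closing with Gr\"onwall's inequality with an integrable-in-time weight, followed by a Galerkin argument. The only cosmetic difference is that the paper controls $\|p\|_{L^2}$ by adding $\frac12\ddt\|p\|_{L^2}^2$ to both sides of the energy identity, whereas you use the fundamental theorem of calculus; both are standard and equivalent.
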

\begin{proof}
Similarly to before, in the following we will show only a priori estimates towards the proof of existence of weak solutions for the formally derived adjoint problem. The uniform estimates we present here can be combined with classical approximation arguments (Galerkin ansatz) to provide a more rigorous proof of the proposition.\\
\indent	Before obtaining the uniform estimates we will re-write the system \eqref{adjoint} as an initial value problem. In that direction we apply the transformation $t\mapsto T-t,$ use the same notations $\alpha,$ $p,$ $v,$ $c(\varphi),$ $b(\varphi),$ $k(\varphi),$ $u$, and $u_{d}$ to denote the transformed quantities to furnish
	 \begin{equation} \label{adjoint*}
	 \begin{aligned}
	 \begin{multlined}[t]
	 \sca{\alpha(t) p_{tt}}{v}_{\Omega} +\intO (c^2(\varphi) \nabla p(t)+b(\varphi) \nabla p_t) \cdot \nabla v \dx \\
	 = -\intO ((4k(\varphi)u_{tt}+\alpha_{tt})p+2(-\alpha_{t}-2k(\varphi)u_t)\cdot(-p_t)) v\dx \\+ \int_D (u-u_d)v \dx ,
	 \end{multlined}
	 \end{aligned}
	 \end{equation}
	 for all $v \in H^1(\Omega)$, with $(p, p_t)\vert_{t=0}=(0,0)$. We choose $v=p_{t}$ in \eqref{adjoint*} and add $\displaystyle\frac{1}{2}\ddt\int_{\Omega}|p|^{2}\dx=\int_{\Omega}pp_{t} \dx$ to both sides to obtain
	 \begin{equation}\label{testingadj}
	 \begin{aligned}
	 &\begin{multlined}[t]\displaystyle\frac{1}{2}\ddt\int_{\Omega}\alpha|p_{t}|^{2}\dx+\frac{1}{2}\ddt\int_{\Omega}c^{2}(\varphi)|\nabla p|^{2}\dx+\frac{1}{2}\ddt\int_{\Omega}|p|^{2}\dx\\+\int_{\Omega}b(\varphi)|\nabla p_{t}|^{2}\dx \end{multlined}\\
	 &\displaystyle=\begin{multlined}[t]-\int_{\Omega}4k(\varphi)u_{tt}pp_{t}\dx-\int_{\Omega}\alpha_{tt}pp_{t}\dx-\int_{\Omega}2\alpha_{t}p_{t}^2\dx-\int_{\Omega}4k(\varphi)u_{t}|p_{t}|^{2}\dx\\+\int_D (u-u_d)p_t\dx
	 \displaystyle +\frac{1}{2}\int_{\Omega}\alpha_{t}|p_{t}|^{2}\dxs+\int_{\Omega}pp_{t} \dx=\sum_{i=1}^{7} J_{i}.\end{multlined}
	 \end{aligned}
	 \end{equation}
	 Integrating \eqref{testingadj} in $(0,\tau)$ for a.e.\ $\tau\in(0,T)$ we obtain
	 \begin{equation}\label{testingadjint}
	 \begin{aligned}
	 &\begin{multlined}[t]\frac{1}{2}\int_{\Omega}\alpha|p_{t}|^{2}(\tau)\dx+\frac{1}{2}\int_{\Omega}c^{2}(\varphi)|\nabla p|^{2}(\tau)\dx+\frac{1}{2}\int_{\Omega}|p|^{2}(\tau)dx\\
	 +\int_{0}^{\tau}\int_{\Omega}b(\varphi)|\nabla p_{t}|^{2}\dxs \end{multlined}\\
	 &=\begin{multlined}[t]-\int_{0}^{\tau}\int_{\Omega}4k(\varphi)u_{tt}pp_{t}\dxs-\int_{0}^{\tau}\int_{\Omega}\alpha_{tt}pp_{t}\dxs+\int_{0}^{\tau}\int_{\Omega}2\alpha_{t}p_{t}^2\dxs\\-\int_{0}^{\tau}\int_{\Omega}4k(\varphi)u_{t}|p_{t}|^{2}\dxs
	 \displaystyle+\int_{0}^{\tau}\int_D (u-u_d)p_t\dx
	  +\frac{1}{2}\int_{0}^{\tau}\int_{\Omega}\alpha_{t}p_{t}^{2}\dxs	\\  +\int_{0}^{\tau}\int_{\Omega}pp_{t}\dxs=\sum_{i=1}^{7} \int_{0}^{\tau}J_{i}\ds.\end{multlined}
	 \end{aligned}
	 \end{equation}
	 In the following we estimate $\displaystyle\int_{0}^{\tau} J_{i} \ds$. The generic constants $C$ in the following inequalities might differ from one line to another and they can depend on  $\|(u_{0},u_{1})\|_{H^{1}(\Omega)\times H^{1}(\Omega)},$ $\|g\|_{H^1(H^{-1/2})},$ $\underline{\alpha},$ $\overline{\alpha},$ $c_{i},$ $k_{i}$, $b_{i}$ for $i\in\{f,l\}$ and the final time $T.$\\
	 \indent We start by estimating $\displaystyle\int_{0}^{\tau} J_{1} \ds$. 	 
	 We will rely on Ehrling's lemma for the following estimates. 
	 \begin{equation}\label{J1}
	 \begin{aligned}
	 \displaystyle\bigg|\int_{0}^{\tau} J_{1}  \ds\bigg|
	 &\displaystyle\leqslant C\int_{0}^{\tau}\|u_{tt}\|_{L^{2}}\|p\|_{L^{4}}\|p_{t}\|_{L^{4}}\ds\\
	 &\displaystyle \leqslant C\int_{0}^{\tau}\|u_{tt}\|^{2}_{L^{2}}\|p\|^{2}_{L^{4}}\ds+C\int_{0}^{\tau}\|p_{t}\|^{2}_{L^{4}}\ds\\
	 &\displaystyle \leqslant C\|p\|^{2}_{L^{\infty}(L^{4})}\|u_{tt}\|^{2}_{L^{2}(L^{2})}+C\|p_{t}\|^{2}_{L^{2}(0,\tau;L^{4})}\\
	 & \displaystyle \leqslant C\|u_{tt}\|^{2}_{L^{2}(L^{2})}\bigg(\|p\|^{2}_{L^{\infty}(L^{2})}+\|{c(\varphi)}\nabla p\|^{2}_{L^{\infty}(L^{2})}\bigg)\\
	 &\displaystyle \qquad +C\left(C(\epsilon)\|\sqrt{\alpha}p_t\|^2_{L^2(L^{2})}+\epsilon\|\sqrt{b(\varphi)}\nabla p_t\|^2_{L^2(L^{2})}\right).
	 \end{aligned}
	 \end{equation}
	 Similarly, $\displaystyle\int_{0}^{\tau}J_{2}\, \textup{d}\tau$ can be estimated to have
	\begin{equation}\label{J2}
	\begin{array}{ll}
	&\displaystyle\bigg|\int_{0}^{\tau} J_{2} \ds\bigg|\\
	& \displaystyle \leqslant C\|\alpha_{tt}\|^{2}_{L^{2}(L^{2})}\bigg(\|p\|^{2}_{L^{\infty}(L^{2})}+\|{c(\varphi)}\nabla p\|^{2}_{L^{\infty}(L^{2})}\bigg)\\
	&\displaystyle \qquad +C\left(C(\epsilon)\|\sqrt{\alpha}p_t\|^2_{L^2(L^{2})}+\epsilon\|\sqrt{b(\varphi)}\nabla p_t\|^2_{L^2(L^{2})}\right).
	\end{array}
	\end{equation} 
for some positive parameter $\epsilon.$ The term $\displaystyle\int\limits_{0}^{\tau}J_{3}$ is estimated as follows:
\begin{equation}\label{J3}
\begin{array}{ll}
\displaystyle\bigg|\int_{0}^{\tau} J_{3} \ds\bigg|
&\displaystyle \leqslant \|\alpha_{t}\|_{L^{\infty}(L^{2})}\|p_{t}\|^{2}_{L^{2}(L^{4})}\\
&\displaystyle \leqslant C\bigg(\|\alpha_{t}\|_{L^{\infty}(L^{2})}\|\sqrt{\alpha}p_{t}\|^{2}_{L^{2}(L^{2})} +\epsilon\|\alpha_{t}\|_{L^{\infty}(L^{2})}\|\sqrt{b(\varphi)}\nabla p_{t}\|^{2}_{L^{2}(L^{2})}\bigg).
\end{array}
\end{equation}
Since $k(\varphi)\in L^{\infty}(\Omega),$ the term $\displaystyle \int\limits_{0}^{\tau}J_{4}$ can exactly be estimated as $\displaystyle \int\limits_{0}^{\tau}J_{3}$, only by replacing the role of $\alpha_{t}$ by $u_{t}.$
The term $\displaystyle \int\limits_{0}^{\tau}J_{5}$ can be easily estimated as follows:
\begin{equation}
\begin{array}{l}
\displaystyle\bigg|\int_{0}^{\tau}J_{5} \ds\bigg|\leqslant C\bigg(\|(u-u_{d})\|^{2}_{L^{2}(L^{2})}+\|\sqrt{\alpha}p_{t}\|^{2}_{L^{2}(L^{2})}\bigg)
\end{array}
\end{equation}
The term $\displaystyle\int\limits_{0}^{\tau}J_{6}$ admits similar estimate as that of  $\displaystyle\int\limits_{0}^{\tau}J_{3}.$\\
Next $\displaystyle\int_{0}^{\tau} J_{7}$ is estimated as
$$\displaystyle\bigg|\int_{0}^{\tau}J_{7} \ds\bigg|\leqslant C\bigg(\|p\|^{2}_{L^{2}(L^{2})}+\|\sqrt{\alpha}p_{t}\|^{2}_{L^{2}(L^{2})}\bigg)$$
Combining all the above estimates for $\displaystyle\int\limits_{0}^{\tau}J_{i}$ ($i=1,..,7$) together with \eqref{testingadjint}, and using Gr\"{o}nwall's inequality completes the proof of Proposition~\ref{Prop:AdjointWellP}.
\end{proof}
\section{First-order necessary optimality conditions} \label{Sec:FirstOrderOpt}
We next wish to derive the first-order optimality conditions. To this end, let $\varphi \in \Phi_{\textup{ad}}$ be the minimizer of the optimal control problem \eqref{objective}--\eqref{state_constraint_strong} and $u=S(\varphi)$ the associated state variable. With the reduced cost function $j_\varepsilon$ given in \eqref{reduced_cost}, the optimization problem can be reformulated as follows:
\begin{equation} \label{reduced_problem}
\min_{\varphi \in \Phi_{\textup{ad}}} j_\varepsilon(\varphi).
\end{equation}
\begin{proposition} Let the assumptions of Theorem~\ref{Thm:Wellposedness} and Proposition~\ref{Prop:AdjointWellP} hold. Let $u^* \in X$ be the solution to \eqref{diff_state} and let $p \in H^1(0,T; H^1(\Omega))$ be the adjoint state defined as the weak solution to \eqref{adjoint}. Then
\begin{equation}\label{J_u_diff}
\begin{aligned}
	J^\varepsilon_{u}(u, \varphi) u^* =&\, \begin{multlined}[t]-
\int_0^T\intO (2c(\varphi)c'(\varphi)h \nabla u(t)+b'(\varphi)h \nabla u_t(t)) \cdot \nabla p \dxs\\+\int_0^T\intO 2k'(\varphi)h u_t^2(t) p\dxs. \end{multlined}
\end{aligned}
\end{equation}
\end{proposition}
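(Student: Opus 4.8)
The identity \eqref{J_u_diff} is the standard adjoint relation, and the plan is to derive it by testing the linearized state system and the adjoint system against each other and reorganizing through integration by parts in time. Recall from the computation of the reduced cost functional that $J^\varepsilon_u(u,\varphi)u^* = \int_0^T\int_D (u-u_d)u^*\dxs$, so the task is to re-express this tracking term using the adjoint state $p$. First I would test the adjoint equation \eqref{adjoint} with $v=u^*(t)$ and integrate over $(0,T)$; this writes $\int_0^T\int_D (u-u_d)u^*\dxs$ as the sum of the inertial term $\int_0^T\sca{\alpha p_{tt}}{u^*}\ds$, the spatial bilinear terms $\int_0^T\intO (c^2(\varphi)\nabla p - b(\varphi)\nabla p_t)\cdot\nabla u^*\dxs$, and the lower-order contributions $\int_0^T\intO \big((4k(\varphi)u_{tt}+\alpha_{tt})p + 2(\alpha_t+2k(\varphi)u_t)p_t\big)u^*\dxs$. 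In parallel I would test the linearized state equation \eqref{diff_state} with $v=p(t)$ and integrate over $(0,T)$; after rearrangement this expresses the target right-hand side, call it $R$, as $R=\int_0^T\sca{\alpha u^*_{tt}}{p}\ds + \int_0^T\intO (c^2(\varphi)\nabla u^* + b(\varphi)\nabla u^*_t)\cdot\nabla p\dxs - \int_0^T\intO 4k(\varphi)u_t u^*_t p\dxs$.

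Second, I would substitute the first identity and match terms against $R$. The symmetric stiffness terms in $c^2(\varphi)$ coincide and drop out immediately. The damping terms cancel after one integration by parts in time: using $p(T)=0$ and $u^*(0)=0$ one gets $\int_0^T\intO b(\varphi)\nabla u^*_t\cdot\nabla p\dxs = -\int_0^T\intO b(\varphi)\nabla p_t\cdot\nabla u^*\dxs$, so the adjoint damping contribution is exactly the one appearing in $R$. For the inertial term I would integrate $\int_0^T\sca{\alpha p_{tt}}{u^*}\ds$ by parts twice in time; since $p_t(T)=0$, $p(T)=0$, $u^*(0)=0$ and $u^*_t(0)=0$, all boundary contributions vanish and one is left with $\int_0^T\sca{\alpha u^*_{tt}}{p}\ds$ together with the correction $-\int_0^T\intO \alpha_t p_t u^*\dxs + \int_0^T\intO \alpha_t p u^*_t\dxs$, where the $\alpha_t$ factors arise because $\alpha$ is time dependent.

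Third, the remaining $\alpha$- and $k$-terms must be shown to vanish, which is where the design of the adjoint pays off. Collecting the two inertial corrections with the adjoint's explicit $\alpha_{tt}p$ and $2\alpha_t p_t$ contributions, the net coefficient of $\alpha_t p_t u^*$ becomes $+1$, so the total $\alpha$-contribution is $\int_0^T\intO \big(\alpha_{tt}p u^* + \alpha_t p_t u^* + \alpha_t p u^*_t\big)\dxs = \int_0^T\intO \tfrac{\partial}{\partial t}(\alpha_t p u^*)\dxs$, whose boundary term vanishes by $p(T)=0$ and $u^*(0)=0$. For the nonlinearity, the surviving pieces are $\int_0^T\intO 4k(\varphi)\big(u_{tt}p u^* + u_t p_t u^* + u_t u^*_t p\big)\dxs = \int_0^T\intO 4k(\varphi)\tfrac{\partial}{\partial t}(u_t p u^*)\dxs$, whose boundary term again vanishes by the complementary conditions $p(T)=0$, $u^*(0)=0$. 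What remains is precisely $R$, yielding \eqref{J_u_diff}.

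The main obstacle is the rigour of the temporal integration by parts. Both $u^*_{tt}$ and $p_{tt}$ are available only in a duality sense: Proposition~\ref{Prop:u_star} gives $u^*\in X$ and Proposition~\ref{Prop:AdjointWellP} gives $p\in H^1(0,T;H^1(\Omega))$, with no a priori second-time-derivative bound in $L^2(L^2)$, so the pairings $\sca{\alpha p_{tt}}{u^*}$ and $\sca{\alpha u^*_{tt}}{p}$ cannot be manipulated pointwise in time. I would therefore carry out all of the above identities at the level of the Galerkin approximations, where the discrete solutions are smooth in time and every integration by parts and boundary evaluation is legitimate, and then pass to the limit using the convergences underlying Proposition~\ref{Prop:u_star} and Proposition~\ref{Prop:AdjointWellP}. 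The assumption $\alpha_{tt}\in L^2(0,T;L^2(\Omega))$ is exactly what keeps the pairing $\int_0^T\intO \alpha_{tt}p u^*\dxs$ finite and the whole computation meaningful.
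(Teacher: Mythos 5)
Your proof is correct and follows essentially the same route as the paper: test the adjoint equation with $u^*$ and the linearized state equation \eqref{diff_state} with $p$, integrate by parts in time using $(u^*,u^*_t)\vert_{t=0}=(0,0)$ and $(p,p_t)\vert_{t=T}=(0,0)$, and observe that the leftover $\alpha$- and $k$-terms combine into exact time derivatives $\partial_t(\alpha_t p u^*)$ and $4k(\varphi)\,\partial_t(u_t p u^*)$ whose boundary contributions vanish. Your closing remark on carrying out the integrations by parts at the Galerkin level is a sensible rigor supplement that the paper's own (more formal) proof leaves implicit.
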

\begin{proof}
	Testing the adjoint problem with $u^* \in X$ gives
	\begin{equation} 
	\begin{aligned}
	\begin{multlined}[t]
	\int_0^T\sca{\alpha(t) p_{tt}}{u^*}_{\Omega}\ds +\int_0^T\intO (c^2(\varphi) \nabla p(t)-b(\varphi) \nabla p_t) \cdot \nabla u^* \dxs\\
	+\int_0^T\intO \big\{(4k(\varphi)u_{tt}+\alpha_{tt})p+2(\alpha_{t}+2k(\varphi)u_t)p_t\big\} u^*\dxs=		J^\varepsilon_{u}(u, \varphi) u^*.
	\end{multlined}
	\end{aligned}
	\end{equation}
	Integration by parts with respect to time then leads to
		\begin{equation} 
	\begin{aligned}
	\begin{multlined}[t]
	\int_0^T\sca{\alpha(t) u^*_{tt}}{p}_{\Omega}\ds +\int_0^T\intO (c^2(\varphi) \nabla u^*(t)+b(\varphi) \nabla u^*_t(t)) \cdot \nabla p \dxs\\
	+\int_0^T\intO 4k(\varphi)\left\{u_{tt}p+u_tp_t\right\} u^*\dxs=		J^\varepsilon_{u}(u, \varphi) u^*.
	\end{multlined}
	\end{aligned}
	\end{equation}
Using the fact that $u^*$ satisfies the weak form \eqref{diff_state} then yields
\begin{equation}
\begin{aligned}
&\begin{multlined}[t]
-\int_0^T\intO 4k(\varphi)\left\{u_{tt}p+u_tp_t\right\} u^*\dxs+		J^\varepsilon_{u}(u, \varphi) u^* \end{multlined}\\
=&\, \begin{multlined}[t]-\int_0^T\intO (2c(\varphi)c'(\varphi)h \nabla u(t)+b'(\varphi)h \nabla u_t(t)) \cdot \nabla p \dxs \\+\int_0^T\intO 4k(\varphi) u_t u_t^*(t) p\dxs+\int_0^T\intO 2k'(\varphi)h u_t^2(t) p\dxs,  \end{multlined}
\end{aligned}
\end{equation}	
from which \eqref{J_u_diff} follows by integration by parts with respect to time in the first term on the left.
\end{proof}	
\begin{proposition}
Let $\varphi \in \Phi_{\textup{ad}}$ be the solution to \eqref{reduced_problem}. Then
\[
j_\varepsilon'(\varphi) (\tilde{\varphi}-\varphi) \geq 0 \qquad \forall \tilde{\varphi} \in \Phi_{\textup{ad}},
\]
where
\begin{equation}
\begin{aligned}
j_\varepsilon'(\varphi)(\tilde{\varphi}-\varphi)
=&\,\begin{multlined}[t] J^{\varepsilon}_{\varphi}(u, \varphi)(\tilde{\varphi}-\varphi)\\-\int_0^T\intO (2c(\varphi)c'(\varphi)(\tilde{\varphi}-\varphi)  \nabla u(t)+b'(\varphi)(\tilde{\varphi}-\varphi)  \nabla u_t(t)) \cdot \nabla p \dxs\\+\int_0^T\intO 2k'(\varphi)(\tilde{\varphi}-\varphi)  u_t^2(t) p\dxs
\end{multlined}
\end{aligned}
\end{equation}
is the directional derivative of the reduced objective functional $j_\varepsilon$.
\end{proposition}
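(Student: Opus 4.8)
The plan is to read off the stated identity and inequality by assembling three facts already available: the convexity of $\Phi_{\textup{ad}}$, the Fr\'echet differentiability of the reduced functional $j_\varepsilon$, and the adjoint representation \eqref{J_u_diff} of the preceding proposition. The variational inequality is the standard first-order optimality condition for minimizing a differentiable functional over a convex set, while the explicit formula is just the chain-rule expression for $j_\varepsilon'(\varphi)$ with the linearized state $u^*=S'(\varphi)h$ eliminated in favor of the adjoint state $p$.

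First I would exploit convexity of $\Phi_{\textup{ad}}$. For an arbitrary $\tilde{\varphi} \in \Phi_{\textup{ad}}$ and $s \in [0,1]$, the convex combination $\varphi_s := (1-s)\varphi + s\tilde{\varphi}$ again belongs to $\Phi_{\textup{ad}}$, so that $h := \tilde{\varphi}-\varphi \in H^1(\Omega)\cap L^\infty(\Omega)$ is an admissible (feasible) direction pointing into the set. Since $\varphi$ solves \eqref{reduced_problem}, we have $j_\varepsilon(\varphi_s) \geqslant j_\varepsilon(\varphi)$ for all such $s$; dividing by $s>0$ and sending $s \to 0^+$, the Fr\'echet differentiability of $j_\varepsilon$ established above identifies the limit of the difference quotient with $j_\varepsilon'(\varphi)(\tilde{\varphi}-\varphi)$, yielding $j_\varepsilon'(\varphi)(\tilde{\varphi}-\varphi) \geqslant 0$.

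It remains to make the derivative explicit, which is where the adjoint problem pays off. By the chain rule proven in the differentiability proposition, with $u^*=S'(\varphi)h$,
\[
j_\varepsilon'(\varphi)h = J^\varepsilon_u(u,\varphi)u^* + J^\varepsilon_\varphi(u,\varphi)h,
\]
and the tracking term $J^\varepsilon_u(u,\varphi)u^* = \int_0^T\int_D (u-u_d)u^* \dxs$ still depends on the sensitivity $u^*$. Invoking the preceding proposition—namely the identity \eqref{J_u_diff}, obtained by testing the adjoint equation \eqref{adjoint} with $u^*$ and then using the weak form \eqref{diff_state} of $u^*$—we replace this term by
\[
J^\varepsilon_u(u,\varphi)u^* = -\int_0^T\intO (2c(\varphi)c'(\varphi)h \nabla u + b'(\varphi)h \nabla u_t) \cdot \nabla p \dxs + \int_0^T\intO 2k'(\varphi)h u_t^2 p\dxs.
\]
Substituting into the chain-rule expression and setting $h = \tilde{\varphi}-\varphi$ reproduces verbatim the claimed formula for $j_\varepsilon'(\varphi)(\tilde{\varphi}-\varphi)$, so combining with the sign established above completes the argument.

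The step requiring most care is the passage from the minimality inequality to the sign of the one-sided derivative: one must ensure $j_\varepsilon$ is genuinely differentiable along the feasible direction $\tilde{\varphi}-\varphi$. The only delicate contribution is the double-obstacle potential $\Psi$, which is non-smooth; however, this is precisely absorbed by the pointwise constraint $0\leqslant\varphi\leqslant 1$ encoded in $\Phi_{\textup{ad}}$. On the admissible set $\Psi$ coincides with the smooth $\Psi_0$, so the Ginzburg--Landau contribution $E^\varepsilon_\varphi(\varphi)h = \varepsilon\intO \nabla\varphi\cdot\nabla h\dx + \tfrac{1}{\varepsilon}\intO \Psi_0'(\varphi)h\dx$ is well-defined and continuous as already shown, and the remaining terms are smooth in $(u,\varphi)$. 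With differentiability in this feasible sense secured, both the variational inequality and the explicit representation follow by merely collecting the identities above.
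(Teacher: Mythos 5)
Your proposal is correct and matches the argument the paper intends: the paper states this proposition without an explicit proof, treating it as an immediate consequence of the convexity of $\Phi_{\textup{ad}}$, the Fr\'echet differentiability of $j_\varepsilon$ (chain rule with $u^*=S'(\varphi)h$), and the adjoint representation \eqref{J_u_diff} — exactly the three ingredients you assemble. Your additional remark that the double-obstacle potential $\Psi$ coincides with the smooth $\Psi_0$ on the admissible set, so the directional derivative along $\tilde{\varphi}-\varphi$ is well defined, is a correct and worthwhile clarification of a point the paper leaves implicit.
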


\noindent By collecting previous results, we arrive at the optimality system. 
\begin{theorem}[Optimality system]
Let $\varphi \in \Phi_{\textup{ad}}$ be the minimizer of the optimal control problem \eqref{objective}--\eqref{state_constraint_strong} and $u=S(\varphi)$ and $p$ the associated state and adjoint variables, respectively. Then the functions $(u, \varphi, p) \in U \times \Phi_{\textup{ad}} \times H^1(0,T; H^1(\Omega))$ satisfy the following optimality system in the weak sense: \\
\noindent {The state problem}
\begin{equation} 
\begin{aligned} 
 \begin{cases} 
\alpha(x,t) u_{tt}-\textup{div}(c^2(\varphi) \nabla u)- \textup{div}(b(\varphi) \nabla u_t)= 2k(\varphi)u_t^2\ \ \text{in} \ \ \Omega \times (0,T), \\[2mm]
c^2(\varphi)\dfrac{\partial u}{\partial n}+b(\varphi) \dfrac{\partial u_t}{\partial n}=g\ \ \text{on} \ \ \Gamma,\\[2mm]
(u, u_t)\vert_{t=0}=(u_0, u_1).
\end{cases}
\end{aligned}
\end{equation}
\noindent {The adjoint problem}
\begin{equation} 
\begin{aligned} 
\begin{cases} 
\alpha(x,t) p_{tt}-\textup{div}(c^2(\varphi) \nabla p)+ \textup{div}(b(\varphi) \nabla p_t)\\[2mm]= -(4k(\varphi)u_{tt}+\alpha_{tt})p-2(\alpha_{t}+2k(\varphi)u_t)p_t)+(u-u_{\textup{d}})\chi_D\ \ \text{in} \ \ \Omega \times (0,T), \\[2mm]
c^2(\varphi)\dfrac{\partial p}{\partial n}-b(\varphi) \dfrac{\partial p_t}{\partial n}=0 \ \ \text{on} \ \ \Gamma,\\[2mm]
(p, p_t)\vert_{t=T}=(0, 0).
\end{cases}
\end{aligned}
\end{equation}
\noindent {The gradient inequality}
\begin{equation}
\begin{aligned}
\,\begin{multlined}[t] \gamma \varepsilon \int_{\Omega} \nabla \varphi \cdot \nabla (\tilde{\varphi}-\varphi) \dx +\dfrac{\gamma}{\varepsilon}\int_{\Omega} \Psi'(\varphi)(\tilde{\varphi}-\varphi) \dx\\-\int_0^T\intO (2c(\varphi)c'(\varphi)(\tilde{\varphi}-\varphi)  \nabla u(t)+b'(\varphi)(\tilde{\varphi}-\varphi)  \nabla u_t(t)) \cdot \nabla p \dxs\\+\int_0^T\intO 2k'(\varphi)(\tilde{\varphi}-\varphi)  u_t^2(t) p\dxs \geqslant 0 \qquad \forall \tilde{\varphi} \in \Phi_{\textup{ad}}.
\end{multlined}
\end{aligned}
\end{equation}
\end{theorem}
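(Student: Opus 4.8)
The plan is to assemble the optimality system from results already in hand, the only genuinely new ingredient being the variational inequality expressing first-order optimality over the convex set $\Phi_{\textup{ad}}$.

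First I would record that the state equations hold by construction. Since $\varphi$ is the minimizer and $u=S(\varphi)$, the pair $(u,\varphi)$ is feasible, so $u$ is the weak solution furnished by Theorem~\ref{Thm:Wellposedness}; the strong system quoted in the statement is simply the classical form whose weak formulation is \eqref{weakformmainprob}. Likewise, the adjoint equations hold because $p$ is \emph{defined} as the weak solution of \eqref{adjoint}, whose existence and uniqueness in $H^1(0,T;H^1(\Omega))$ is guaranteed by Proposition~\ref{Prop:AdjointWellP}; integrating by parts in space in \eqref{adjoint} identifies the natural boundary condition $c^2(\varphi)\partial_n p-b(\varphi)\partial_n p_t=0$ and the strong interior equation displayed in the theorem.

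The substantive step is the gradient inequality. Because $\Phi_{\textup{ad}}$ is convex, for every $\tilde\varphi\in\Phi_{\textup{ad}}$ and $t\in[0,1]$ the combination $\varphi+t(\tilde\varphi-\varphi)=(1-t)\varphi+t\tilde\varphi$ lies in $\Phi_{\textup{ad}}$, so minimality gives $j_\varepsilon(\varphi+t(\tilde\varphi-\varphi))\geqslant j_\varepsilon(\varphi)$. Dividing by $t>0$ and letting $t\downarrow 0$, the Fr\'echet differentiability of $j_\varepsilon$ established earlier yields $j_\varepsilon'(\varphi)(\tilde\varphi-\varphi)\geqslant 0$. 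I would then insert the representation of the directional derivative proved in the preceding proposition, in which the control-to-state sensitivity $u^*=S'(\varphi)(\tilde\varphi-\varphi)$ from \eqref{diff_state} has already been eliminated in favour of the adjoint state via the testing-and-integration-by-parts identity \eqref{J_u_diff}, and substitute the explicit Ginzburg--Landau form of $J^\varepsilon_\varphi$, namely $\gamma\varepsilon\int_\Omega\nabla\varphi\cdot\nabla(\tilde\varphi-\varphi)\dx+\tfrac{\gamma}{\varepsilon}\int_\Omega\Psi'(\varphi)(\tilde\varphi-\varphi)\dx$. This reproduces exactly the stated inequality.

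The main obstacle is not in the present assembly but was already dispatched in the preparatory results: deriving the adjoint representation \eqref{J_u_diff} requires integrating by parts in time in the weak form of \eqref{diff_state} tested against $p$ and in \eqref{adjoint} tested against $u^*$, which is legitimate precisely because $p\in H^1(0,T;H^1(\Omega))$ and $u^*\in X$, and because the terminal data $(p,p_t)\vert_{t=T}=(0,0)$ together with the zero initial data of $u^*$ annihilate all boundary-in-time contributions. Within the theorem itself, the only point requiring care is the correct matching of the weak formulations \eqref{weakformmainprob} and \eqref{adjoint} with the strong PDE forms, including the sign of the damping term and the homogeneous Neumann-type conditions; this is a direct integration-by-parts computation.
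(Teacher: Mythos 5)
Your proposal is correct and follows essentially the same route as the paper: the theorem is obtained by collecting the previous results, with the state problem holding by feasibility, the adjoint defined via Proposition~\ref{Prop:AdjointWellP}, and the gradient inequality coming from convexity of $\Phi_{\textup{ad}}$, Fr\'echet differentiability of $j_\varepsilon$, and the adjoint representation \eqref{J_u_diff} of the directional derivative. The convexity argument you spell out for $j_\varepsilon'(\varphi)(\tilde{\varphi}-\varphi)\geqslant 0$ is exactly the standard step the paper leaves implicit in the proposition preceding the theorem.
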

\section{Sharp interface limit} \label{Sec:SharpInterface}
In this last section, we discuss the sharp interface limit of $\{j_{\varepsilon}\}_{\varepsilon>0}$. We need the following preparatory result.
\begin{proposition} \label{Prop:ContL1}
Under the assumptions of Theorem \ref{Thm:Wellposedness} and Proposition~\ref{Prop:AdjointWellP}, the mapping
\[
 \varphi \mapsto \frac12 \int_0^T \int_{D} (S(\varphi)-u_d)^2 \dxt
\]
is continuous in $L^1(\Omega)$.	
\end{proposition}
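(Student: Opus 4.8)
The plan is to argue by sequential continuity combined with a subsequence–extraction principle, reducing the claim to a compactness argument for the states rather than to the quantitative stability estimate of Proposition~\ref{Propstability}. The latter is unavailable here, since $L^1$-convergence of the phase fields does not control their $L^\infty$ norm; overcoming this is precisely the crux of the proof. Write $J_0(u)=\tfrac12\int_0^T\int_D (u-u_d)^2\dxt$ and fix a sequence $\varphi_n\to\varphi$ in $L^1(\Omega)$; the goal is $J_0(S(\varphi_n))\to J_0(S(\varphi))$. It suffices to show that every subsequence admits a further subsequence along which this convergence holds, whence the full sequence converges to the same limit.

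Passing to a subsequence (not relabelled), I would first use $\varphi_n\to\varphi$ in $L^1(\Omega)$ to extract $\varphi_n\to\varphi$ a.e.\ in $\Omega$. Since the extended coefficients in \eqref{interpolated_coefficients_extended} are bounded and continuous in $\varphi$, dominated convergence yields $c^2(\varphi_n)\to c^2(\varphi)$, $b(\varphi_n)\to b(\varphi)$, and $k(\varphi_n)\to k(\varphi)$ a.e.\ and in every $L^p(\Omega)$, $p<\infty$. Next, because the energy bound \eqref{Energy_bound_nl} has constants independent of the phase field, the states $u_n:=S(\varphi_n)$ satisfy $\|u_n\|_U\leqslant C$ uniformly in $n$. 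Extracting once more, I obtain $\tilde u\in U$ with $u_n\rightharpoonup \tilde u$ weakly-$\ast$ in $L^\infty(0,T;H^1(\Omega))$, $u_{n,t}\rightharpoonup \tilde u_t$ weakly-$\ast$ in $L^\infty(0,T;H^1(\Omega))$, and $u_{n,tt}\rightharpoonup \tilde u_{tt}$ weakly in $L^2(0,T;L^2(\Omega))$. By the Aubin--Lions--Simon lemma, the bounds on $u_{n,t}$ and $u_{n,tt}$ give $u_{n,t}\to\tilde u_t$ strongly in $C([0,T];L^4(\Omega))$ (using $H^1(\Omega)\hookrightarrow\hookrightarrow L^4(\Omega)$), and similarly $u_n\to\tilde u$ in $C([0,T];L^4(\Omega))$; in particular $u_{n,t}^2\to\tilde u_t^2$ in $C([0,T];L^2(\Omega))$.

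With these convergences in hand, I would pass to the limit in the weak formulation \eqref{weakformmainprob} for $u_n$, exactly as in the existence-of-minimizer argument of Section~\ref{Sec:ExistenceMinimizer}. For the principal part one splits, for a test function $v\in H^1(\Omega)$,
\[
\intO c^2(\varphi_n)\nabla u_n\cdot\nabla v = \intO\big(c^2(\varphi_n)-c^2(\varphi)\big)\nabla u_n\cdot\nabla v + \intO c^2(\varphi)\nabla u_n\cdot\nabla v,
\]
where the first summand vanishes because $(c^2(\varphi_n)-c^2(\varphi))\nabla v\to 0$ strongly in $L^2$ while $\nabla u_n$ stays bounded, and the second passes to the limit by the weak convergence of $\nabla u_n$; the $b(\varphi_n)\nabla u_{n,t}$ term is treated identically, and the $\alpha u_{n,tt}$ term is immediate since $\alpha$ is independent of $\varphi$. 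The nonlinearity $2k(\varphi_n)u_{n,t}^2$ is handled by combining the $L^p$-convergence of $k(\varphi_n)$ with the strong convergence $u_{n,t}^2\to\tilde u_t^2$, as in Section~\ref{Sec:ExistenceMinimizer}. This identifies $\tilde u$ as a weak solution of the state problem associated with $\varphi$, and the uniqueness part of Theorem~\ref{Thm:Wellposedness} forces $\tilde u=S(\varphi)$.

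Finally, since the limit $S(\varphi)$ is independent of the chosen subsequences, the strong convergence $u_n\to S(\varphi)$ in $L^2(0,T;L^2(\Omega))$ holds along the extracted subsequence; restricting to $D$ and expanding the square gives $J_0(u_n)\to J_0(S(\varphi))$. As this limit is the same for every subsequence, the subsequence principle yields $J_0(S(\varphi_n))\to J_0(S(\varphi))$ for the original sequence, which is the claimed continuity in $L^1(\Omega)$. The main obstacle, as noted, is the passage to the limit in the coefficient and nonlinear terms under merely $L^1$ (hence only a.e.\ along a subsequence) convergence of $\varphi_n$: this is why one cannot invoke Proposition~\ref{Propstability} directly and must instead combine the uniform energy bound of Theorem~\ref{Thm:Wellposedness}, Aubin--Lions compactness, and uniqueness of the state.
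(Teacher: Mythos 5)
Your proof is correct and follows essentially the same route as the paper's: uniform $U$-bounds on the states $u_n=S(\varphi_n)$ from the energy estimate, a.e.\ convergence of the (extended, bounded) coefficients from $L^1$-convergence of the phase fields, passage to the limit in the weak formulation via strong-times-weak pairings together with Aubin--Lions compactness for the quadratic nonlinearity, identification of the limit through uniqueness in Theorem~\ref{Thm:Wellposedness}, and finally strong $L^2(0,T;L^2(\Omega))$ convergence to conclude continuity of the tracking term. The only cosmetic differences are that you make the subsequence principle explicit (which the paper leaves implicit) and use Simon's $C([0,T];L^4(\Omega))$ compactness where the paper settles for strong convergence in $L^2(0,T;L^4(\Omega))$; both variants are equally valid.
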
 
\begin{proof}
 Let $\{\varphi_n\}_{n \in \mathbb{N}}$ be a sequence in $L^1(\Omega)$, such that
 \[\lim_{n \rightarrow \infty} \|\varphi-\varphi_n\|_{L^1(\Omega)}=0.\]
We can extract a subsequence, not relabeled, which converges a.e.\ to $\varphi$ in $\Omega \times (0,T)$. We define $u_n:=S(\varphi_n)$ and note that it satisfies 
	\begin{equation}\label{weakformn}
\begin{aligned}
\begin{multlined}[t]
\intO	\alpha(t) u_{n, tt}(t) v \dx+\intO (c^2(\varphi_n) \nabla u_n(t)+b(\varphi_n) \nabla u_{n, t}(t)) \cdot \nabla v \dx \\
= \intO 2k(\varphi_n) u_{n,t}^2(t) v\dx+ \int_{\Gamma} g(t) v \dx \quad \text{a.e.\ in } (0,T),
\end{multlined}
\end{aligned}
\end{equation}
for all $v \in H^1(\Omega)$, with $(u_n, u_{n, t})\vert_{t=0}=(u_0, u_1)$. By Theorem~\ref{Thm:Wellposedness}, we know that there exists $C>0$, independent of $n$, such that
\[
\|u_{n}\|_{U} \leqslant C.
\]
Thus we can find a subsequence, which we do not relabel, such that $\{u_n\}_{n \in \N}$ converges weakly in $H^1(0,T; H^1(\Omega)) \cap H^2(0,T; L^2(\Omega)).$\\
\indent We next show that the sub-sequential limit of $\{u_n\}_{n \in \N}$ can be identified with $S(\varphi)$. In that direction, we investigate the convergence of each summand of \eqref{weakformn}. We will only detail the arguments for the second summand of \eqref{weakformn} and for the rest we will briefly comment since they are relatively straightforward to deal with.\\
\indent Given $v \in H^1(\Omega)$, we have the uniform estimate
 \begin{equation}
 \begin{array}{l}

\displaystyle|c^2(\varphi_n(x)) \nabla v(x)| \leqslant C c^2_f|\nabla v(x)| \quad \text{a.e.\ in }\ \Omega \times (0,T)

\end{array}
\end{equation}
and thus by Lebesgue's dominated convergence theorem $\{c^2(\varphi_n)\nabla v\}_{n \in \N}$ converges strongly in $L^2(\Omega)^d$ to $c^2(\varphi) \nabla v$. Thus, since $\{\nabla u_n\}_{n \in \N}$ converges weakly in $L^2(\Omega)^d$, we obtain
\begin{equation}\label{secondtermconv}
\begin{array}{l}
\displaystyle\lim_{n \rightarrow \infty}\left|\intO c^2(\varphi_n) \nabla u_n(t)\cdot \nabla v \dx-\intO c^2(\varphi) \nabla u(t)\cdot \nabla v \dx \right|=0.
\end{array}
\end{equation}
Further since for a.e.\ $t\in[0,T],$ $\{u_{n,tt}\}_{n \in \N}$ and $\{\nabla u_{n,t}\}_{n \in \N}$ weakly converge to $u_{tt}$ and $\nabla u$, respectively, in 
$L^{2}(\Omega)),$ similar arguments (to the ones leading to \eqref{secondtermconv}) can be applied to show that
\begin{equation}\label{firsttermconv}
\begin{array}{l}
\displaystyle\lim_{n \rightarrow \infty}\left|\intO \alpha(t) u_{n, tt}(t) v \dx-\intO \alpha(t)u_{tt} v \dx \right|=0
\end{array}
\end{equation}
and 
\begin{equation}\label{thirdtermconv}
\begin{array}{l}
\displaystyle\lim_{n \rightarrow \infty}\left|\intO b(\varphi_n) \nabla u_{n, t}(t) \cdot \nabla v \dx-\intO b(\varphi) \nabla u_{t}(t) \cdot \nabla v \right|=0.
\end{array}
\end{equation}
Now let us consider the first term on the right in \eqref{weakformn}. We know that $\{u_{n,t}\}_{n \in \N}$ is uniformly bounded in $L^{2}(0,T;H^{1}(\Omega))\cap H^{1}(0,T;L^{2}(\Omega))$ on account of Theorem~\ref{Thm:Wellposedness}. Hence by Aubin--Lions lemma one can extract a subsequence (not explicitly relabeled) such that $\{u_{n,t}\}_{n \in \N}$ converges strongly to $u_{t}$ in $L^{2}(0,T;L^{4}(\Omega)).$ In turn, this implies that for a.e.\ $t\in[0,T]$, $\{u^{2}_{n,t}\}_{n \in \N}$ converges to $u^{2}_{t}$ strongly in $L^{2}(\Omega).$ Now one can use similar arguments as before to show that 
\begin{equation}\label{fourthtermconv}
\begin{array}{l}
\displaystyle\lim_{n \rightarrow \infty}\left|\intO 2k(\varphi_n) u_{n,t}^2(t) v\dx \dx-\intO 2k(\varphi) u_{t}^2(t) v \dx \right|=0.
\end{array}
\end{equation}
In view of the above convergences we conclude that $(\varphi,u)$ solves the weak formulation \eqref{weakformmainprob} for all $v \in H^1(\Omega)$, with $(u_n, u_{n, t})\vert_{t=0}=(u_0, u_1).$ From the uniqueness of weak solution to the problem \eqref{weakformmainprob}, we conclude that $u=S(\varphi).$\\ 
\indent Next by the compact embedding of the space $W=\{u \in L^2(0,T; H^1(\Omega)):\, u_t \in L^2(0,T; L^2(\Omega)) \}$ into $L^2(0,T; L^2(\Omega))$, weak convergence of $\{u_n\}_{n \in \N}$ to $S(\varphi)$ implies strong convergence in $L^2(0,T; L^2(\Omega))$.\\
\indent Finally, the proof of the proposition follows by the continuity of the cost functional.
\end{proof}
We next discuss the $\Gamma$-convergence of the reduced cost functionals.
\subsection{Sharp interface problem} The sharp interface problem will be given by
\begin{equation} \label{objective_sharp}
\begin{aligned}
\min_{(u, \varphi)} J^0(u, \varphi)= \dfrac12 \int_0^T \int_D (u-u_\textup{d})^2 \, \textup{d}x \textup{d}s+\gamma C_0 P_{\Omega}({\varphi=1}),
\end{aligned}
\end{equation}
where $C_0>0$ is a constant that depends on the choice of the potential, with 
\begin{equation} \label{admissible_sets_sharp}
\varphi \in \Phi^0_{\textup{ad}}=  BV(\Omega, \{0,1\}),\quad u \in U,
\end{equation}
and such that
\begin{equation} \label{state_constraint_strong_sharp}
	\begin{aligned}
\begin{multlined}[t]
\intO	\alpha(t) u_{tt}(t) v \dx+\intO (c^2(\varphi) \nabla u(t)+b(\varphi) \nabla u_t(t)) \cdot \nabla v \dx \\
= \intO 2k(\varphi) u_t^2(t) v\dx+ \int_{\Gamma} g(t) v \dx \quad \text{a.e.\ in } (0,T),
\end{multlined}
\end{aligned}
\end{equation}
for all $v \in H^1(\Omega)$ a.e.\ in time, with $(u, u_t)\vert_{t=0}=(u_0, u_1)$. 
Well-posedness of the state problem \eqref{state_constraint_strong_sharp} for $\varphi \in \Phi_{\textup{ad}}^0$, which corresponds to the sharp interface setting, follows directly from Theorem~\ref{Thm:Wellposedness}. We can thus define the reduced objective functional
\begin{equation}
\begin{aligned}
j_0(\varphi)= \begin{cases}
J^0(S(\varphi), \varphi) \quad & \text{if} \ \varphi \in \Phi_{\textup{ad}}^0, \\
+\infty & \text{otherwise},
\end{cases}
\end{aligned}
\end{equation}
and reformulate the sharp interface problem as
\begin{equation}
\begin{aligned}
\min_{\varphi \in \Phi_{\textup{ad}}^0} j_0(\varphi).
\end{aligned}
\end{equation} 
\begin{theorem}\label{Gammaconv}
Under the assumptions of Theorem \ref{Thm:Wellposedness} and Proposition~\ref{Prop:AdjointWellP}, the functionals $\{j_{\varepsilon}\}_{\varepsilon>0},$ $\Gamma$-converge in $L^{1}(\Omega)$ to $j_{0}$ as $\varepsilon\searrow 0.$ 
\end{theorem}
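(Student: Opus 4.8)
The plan is to split $j_\varepsilon$ into the tracking part and the Ginzburg--Landau part and to exploit the stability of $\Gamma$-convergence under continuous perturbations. Writing
\[
\widetilde{J}(\varphi):=\frac12\int_0^T\int_D (S(\varphi)-u_d)^2\,\textup{d}x\textup{d}s,
\]
we have $j_\varepsilon=\widetilde{J}+\gamma E_\varepsilon$, where $E_\varepsilon$ is extended by $+\infty$ outside $\{\varphi\in H^1(\Omega):0\leqslant\varphi\leqslant1\}$. By Theorem~\ref{Thm:Wellposedness} the operator $S$, and hence $\widetilde{J}$, is defined on all of $L^1(\Omega)$, and Proposition~\ref{Prop:ContL1} states precisely that $\widetilde{J}$ is continuous with respect to $L^1(\Omega)$-convergence. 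Since adding a continuous functional does not alter the $\Gamma$-limit, and $\gamma>0$ is fixed, the theorem reduces to the classical fact that $E_\varepsilon$ $\Gamma$-converges in $L^1(\Omega)$ to $C_0 P_\Omega(\{\varphi=1\})$ on $BV(\Omega,\{0,1\})$ (and to $+\infty$ otherwise), with the constant $C_0$ determined by the double-obstacle potential $\Psi$; see e.g.~\cite{blank2014relating, garcke2016shape}. I will nonetheless verify the two defining inequalities directly.

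For the $\liminf$-inequality, let $\varphi_\varepsilon\to\varphi$ in $L^1(\Omega)$. Proposition~\ref{Prop:ContL1} gives the genuine limit $\widetilde{J}(\varphi_\varepsilon)\to\widetilde{J}(\varphi)$, so this summand does not influence the lower limit. The $\liminf$-part of the Modica--Mortola theorem yields $\liminf_{\varepsilon\to0}E_\varepsilon(\varphi_\varepsilon)\geqslant C_0 P_\Omega(\{\varphi=1\})$ when $\varphi\in BV(\Omega,\{0,1\})$, while otherwise the target value is $+\infty$ and the inequality is vacuous (a bounded $\liminf$ of $E_\varepsilon(\varphi_\varepsilon)$ forces $\varphi\in BV(\Omega,\{0,1\})$). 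Since the lower limit of a sum with one convergent summand splits,
\[
\liminf_{\varepsilon\to0}j_\varepsilon(\varphi_\varepsilon)=\widetilde{J}(\varphi)+\gamma\liminf_{\varepsilon\to0}E_\varepsilon(\varphi_\varepsilon)\geqslant\widetilde{J}(\varphi)+\gamma C_0 P_\Omega(\{\varphi=1\})=j_0(\varphi).
\]

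For the recovery sequence, there is nothing to prove when $\varphi\notin BV(\Omega,\{0,1\})$, since then $j_0(\varphi)=+\infty$. For $\varphi\in BV(\Omega,\{0,1\})$ the standard optimal-profile construction produces $\varphi_\varepsilon\to\varphi$ in $L^1(\Omega)$ with $0\leqslant\varphi_\varepsilon\leqslant1$, $\varphi_\varepsilon\in H^1(\Omega)$, and $\limsup_{\varepsilon\to0}E_\varepsilon(\varphi_\varepsilon)\leqslant C_0 P_\Omega(\{\varphi=1\})$; in particular $\varphi_\varepsilon\in\Phi_{\textup{ad}}$. Invoking Proposition~\ref{Prop:ContL1} along this sequence gives $\widetilde{J}(\varphi_\varepsilon)\to\widetilde{J}(\varphi)$, hence
\[
\limsup_{\varepsilon\to0}j_\varepsilon(\varphi_\varepsilon)\leqslant\widetilde{J}(\varphi)+\gamma\limsup_{\varepsilon\to0}E_\varepsilon(\varphi_\varepsilon)\leqslant\widetilde{J}(\varphi)+\gamma C_0 P_\Omega(\{\varphi=1\})=j_0(\varphi),
\]
which completes the two inequalities and thus the proof.

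The $\Gamma$-convergence of $E_\varepsilon$ being classical, the real content of the statement is the compatibility of the fidelity term with the weak $L^1$-topology underlying $\Gamma$-convergence: the control-to-state map $S$ is quantitatively controlled only in the strong norm of $U$ under smallness of the data, yet here the phase fields converge merely in $L^1(\Omega)$. The main obstacle is precisely this passage to the limit, which is the content of Proposition~\ref{Prop:ContL1} and rests on the uniform $U$-bound from Theorem~\ref{Thm:Wellposedness}, weak compactness, and identification of the weak limit with $S(\varphi)$ via uniqueness of weak solutions. Once that continuity is secured, the perturbation principle closes the argument with no further regularity required of the recovery profile.
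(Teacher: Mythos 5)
Your proposal is correct and takes essentially the same route as the paper: the same decomposition of $j_\varepsilon$ into the Ginzburg--Landau part and the $L^1(\Omega)$-continuous tracking part (via Proposition~\ref{Prop:ContL1}), combined with the classical Modica--Mortola/double-obstacle $\Gamma$-convergence of $E_\varepsilon$ to the weighted perimeter. The only cosmetic difference is that the paper concludes by citing the stability of $\Gamma$-limits under continuous perturbations (Proposition 20 of \cite{dal2012introduction}), whereas you verify the $\liminf$ and recovery-sequence inequalities directly; the substance is identical.
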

\begin{proof}
The Ginzburg--Landau energy 
	\begin{equation} 
	\begin{aligned}
	E_\varepsilon(\varphi)=\begin{cases} \displaystyle \int_{\Omega} \dfrac{\varepsilon}{2}|\nabla \varphi|^2+\dfrac{1}{\varepsilon}\Psi(\varphi) \textup{d}x,\quad \varepsilon>0 \quad  \ &\text{if } \varphi \in H^1(\Omega), \\[2mm]
	+ \infty, \hphantom{} &\text{otherwise}, \end{cases} 
	\end{aligned}
	\end{equation}
is known to $\Gamma$-converge as $\varepsilon \rightarrow 0^+$ in $L^1(\Omega)$ to	a multiple of the perimeter functional
\begin{equation}
\begin{aligned}
E_0: L^1(\Omega) \ni \varphi \mapsto \begin{cases}
C_0 P_{\Omega}({\varphi=1}) \quad &\text{if} \ \varphi \in BV(\Omega, \{0, 1\}),\\
+ \infty, \ &\text{else};
 \end{cases}
\end{aligned}
\end{equation}
cf.~\cite{blowey1991cahn, modica1987gradient}. The constant $C_{0}$ has a different expression compared to the one appearing in ~\cite{blowey1991cahn, modica1987gradient}. This is because in our case we have a different expression for the function $\Psi(\cdot)$ and in the present scenario $C_{0}$ is given by the following expression:
\begin{equation}\label{C0}
\begin{array}{l}
\displaystyle C_{0}=\int_{0}^{1}\sqrt{2\Psi(s)}\ds=\frac{\pi}{8}.
\end{array}
\end{equation} Following~\cite[Theorem 20]{blank2016sharp}, we then write the reduced objective as a sum,
\begin{equation}
\begin{aligned}
j_{\varepsilon}= \gamma E_\varepsilon+\frac12 \int_0^T \int_{D} (S(\varphi)-u_d)^2 \dxt+I_{\Phi_{\textup{ad}}}
\end{aligned}
\end{equation}
with $I_{\Phi_{\textup{ad}}}(\varphi)=0$ if $\varphi \in \Phi_{\textup{ad}}$ and $I_{\Phi_{\textup{ad}}}(\varphi)=+ \infty$ if $\varphi \in L^1(\Omega) \setminus \Phi_{\textup{ad}}$. Using the results of~\cite{modica1987gradient}, we also find that $\gamma E_\varepsilon+I_{\Phi_{\textup{ad}}}$ $\Gamma$-converges to $\gamma E_0+I_{\Phi_{\textup{ad}}}$. \\
\indent On account of Proposition~\ref{Prop:ContL1}, $j_{\varepsilon}$ is a sum of $\gamma E_\varepsilon+I_{\Phi_{\textup{ad}}}$ and a functional that is continuous in $L^1(\Omega)$. Thus by,  e.g.~\cite[Proposition 20]{dal2012introduction}, it follows that $(j_\varepsilon)_{\varepsilon>0}$ $\Gamma$-converges in $L^1(\Omega)$ to $j_0$.
\end{proof}
Relying on the above results, one can use the compactness arguments of~\cite{modica1987gradient} and methods from the theory of $\Gamma$-convergence, see~\cite{blowey1991cahn}, to show the following result.
\begin{corollary}
	Suppose the assumptions of Theorem \ref{Thm:Wellposedness} and Proposition~\ref{Prop:AdjointWellP} hold true. Further, let $\{\varphi_{\varepsilon}\}_{\varepsilon>0}$ be minimizers of $\{j_{\varepsilon}\}_{\varepsilon>0}.$ Then there exists a subsequence (not explicitly relabeled) and an element $\varphi_{0}\in L^{1}(\Omega),$ such that \[\lim\limits_{\varepsilon\searrow 0}\|\varphi_{\varepsilon}-\varphi_{0}\|_{L^{1}(\Omega)}=0.\] Besides, $\varphi_{0}$ is a minimizer of $j_{0}$ and $\lim\limits_{\varepsilon\searrow 0} j_{\varepsilon}(\varphi_{\varepsilon})=j_{0}(\varphi_{0}).$ 
	
\end{corollary}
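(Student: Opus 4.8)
The plan is to combine the $\Gamma$-convergence result of Theorem~\ref{Gammaconv} with an equicoercivity (compactness) property of the family $\{j_\varepsilon\}_{\varepsilon>0}$, so as to invoke the fundamental theorem of $\Gamma$-convergence, which states that if $j_\varepsilon$ $\Gamma$-converges to $j_0$ and the family is equicoercive, then minimizers of $j_\varepsilon$ converge (up to a subsequence) to a minimizer of $j_0$, with convergence of the minimum values. I would structure the argument in three steps: first establish compactness of the sequence of minimizers in $L^1(\Omega)$, then identify the limit as a minimizer of $j_0$ via $\Gamma$-convergence, and finally deduce convergence of the minimal values.

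\textbf{Step 1: Compactness of the minimizers.}
First I would fix any competitor $\tilde\varphi \in \Phi_{\textup{ad}}^0 \subset BV(\Omega,\{0,1\})$, for which the recovery-sequence property of $\Gamma$-convergence furnishes a sequence $\{\tilde\varphi_\varepsilon\}$ with $j_\varepsilon(\tilde\varphi_\varepsilon) \to j_0(\tilde\varphi) < +\infty$. Since $\varphi_\varepsilon$ minimizes $j_\varepsilon$, this yields the uniform bound
\begin{equation}
j_\varepsilon(\varphi_\varepsilon) \leqslant j_\varepsilon(\tilde\varphi_\varepsilon) \leqslant C
\end{equation}
for all small $\varepsilon>0$. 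Because the tracking term is nonnegative and $\varphi_\varepsilon \in \Phi_{\textup{ad}}$ is uniformly bounded in $L^\infty(\Omega)$ (hence in $L^1(\Omega)$), the bound controls $\gamma E_\varepsilon(\varphi_\varepsilon)$, so $\{\varphi_\varepsilon\}$ lies in a set of uniformly bounded Ginzburg--Landau energy. The Modica--Mortola compactness argument (see~\cite{modica1987gradient}) then guarantees that $\{\varphi_\varepsilon\}$ is relatively compact in $L^1(\Omega)$, producing a subsequence (not relabeled) and a limit $\varphi_0 \in L^1(\Omega)$ with $\|\varphi_\varepsilon-\varphi_0\|_{L^1(\Omega)} \to 0$.

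\textbf{Step 2 and Step 3: Limit is a minimizer and convergence of values.}
Next I would apply the standard consequences of $\Gamma$-convergence. By the liminf inequality in Theorem~\ref{Gammaconv}, $j_0(\varphi_0) \leqslant \liminf_{\varepsilon \searrow 0} j_\varepsilon(\varphi_\varepsilon)$. For optimality of $\varphi_0$, take any $\psi \in \Phi_{\textup{ad}}^0$ and a recovery sequence $\{\psi_\varepsilon\}$ with $j_\varepsilon(\psi_\varepsilon) \to j_0(\psi)$; minimality of $\varphi_\varepsilon$ gives $j_\varepsilon(\varphi_\varepsilon) \leqslant j_\varepsilon(\psi_\varepsilon)$, whence
\begin{equation}
j_0(\varphi_0) \leqslant \liminf_{\varepsilon \searrow 0} j_\varepsilon(\varphi_\varepsilon) \leqslant \limsup_{\varepsilon \searrow 0} j_\varepsilon(\varphi_\varepsilon) \leqslant \lim_{\varepsilon \searrow 0} j_\varepsilon(\psi_\varepsilon) = j_0(\psi).
\end{equation}
Since $\psi$ was arbitrary, $\varphi_0$ minimizes $j_0$. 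Choosing $\psi=\varphi_0$ in the chain above forces all inequalities to be equalities, which yields $\lim_{\varepsilon \searrow 0} j_\varepsilon(\varphi_\varepsilon) = j_0(\varphi_0)$, completing the proof.

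\textbf{Main obstacle.}
The delicate point is Step~1, the equicoercivity: I must ensure the uniform energy bound actually delivers $L^1$-compactness for this specific double-obstacle potential $\Psi$. The Modica--Mortola compactness theorem applies once the potential vanishes exactly at the wells $\{0,1\}$ and the admissible functions are constrained to $[0,1]$ via $\Phi_{\textup{ad}}$; since $\Psi_0(\varphi)=\tfrac12\varphi(1-\varphi)$ has precisely these properties and the obstacle confines $\varphi_\varepsilon$ to $[0,1]$, the classical argument carries over. The only subtlety is confirming that the tracking term does not interfere with the coercivity estimate, which is immediate because it is nonnegative and therefore can only help the upper bound on $E_\varepsilon(\varphi_\varepsilon)$.
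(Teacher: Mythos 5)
Your proposal is correct and takes essentially the same route as the paper, which proves the corollary by exactly this combination: the Modica--Mortola compactness argument of \cite{modica1987gradient} applied to the uniformly energy-bounded minimizers (your Step~1), together with the standard liminf/recovery-sequence consequences of the $\Gamma$-convergence established in Theorem~\ref{Gammaconv}, following \cite{blowey1991cahn} (your Steps~2--3). Your attention to the double-obstacle potential in the compactness step is precisely the point the paper delegates to the cited references, so nothing is missing.
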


\section*{Acknowledgments}
S.M. received funding from the Alexander von Humboldt foundation. The support is gratefully acknowledged.

\bibliographystyle{abbrv} 
\bibliography{references}

\begin{thebibliography}{10}

\bibitem{ambrosio1993optimal}
L.~Ambrosio and G.~Buttazzo.
\newblock An optimal design problem with perimeter penalization.
\newblock {\em Calculus of Variations and Partial Differential Equations},
  1(1):55--69, 1993.

\bibitem{belcher2001object}
E.~Belcher, B.~Matsuyama, and G.~Trimble.
\newblock Object identification with acoustic lenses.
\newblock In {\em MTS/IEEE Oceans 2001. An Ocean Odyssey. Conference
  Proceedings (IEEE Cat. No. 01CH37295)}, volume~1, pages 6--11. IEEE, 2001.

\bibitem{belcher1999beamforming}
E.~O. Belcher, D.~C. Lynn, H.~Q. Dinh, and T.~J. Laughlin.
\newblock Beamforming and imaging with acoustic lenses in small, high-frequency
  sonars.
\newblock In {\em Oceans' 99. MTS/IEEE. Riding the Crest into the 21st Century.
  Conference and Exhibition. Conference Proceedings (IEEE Cat. No. 99CH37008)},
  volume~3, pages 1495--1499. IEEE, 1999.

\bibitem{blank2014relating}
L.~Blank, H.~Garcke, M.~H. Farshbaf-Shaker, and V.~Styles.
\newblock Relating phase field and sharp interface approaches to structural
  topology optimization.
\newblock {\em ESAIM: Control, Optimisation and Calculus of Variations},
  20(4):1025--1058, 2014.

\bibitem{blank2016sharp}
L.~Blank, H.~Garcke, C.~Hecht, and C.~Rupprecht.
\newblock Sharp interface limit for a phase field model in structural
  optimization.
\newblock {\em SIAM Journal on Control and Optimization}, 54(3):1558--1584,
  2016.

\bibitem{blowey1991cahn}
J.~F. Blowey and C.~M. Elliott.
\newblock The {C}ahn--{H}illiard gradient theory for phase separation with
  non-smooth free energy {P}art {I}: Mathematical analysis.
\newblock {\em European Journal of Applied Mathematics}, 2(3):233--280, 1991.

\bibitem{clason2009boundary}
C.~Clason, B.~Kaltenbacher, and S.~Veljovi{\'c}.
\newblock Boundary optimal control of the {W}estervelt and the {K}uznetsov
  equations.
\newblock {\em Journal of Mathematical Analysis and Applications},
  356(2):738--751, 2009.

\bibitem{dal2012introduction}
G.~Dal~Maso.
\newblock {\em An introduction to {$\Gamma$}-convergence}, volume~8.
\newblock Springer Science \& Business Media, 2012.

\bibitem{dautray2012mathematical}
R.~Dautray and J.-L. Lions.
\newblock {\em Mathematical analysis and numerical methods for science and
  technology: {V}olume 5 {E}volution {P}roblems {I}}.
\newblock Springer--Verlag, Berlin, 1992.

\bibitem{du2005retrieving}
Q.~Du, C.~Liu, and X.~Wang.
\newblock Retrieving topological information for phase field models.
\newblock {\em SIAM Journal on Applied Mathematics}, 65(6):1913--1932, 2005.

\bibitem{dunn2015springer}
F.~Dunn, W.~Hartmann, D.~Campbell, and N.~H. Fletcher.
\newblock {\em Springer handbook of acoustics}.
\newblock Springer, 2015.

\bibitem{evans1998partial}
L.~C. Evans.
\newblock Partial differential equations.
\newblock {\em Graduate studies in mathematics}, 19(2), 1998.

\bibitem{garcke2016shape}
H.~Garcke and C.~Hecht.
\newblock Shape and topology optimization in {S}tokes flow with a phase field
  approach.
\newblock {\em Applied Mathematics \& Optimization}, 73(1):23--70, 2016.

\bibitem{hamilton1998nonlinear}
M.~F. Hamilton and D.~T. Blackstock.
\newblock {\em Nonlinear acoustics}, volume 237.
\newblock Academic press San Diego, 1998.

\bibitem{kaltenbacher2009global}
B.~Kaltenbacher and I.~Lasiecka.
\newblock Global existence and exponential decay rates for the {W}estervelt
  equation.
\newblock {\em Discrete \& Continuous Dynamical Systems-S}, 2(3):503, 2009.

\bibitem{kaltenbacher2011well}
B.~Kaltenbacher and I.~Lasiecka.
\newblock Well-posedness of the {W}estervelt and the {K}uznetsov equation with
  nonhomogeneous {N}eumann boundary conditions.
\newblock {\em DCDS Supplement, Proceedings of the 8th AIMS Conference},
  2:763--773, 2011.

\bibitem{modica1987gradient}
L.~Modica.
\newblock The gradient theory of phase transitions and the minimal interface
  criterion.
\newblock {\em Archive for Rational Mechanics and Analysis}, 98(2):123--142,
  1987.

\bibitem{mueller2006video}
R.~P. Mueller, R.~S. Brown, H.~Hop, and L.~Moulton.
\newblock Video and acoustic camera techniques for studying fish under ice: a
  review and comparison.
\newblock {\em Reviews in Fish Biology and Fisheries}, 16(2):213--226, 2006.

\bibitem{muhr2017isogeometric}
M.~Muhr, V.~Nikoli{\'c}, B.~Wohlmuth, and L.~Wunderlich.
\newblock Isogeometric shape optimization for nonlinear ultrasound focusing.
\newblock {\em Evolution Equations \& Control Theory}, 8(1):163, 2019.

\bibitem{nikolic2017sensitivity}
V.~Nikoli{\'c} and B.~Kaltenbacher.
\newblock Sensitivity analysis for shape optimization of a focusing acoustic
  lens in lithotripsy.
\newblock {\em Applied Mathematics \& Optimization}, 76(2):261--301, 2017.

\bibitem{petersson1999some}
J.~Petersson.
\newblock Some convergence results in perimeter-controlled topology
  optimization.
\newblock {\em Computer Methods in Applied Mechanics and Engineering},
  171(1-2):123--140, 1999.

\bibitem{renardy2006introduction}
M.~Renardy and R.~C. Rogers.
\newblock {\em An introduction to partial differential equations}, volume~13.
\newblock Springer Science \& Business Media, 2006.

\bibitem{rupprecht}
C.~Rupprecht.
\newblock {\em Projection type methods in {B}anach space with application in
  topology optimization}.
\newblock https://epub.uni-regensburg.de/33715/, dissertation, Univ.
  Regensburg, 2016.

\bibitem{spadoni2010generation}
A.~Spadoni and C.~Daraio.
\newblock Generation and control of sound bullets with a nonlinear acoustic
  lens.
\newblock {\em Proceedings of the National Academy of Sciences},
  107(16):7230--7234, 2010.

\bibitem{tran2017shape}
Q.~D. Tran, G.-W. Jang, H.-S. Kwon, and W.-H. Cho.
\newblock Shape and topology optimization of acoustic lens system using phase
  field method.
\newblock {\em Structural and Multidisciplinary Optimization}, 56(3):713--729,
  2017.

\bibitem{westervelt1963parametric}
P.~J. Westervelt.
\newblock Parametric acoustic array.
\newblock {\em The Journal of the Acoustical Society of America},
  35(4):535--537, 1963.

\bibitem{yoshizawa2009high}
S.~Yoshizawa, T.~Ikeda, A.~Ito, R.~Ota, S.~Takagi, and Y.~Matsumoto.
\newblock High intensity focused ultrasound lithotripsy with cavitating
  microbubbles.
\newblock {\em Medical \& biological engineering \& computing}, 47(8):851--860,
  2009.

\end{thebibliography}
\end{document}